\definecolor{yellow1}{rgb}{1,0.8,0.2}
 \newtheorem{thm}{Theorem}
 \newtheorem{lem}{Lemma}
 \newtheorem{ass}{Assumption}
\newcommand{\define}{:=}
\newcommand{\normm}[1]{{\left\vert\kern-0.25ex\left\vert\kern-0.25ex\left\vert #1
		\right\vert\kern-0.25ex\right\vert\kern-0.25ex\right\vert}}
\begin{document}
\title{Numerical Methods for  Distributed Stochastic Compositional Optimization Problems with Aggregative Structure}

\author{Shengchao Zhao and Yongchao Liu\thanks{ School of Mathematical Sciences, Dalian University of Technology, Dalian 116024, China, e-mail: zhaoshengchao@mail.dlut.edu.cn (Shengchao Zhao), lyc@dlut.edu.cn (Yongchao Liu) }}
\date{}
\maketitle
\noindent{\bf Abstract.} The paper studies the distributed stochastic compositional optimization  problems over networks,  where all the  agents' inner-level function    is the sum of  each agent's private expectation function.
 Focusing  on the aggregative structure of the inner-level function, we employ the
 hybrid variance reduction method to obtain the information on each agent's private expectation function, and apply the dynamic consensus mechanism   to track the information
on each agent's inner-level function. Then by combining with the standard  distributed stochastic gradient descent method,  we  propose a distributed aggregative stochastic compositional gradient descent method.
   When the objective function is smooth, the proposed method achieves the optimal convergence rate $\mathcal{O}\left(K^{-1/2}\right)$.
 We further combine the proposed method with the communication compression and propose    the communication compressed variant  distributed aggregative stochastic compositional gradient descent method. The compressed variant of the proposed method
 maintains the optimal convergence rate $\mathcal{O}\left(K^{-1/2}\right)$. Simulated experiments  on decentralized reinforcement learning verify the effectiveness of the proposed methods.

\noindent\textbf{Key words.}  Distributed  stochastic compositional optimization, aggregative structure, hybrid variance reduction technique, dynamic consensus mechanism, communication compression
\section{Introduction}

Stochastic compositional optimization problem has been widely studied due to its extensively emerging applications in machine learning \cite{Dai2017Learning,Bottou2018opti,Bo2018SBEED,tianyi2021sol}, risk-averse portfolio optimization \cite{Shapiro2009lect} and adaptive simulation \cite{hu2014model}.
In this paper, we consider the distributed  stochastic compositional optimization  problem
\begin{equation}\label{model}
\min_{x\in\mathbb{R}^d} h(x)\define\frac{1}{n}\sum_{j=1}^n f_j\left(\frac{1}{n}\sum_{j=1}^ng_j(x)\right)
\end{equation}
over networks, where     $g_j(x)\define\mathbb{E}\left[G_j(x;\phi_j)\right]$ is the private expectation function and
 $f_j(y)\define \mathbb{E}\left[F_j(y;\zeta_j)\right]$  is the private outer-level function  of agent $j \in[1,2,\cdots,n]$,  $G_j(\cdot;\phi_j):\mathbb{R}^d\rightarrow \mathbb{R}^p$ and $F_j(\cdot;\zeta):\mathbb{R}^p\rightarrow \mathbb{R}$ are measurable functions parameterized by random variables $\phi_j$ and $\zeta_j$ respectively.
Since  the  inner level function $\frac{1}{n}\sum_{j=1}^ng_j(x)$  aggregates  each agent's private function $g_j(\cdot), j=1,\cdots, n$, we call problem (\ref{model})  distributed  stochastic compositional optimization  problem with aggregative structure \cite{Li2022Aggregative} and DSCA for short.


In the past decades,   stochastic gradient decent methods have been well studied for solving stochastic compositional optimization problem, such as, two-timescale scheme method \cite{wang2016acce,wang2017stochastic}, sing-timescale scheme method \cite{ghadimi2019single,tianyi2021sol} and  variance reduction based method \cite{Shen2018Asynchronous,Huo18,Lian2017Finite}.
 More recently, Gao and Huang \cite{gao2021fast} study distributed  stochastic compositional optimization  problem over undirected communication networks, where a gossip-based distributed stochastic  gradient descent method and its  gradient-tracking version are proposed.   Zhao and Liu \cite{Zhao2022compositional} propose a push-pull based  distributed stochastic  gradient descent method for   distributed  stochastic compositional optimization  problem
over directed communication networks. Both of works \cite{gao2021fast,Zhao2022compositional} achieve the   optimal convergence rate $\mathcal{O}\left(K^{-1/2}\right)$.
 The deterministic distributed optimization problems with the   aggregative structure have been studied in \cite{Li2022Aggregative,Sundhar2012Anew}.
 Li et al. \cite{Li2022Aggregative} consider the distributed aggregative optimization problem,
 which allows each agent's
 objective function  to be dependent not only on their own decision
variables, but also on the average of summable functions of decision
variables of all other agents.
 Ram et al. \cite{Sundhar2012Anew} model the regression problem as   the distributed constrained optimization problem,  where all agents cooperatively minimize a nonlinear function of the sum of the individual agent’s objective functions over the constraint set. 
Yang et al. \cite{yang2022bilevel} consider the distributed bilevel stochastic optimization problem, where
each agent's objective depends not only on its own decision variable but also  on the optimal solution of an sum of the  expect-valued functions held privately by all agents.
The authors \cite{yang2022bilevel} propose a gossip-based distributed bilevel learning algorithm that allows
networked agents to solve both the inner and outer optimization problems in a
single timescale and share information via network propagation.

In this paper, we focus on the numerical methods for distributed stochastic compositional optimization problems with aggregative structure (\ref{model}). We    propose a distributed aggregative stochastic compositional gradient descent method (D-ASCGD for short), which combines the distributed stochastic gradient descent method with the hybrid variance reduction method \cite{Ashok2019Momentum} and the  dynamic average consensus mechanism \cite{ZHU2010Discrete,Saber2005Consensus}. Specifically, we first construct the estimators of  the values and the gradients of each agent's private expectation function  with diminishing bias via the hybrid variance reduction technique.
Then  we track the values and the gradients of the inner-level    function   by employing dynamic average consensus mechanism.
 Finally,  combined with the standard distributed stochastic gradient method,  we  arrive at the  D-ASCGD.
 The proposed D-ASCGD achieves the convergence rate $\mathcal{O}\left(K^{-1/2}\right)$, which matches the optimal convergence rate of stochastic gradient descent  method \cite{Ghadimi2013nonconvex}.
We further combine the D-ASCGD with the compress procedure considered in \cite{liu2021linear}, which induces  the communication compressed variant of distributed  stochastic compositional gradient descent method (CD-ASCGD for short).
The  CD-ASCGD compresses the decision variables, the  trackers of the inner-level function value and its corresponding gradient  to provide a communication-efficient implementation. CD-ASCGD also achieves the  optimal convergence rate $\mathcal{O}\left(K^{-1/2}\right)$.

The rest of this paper is organized as follows. Section \ref{sec:D-method} introduces the proposed D-ASCGD and some standard
assumptions on problem (\ref{model}), communication graphs and weighted matrices. Section \ref{sec:D-analysis} focuses on the convergence
analysis of D-ASCGD. Section \ref{sec:CD-method} presents the communication compressed variant  CD-ASCGD and analyzes  its convergence rate.
Preliminary   numerical test is provided in section \ref{sec:num}. 

Throughout this paper, we use the following notation. $\mathbb{R}^d$ denotes  the d-dimension Euclidean space endowed with norm $\|x\|=\sqrt{\langle x,x\rangle}$. $\normm{\cdot}_F$ and $\normm{\cdot}$ denote the Frobenius norm and matrix norm introduced by $\|\cdot\|$ respectively.  $col(x_1,x_2,\cdots,x_n)$ denotes the
column vector by stacking up vectors $x_1,x_2,\cdots,x_n$,   $\mathbf{1}:=(1~1\dots1)^\intercal\in \mathbb{R}^{n}$ and $\mathbf{0}\define(0~0\dots0)^\intercal\in \mathbb{R}^{d}$.
$\mathbf{I}_{d}\in\mathbb{R}^{d\times d}$ stands for the identity matrix and
$\mathbf{A}\otimes \mathbf{B}$ denotes the Kronecker product of matrices $\mathbf{A}$ and $\mathbf{B}$. For any positive sequences $\{a_k\}$ and $\{b_k\}$,  $a_k=\mathcal{O}(b_k)$ if there exists $c>0$ such that $a_k\le c b_k$. The communication relationship between agents is characterized by a directed graph $\mathcal{G}=\left(\mathcal{V},\mathcal{E}\right)$, where $\mathcal{V}=\{1,2,...,n\}$ is the node set and $\mathcal{E}\subseteq\mathcal{V}\times\mathcal{V}$ is the edge set. For any $i\in\mathcal{V}$, $P_{\phi_i}$ and $P_{\zeta_i}$ are the distributions of random variables $\phi_i$ and $\zeta_i$ respectively.  For a set  $\mathcal{S}$, $|\mathcal{S}|$ denotes its cardinality.

\section{D-ASCGD method}\label{sec:D-method}

As we discussed in the introduction,  the D-ASCGD  uses hybrid variance reduction technique \cite{Ashok2019Momentum} to estimate  the value and the  gradient of each agent's private expectation function, and  uses
the  dynamic average consensus mechanism \cite{ZHU2010Discrete,Saber2005Consensus} to track the value and the gradient of the inner-level  function, which reads as follows.
\begin{algorithm}[H]
	\caption{$\underline{\text{D}}$istributed $\underline{\text{A}}$ggregative $\underline{\text{S}}$tochastic $\underline{\text{C}}$ompositional $\underline{\text{G}}$radient $\underline{\text{D}}$escent (D-ASCGD):}\label{alg:SIA}
	\begin{algorithmic}[1]
		\REQUIRE initial values $x_{i,1}\in\mathbb{R}^{d}$,  $y_{i,1}=G_{i,1}\in\mathbb{R}^{p}$, $z_{i,1}=\hat{G}_{i,1}\in\mathbb{R}^{d\times p}$; stepsizes $\alpha_k>0$, $\beta_k>0,\gamma_k>0$; nonnegative weight matrix $\mathbf{W}=\{w_{ij}\}_{1\le i,j\le n}\in \mathbb{R}^{n\times n}.$        
		\FOR {$k=1,2,\cdots$}
		\FOR {$i=1,\cdots,n$ in parallel}
		\STATE Draw $\phi_{i,k+1}\stackrel{iid}\sim P_{\phi_i}$, $\zeta_{i,k+1}\stackrel{iid}\sim P_{\zeta_i}$. Compute function values $G_i(x_{i,k};\phi_{i,k+1})$, $G_i(x_{i,k+1};\phi_{i,k+1})$ and gradients $\nabla F_i(y_{i,k};\zeta_{i,k+1})$, $\nabla G_i(x_{i,k};\phi_{i,k+1})$, $\nabla G_i(x_{i,k+1};\phi_{i,k+1}).$
		\STATE\label{gdstep} $x_{i,k+1}=\sum_{j=1}^n w_{ij} x_{j,k}-\alpha_k z_{i,k}\nabla F_i(y_{i,k};\zeta_{i,k+1})$
		\STATE\label{step:G} $G_{i,k+1}=(1-\beta_k)\left(G_{i,k}-G_i(x_{i,k};\phi_{i,k+1})\right)+ G_i(x_{i,k+1};\phi_{i,k+1})$
		\STATE \label{step:G-1}$\hat{G}_{i,k+1}=(1-\gamma_k)\left(\hat{G}_{i,k}-\nabla G_i(x_{i,k};\phi_{i,k+1})\right)+\nabla G_i(x_{i,k+1};\phi_{i,k+1})$
		\STATE\label{step:y}$y_{i,k+1}=\sum_{j=1}^n w_{ij} y_{i,k}+G_{i,k+1}-G_{i,k}$
		\STATE\label{step:z}$z_{i,k+1}=\sum_{j=1}^n w_{ij} z_{j,k}+\hat{G}_{i,k+1}-\hat{G}_{i,k}$
		\ENDFOR
		\ENDFOR
	\end{algorithmic}
\end{algorithm}

In Algorithm \ref{alg:SIA}, the key issue is  to estimate the stochastic gradient $$\frac{1}{n}\sum_{j=1}^n \nabla g_j(x)\nabla F_i\left(\frac{1}{n}\sum_{j=1}^n g_j(x);\zeta_{i,k+1}\right)$$
for each agent $j$. We employ the   hybrid variance reduction technique \cite{Ashok2019Momentum} to estimate local function value  $g_i(x)$ and  gradient $\nabla g_i(x)$ in Lines \ref{step:G} and \ref{step:G-1}. Taking $G_{i,k+1}$ as an example,  we have
\begin{align*}
  G_{i,k+1}=(1-\beta_k)\left( {G_{i,k}+ G_i(x_{i,k+1};\phi_{i,k+1})-G_i(x_{i,k};\phi_{i,k+1})} \right)+\beta_k  {G_i(x_{i,k+1};\phi_{i,k+1})}
\end{align*}
where the  first term plays the role of variance reduction and the second term  is the stochastic function value.  The  convex combination of the two terms may reduce the variance of    $G_{i,k+1}$ estimating $g(x)$  gradually.
In Lines \ref{step:y} and \ref{step:z}, we utilize the dynamic average consensus mechanism \cite{ZHU2010Discrete,Saber2005Consensus} to design the trackers of the  inner level function.  As a result of $\mathbf{W}$ being doubly stochastic, we have
\begin{equation*}
\frac{1}{n}\sum_{j=1}^n y_{j,k}=\frac{1}{n}\sum_{j=1}^n G_{j,k},\quad \frac{1}{n}\sum_{j=1}^n z_{j,k}=\frac{1}{n}\sum_{j=1}^n \hat{G}_{j,k}.
\end{equation*}
Then if  $G_{i,k}$ and $\hat{G}_{i,k}$ converge to $g_i(x)$ and $\nabla g_i(x)$ respectively,
$y_{i,k}$ and  $z_{i,k}$ could track   $\frac{1}{n}\sum_{j=1}^n g_j(x)$ and  $\frac{1}{n}\sum_{j=1}^n \nabla g_j(x)$ successfully.

 For the sake of notational convenience, we denote
\begin{align*}
&G_{k+1,k+1}=col\left(G_1(x_{1,k+1};\phi_{1,k+1}),G_2(x_{2,k+1};\phi_{2,k+1}),\cdots,G_n(x_{n,k+1};\phi_{n,k+1})\right),\\
&G_{k+1,k}=col\left(G_1(x_{1,k};\phi_{1,k+1}),G_2(x_{2,k};\phi_{2,k+1}),\cdots,G_n(x_{n,k};\phi_{n,k+1})\right),\qquad\qquad\qquad\qquad\quad
\end{align*}
\vspace{-1cm}
\begin{align*}
&\nabla G_{k+1,k+1}=\left(\nabla G_1(x_{1,k+1};\phi_{1,k+1})^\intercal,\nabla G_2(x_{2,k+1};\phi_{2,k+1})^\intercal,\cdots,\nabla G_n(x_{n,k+1};\phi_{n,k+1})^\intercal\right)^\intercal,\\
&\nabla G_{k+1,k}=\left(\nabla G_1(x_{1,k};\phi_{1,k+1})^\intercal,\nabla G_2(x_{2,k};\phi_{2,k+1})^\intercal,\cdots,\nabla G_n(x_{n,k};\phi_{n,k+1})^\intercal\right)^\intercal,\qquad\qquad\quad\\
&\mathbf{G}_k=col\left(G_{1,k},G_{2,k},\cdots,G_{n,k}\right),~\hat{\mathbf{G}}_k=\left(\hat{G}_{1,k}^\intercal,\hat{G}_{2,k}^\intercal,\cdots,\hat{G}_{n,k}^\intercal\right)^\intercal,
\end{align*}
\vspace{-1cm}
\begin{align*}
&\mathbf{U}_{k+1}=col\left(z_{1,k}\nabla F_1(y_{1,k};\zeta_{1,k+1}),~z_{2,k}\nabla F_2(y_{2,k};\zeta_{2,k+1}),\cdots,z_{n,k}\nabla F_n(y_{n,k};\zeta_{n,k+1})\right),\\
&\mathbf{x}_k=col(x_{1,k},x_{2,k},\cdots,x_{n,k}),~\mathbf{y}_k=col(y_{1,k},y_{2,k},\cdots,y_{n,k}),~\mathbf{z}_k=(z_{1,k}^\intercal,z_{2,k}^\intercal,\cdots,z_{n,k}^\intercal)^\intercal,\\
&\mathbf{g}_{k}=col\left(g_1(x_{1,k}),g_2(x_{2,k}),\cdots,g_n(x_{n,k})\right),\\
&\nabla \mathbf{g}_{k}=\left(\nabla g_1(x_{1,k})^\intercal,\nabla g_2(x_{2,k})^\intercal,\cdots,\nabla g_n(x_{n,k})^\intercal\right)^\intercal,\\
&\tilde{\mathbf{W}}\define\mathbf{W}\otimes \mathbf{I}_d,~\bar{y}_k=\frac{1}{n}\sum_{j=1}^n y_{j,k},~\bar{z}_k=\frac{1}{n}\sum_{j=1}^n z_{j,k},~\bar{x}_k=\frac{1}{n}\sum_{j=1}^n x_{j,k}.\qquad\qquad\qquad\qquad\qquad\quad
\end{align*}
Then  steps \ref{gdstep}-\ref{step:z} in Algorithm \ref{alg:SIA} can be rewritten as
\begin{align}
\label{alg:x-1}&\mathbf{x}_{k+1}=\tilde{\mathbf{W}}\mathbf{x}_k-\alpha_k\mathbf{U}_{k+1}\\
\label{alg:G-1}&\mathbf{G}_{k+1}=(1-\beta_k)\left(\mathbf{G}_k-G_{k+1,k}\right)+G_{k+1,k+1},\\
&\hat{\mathbf{G}}_{k+1}=(1-\gamma_k)\left(\hat{\mathbf{G}}_k-\nabla G_{k+1,k}\right)+\nabla G_{k+1,k+1},\notag\\
\label{alg:z-1}&\mathbf{y}_{k+1}=\tilde{\mathbf{W}}\mathbf{y}_k+\mathbf{G}_{k+1}-\mathbf{G}_k,\\
&\mathbf{z}_{k+1}=\tilde{\mathbf{W}}\mathbf{z}_k+\hat{\mathbf{G}}_{k+1}-\hat{\mathbf{G}}_k.\notag
\end{align}

Throughout our analysis in the paper, we make the following
	two assumptions on  objective functions, networks and weight matrices.
\begin{ass}[Objective function]\label{ass-objective} {\rm
		Let $C_g,C_f,V_g,L_g$ and $L_f$  be positive scalars. For $\forall i\in\mathcal{V}$, $\forall x,x^{'}\in\mathbb{R}^d$, $\forall y,y^{'}\in\mathbb{R}^p$,
		\begin{itemize}
			\item[(a)]  functions $G_i(\cdot;\phi_i)$ and $F_i(\cdot;\zeta_i)$ are  smooth, that is,
			\begin{equation*}
		   \normm{\nabla G_i(x;\phi_i)-\nabla G_i(x^{'};\phi_i)}\le L_g\|x-x^{'}\|
			\end{equation*}
			and
			\begin{equation*}
			\|\nabla F_i(y;\zeta_i)-\nabla F_i(y^{'};\zeta_i)\|\le L_f\|y-y^{'}\|    ;
			\end{equation*}
			\item [(b)] 
			$
		    \mathbb{E}\left[G_i(x;\phi_{i})\big|\zeta_i\right]= g_i(x),~ \mathbb{E}\left[\nabla G_i(x;\phi_{i})\big|\zeta_i\right]= \nabla g_i(x)$, $\mathbb{E}\left[\nabla F_i(y;\zeta_{i})\big|\phi_i\right]= \nabla f_i(y);$
			\item [(c)] the   gradients of   $G_i(\cdot;\phi_i)$ and $F_i(\cdot;\zeta_i)$  are bounded in mean square sense, that is,
			\begin{equation*}
			\mathbb{E}\left[\normm{\nabla G_i(x;\phi_i)}^2\big|\zeta_i\right]\le C_g,\quad \mathbb{E}\left[\|\nabla F_i(y;\zeta_i)\|^2\big|\phi_i\right]\le C_f;
			\end{equation*}
			\item [(d)]   $G(x;\phi_i)$ and  $\nabla G(x;\phi_i)$  have bounded variances, $$\mathbb{E}\left[\|G_i(x;\phi_i)-g_i(x)\|^2\big|\zeta_i\right]\le V_g,\quad \mathbb{E}\left[\normm{\nabla G_i(x;\phi_i)-\nabla g_i(x)}^2\big|\zeta_i\right]\le V_g^{'}.$$
	\end{itemize}}
\end{ass}
Assumption \ref{ass-objective} is standard for stochastic compositional optimization problem \cite{tianyi2021sol,wang2017stochastic,gao2021fast}. Conditions (a) and (c) require both the functions and the gradients to be Lipschitz continuity, conditions  (b) and (d) are
analogous to the unbiasedness and bounded variance assumptions for non-compositional stochastic optimization problems.

\begin{ass}[weight matrices and networks]\label{ass:matrix} {\rm
		 The directed graph $\mathcal{G}$ is strongly connected and permits a nonnegative doubly stochastic weight matrix $\mathbf{W}=\{w_{ij}\}_{1\le i,j\le n}\in \mathbb{R}^{n\times n}$, i.e. $\mathbf{W}\mathbf{1}=\mathbf{1}$ and $\mathbf{1}^\intercal\mathbf{W}=\mathbf{1}^\intercal$.}
\end{ass}
Assumption \ref{ass:matrix}  implies  that $\left(\frac{1}{n}\mathbf{1}\mathbf{1}^\intercal\right)\mathbf{W}=\mathbf{W}\left(\frac{1}{n}\mathbf{1}\mathbf{1}^\intercal\right)=\frac{1}{n}\mathbf{1}\mathbf{1}^\intercal$ and the spectral norm $\rho\define \normm{\mathbf{W}-\frac{\mathbf{1}\mathbf{1}^\intercal}{n}}$ satisfies $\rho<1$ \cite[Lemma 4]{Li2022Aggregative}.

\section{Convergence analysis for D-ASCGD}\label{sec:D-analysis}

In this section, we study the convergence of D-ASCGD.
Before the presentation of the convergence rate of D-ASCGD in Theorem 1,
we first quantify the estimating errors   for  the value  and gradient of each agent's private expected function.    Throughout the paper,   the proof of all the lemmas are relegated to Appendix \ref{apd:D-NSCGD}.


\begin{lem}\label{lem:tracking}
	Under Assumptions \ref{ass-objective} and \ref{ass:matrix},
	\begin{align}
	\mathbb{E}\left[\|\mathbf{G}_{k+1}- \mathbf{g}_{k+1}\|^2\right]
	&\le(1-\beta_k)^2\mathbb{E}\left[\|\mathbf{G}_k-\mathbf{g}_k\|^2\right]+48(1-\beta_k)^2C_g\mathbb{E}\left[ \left\|\mathbf{x}_k-\mathbf{1}\otimes \bar{x}_k\right\|^2\right]\notag\\
	\label{GG-bound}&\quad+12(1-\beta_k)^2C_g\alpha_k^2C_f\mathbb{E}\left[\normm{\mathbf{z}_k}_F^2\right]+3\beta_k^2V_g
	\end{align}
	and
	\begin{align}
	\mathbb{E}\left[\normm{\hat{\mathbf{G}}_{k+1}- \nabla\mathbf{g}_{k+1}}_F^2\right]
	&\le(1-\gamma_k)^2\mathbb{E}\left[\normm{\hat{\mathbf{G}}_k-\nabla\mathbf{g}_k}_F^2\right]+48(1-\gamma_k)^2pL_g^2\mathbb{E}\left[ \left\|\mathbf{x}_k-\mathbf{1}\otimes \bar{x}_k\right\|^2\right]\notag\\
	\label{Gd-bound}&\quad+12(1-\gamma_k)^2pL_g^2\alpha_k^2C_f\mathbb{E}\left[\normm{\mathbf{z}_k}_F^2\right]+3\gamma_k^2V_g^{'}.
	\end{align}
\end{lem}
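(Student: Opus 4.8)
The plan is to derive both inequalities from a single variance-reduction identity, exploiting that the updates for $\mathbf{G}_{k+1}$ in \eqref{alg:G-1} and for $\hat{\mathbf{G}}_{k+1}$ have exactly the same form. I will carry out the argument for the first bound \eqref{GG-bound} in detail; the second bound \eqref{Gd-bound} then follows \emph{mutatis mutandis} by replacing $(\beta_k,G,\mathbf{g},V_g,\|\cdot\|)$ with $(\gamma_k,\nabla G,\nabla\mathbf{g},V_g',\normm{\cdot}_F)$. The only substantive change is that the increments of $\nabla G_i$ are controlled through the smoothness constant $L_g$ of Assumption \ref{ass-objective}(a) instead of the gradient bound $C_g$ of Assumption \ref{ass-objective}(c), and that passing from the operator norm to the Frobenius norm via $\normm{A}_F^2\le p\normm{A}^2$ is what produces the extra factor $p$ in the constants $48(1-\gamma_k)^2 pL_g^2$ and $12(1-\gamma_k)^2 pL_g^2$.

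First I would set up the conditioning carefully, since this is where the structure of the method enters. Let $\mathbb{E}_k[\cdot]$ denote the expectation conditioned on all randomness up to iteration $k$ together with the fresh sample $\zeta_{k+1}$. The key remark is that $\mathbf{x}_{k+1}$ is measurable with respect to this information, because the update \eqref{alg:x-1} uses $\zeta_{k+1}$ but \emph{not} $\phi_{k+1}$. Hence, by the unbiasedness in Assumption \ref{ass-objective}(b) (which integrates out $\phi$ conditionally on $\zeta$), the stochastic evaluations drawn with $\phi_{k+1}$ are conditionally unbiased at both points: $\mathbb{E}_k[G_{k+1,k+1}]=\mathbf{g}_{k+1}$ and $\mathbb{E}_k[G_{k+1,k}]=\mathbf{g}_k$. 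Subtracting $\mathbf{g}_{k+1}$ from \eqref{alg:G-1} and rearranging then gives the decomposition
\[
\mathbf{G}_{k+1}-\mathbf{g}_{k+1}=\underbrace{(1-\beta_k)(\mathbf{G}_k-\mathbf{g}_k)}_{T_1}+\underbrace{(G_{k+1,k+1}-\mathbf{g}_{k+1})-(1-\beta_k)(G_{k+1,k}-\mathbf{g}_k)}_{T_2}.
\]
Since $T_1$ is measurable for $\mathbb{E}_k$ and $\mathbb{E}_k[T_2]=0$ by the previous remark, the cross term vanishes and $\mathbb{E}\|\mathbf{G}_{k+1}-\mathbf{g}_{k+1}\|^2=(1-\beta_k)^2\mathbb{E}\|\mathbf{G}_k-\mathbf{g}_k\|^2+\mathbb{E}\|T_2\|^2$, which already yields the leading term of \eqref{GG-bound}.

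It remains to bound $\mathbb{E}\|T_2\|^2$. I would rewrite $T_2=\beta_k(G_{k+1,k+1}-\mathbf{g}_{k+1})+(1-\beta_k)\big[(G_{k+1,k+1}-G_{k+1,k})-(\mathbf{g}_{k+1}-\mathbf{g}_k)\big]$ and apply Young's inequality $\|a+b\|^2\le 3\|a\|^2+\tfrac32\|b\|^2$ with $a$ the pure-variance piece and $b$ the drift piece; the asymmetric weights $3$ and $\tfrac32$ are precisely what generate the constants $3$, $48=\tfrac32\cdot32$ and $12=\tfrac32\cdot8$. The variance piece is handled directly by Assumption \ref{ass-objective}(d), giving the $3\beta_k^2V_g$ term. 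For the drift piece I would use the integral form of $G_i(x_{i,k+1};\phi)-G_i(x_{i,k};\phi)$ together with the mean-square gradient bound $C_g$ of Assumption \ref{ass-objective}(c), and the fact that $\nabla g_i=\mathbb{E}[\nabla G_i]$ inherits the same bound, to obtain $\mathbb{E}_k\big\|(G_{k+1,k+1}-G_{k+1,k})-(\mathbf{g}_{k+1}-\mathbf{g}_k)\big\|^2\le 4C_g\|\mathbf{x}_{k+1}-\mathbf{x}_k\|^2$.

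Finally I would convert the one-step drift $\|\mathbf{x}_{k+1}-\mathbf{x}_k\|^2$ into the two quantities appearing in \eqref{GG-bound}. Writing $\mathbf{x}_{k+1}-\mathbf{x}_k=(\tilde{\mathbf{W}}-\mathbf{I})(\mathbf{x}_k-\mathbf{1}\otimes\bar{x}_k)-\alpha_k\mathbf{U}_{k+1}$ — where the projection onto the consensus direction uses $\tilde{\mathbf{W}}(\mathbf{1}\otimes\bar{x}_k)=\mathbf{1}\otimes\bar{x}_k$ — and using $\normm{\tilde{\mathbf{W}}-\mathbf{I}}\le2$ (valid since a doubly stochastic $\mathbf{W}$ has operator norm at most $1$), I get $\mathbb{E}\|\mathbf{x}_{k+1}-\mathbf{x}_k\|^2\le 8\,\mathbb{E}\|\mathbf{x}_k-\mathbf{1}\otimes\bar{x}_k\|^2+2\alpha_k^2\,\mathbb{E}\|\mathbf{U}_{k+1}\|^2$, and then $\mathbb{E}\|\mathbf{U}_{k+1}\|^2\le C_f\,\mathbb{E}\normm{\mathbf{z}_k}_F^2$ by Assumption \ref{ass-objective}(c). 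Multiplying through by $\tfrac32(1-\beta_k)^2\cdot 4C_g$ reproduces exactly the constants $48$ and $12$, closing the estimate. I expect the main obstacle to be the measurability bookkeeping of the first step: one must confirm that $\mathbf{x}_{k+1}$ is frozen under $\mathbb{E}_k$ (because the consensus step depends only on $\zeta_{k+1}$) so that the orthogonality of $T_1$ and $T_2$ and the conditional unbiasedness of Assumption \ref{ass-objective}(b) can be invoked simultaneously; once this is in place, the remainder is Young's inequality together with the Lipschitz/integral estimates.
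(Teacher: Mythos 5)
Your proposal is correct and follows essentially the same route as the paper's proof: the same decomposition of $\mathbf{G}_{k+1}-\mathbf{g}_{k+1}$ with the cross term killed by conditioning on $\mathcal{F}_k$ and $\zeta_{1,k+1},\dots,\zeta_{n,k+1}$ (under which $\mathbf{x}_{k+1}$ is indeed measurable), the same bound $\mathbb{E}\left[\|\mathbf{x}_{k+1}-\mathbf{x}_k\|^2\right]\le 2\left(4\mathbb{E}\left[\|\mathbf{x}_k-\mathbf{1}\otimes\bar{x}_k\|^2\right]+\alpha_k^2C_f\mathbb{E}\left[\normm{\mathbf{z}_k}_F^2\right]\right)$, and the same factor $p$ from $\normm{\cdot}_F^2\le p\normm{\cdot}^2$ in the gradient version. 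Your only cosmetic deviation is splitting $T_2$ into two pieces with asymmetric Young weights $(3,\tfrac32)$ instead of the paper's three-term split with uniform weight $3$; both yield the identical intermediate bound $6(1-\beta_k)^2C_g\,\mathbb{E}\left[\|\mathbf{x}_{k+1}-\mathbf{x}_k\|^2\right]+3\beta_k^2V_g$ and hence the same constants $48$ and $12$.
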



The next lemma studies the asymptotic consensus of D-ASCGD.
\begin{lem}\label{lem:consensus}
Suppose (a) Assumptions \ref{ass-objective}-\ref{ass:matrix} hold; (b) $ \{\beta_k\}$ and $\{\gamma_k\}$ are nonincreasing sequences such that $\beta_1\le 1$ and $\gamma_1\le 1$. Then
	\begin{align}
	\mathbb{E}\left[\left\|\mathbf{x}_{k+1}-\mathbf{1}\otimes\bar{x}_{k+1}\right\|^2\right]
	\label{xx-bound}&\le \frac{1+\rho^2}{2}\mathbb{E}\left[\left\|\mathbf{x}_k-\mathbf{1}\otimes\bar{x}_k\right\|^2\right]+\frac{1+\rho^2}{1-\rho^2}\alpha_k^2C_f\mathbb{E}\left[\normm{\mathbf{z}_k}_F^2\right],
	\end{align}
	{\small\begin{align}
		\mathbb{E}\left[\left\|\mathbf{y}_{k+1}-\mathbf{1}\otimes\bar{y}_{k+1}\right\|^2\right]
		&\le \frac{1+\rho^2}{2}\mathbb{E}\left[\left\|\mathbf{y}_k-\mathbf{1}\otimes\bar{y}_k\right\|^2\right]+4\frac{1+\rho^2}{1-\rho^2}\left(\beta_k^2\mathbb{E}\left[\left\|\mathbf{G}_k-\mathbf{g}_k\right\|^2\right]\right.\notag\\
		\label{zz-bound-1}&\quad\left.+2C_g\left(4\mathbb{E}\left[ \left\|\mathbf{x}_k-\mathbf{1}\otimes \bar{x}_k\right\|^2\right]+\alpha_k^2C_f\mathbb{E}\left[\normm{\mathbf{z}_k}_F^2\right]\right)+\beta_k^2V_g\right)
		\end{align}}
	and
	{\small\begin{align}
		\mathbb{E}\left[\normm{\mathbf{z}_{k+1}-\mathbf{1}\otimes\bar{z}_{k+1}}^2\right]
		&\le \frac{1+\rho^2}{2}\mathbb{E}\left[\normm{\mathbf{z}_{k}-\mathbf{1}\otimes\bar{z}_{k}}^2\right]+3\frac{1+\rho^2}{1-\rho^2}\left(\gamma_k^2\mathbb{E}\left[\normm{\hat{\mathbf{G}}_k-\nabla\mathbf{g}_k}^2_F\right]\right.\notag\\
		\label{yy-bound}&\quad\left.+2L_g^2\left(4\mathbb{E}\left[ \left\|\mathbf{x}_k-\mathbf{1}\otimes \bar{x}_k\right\|^2\right]+\alpha_k^2C_f\mathbb{E}\left[\normm{\mathbf{z}_k}_F^2\right]\right)+\gamma_k^2V_g^{'}\right).
		\end{align}}
\end{lem}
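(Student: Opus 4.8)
The plan is to handle all three inequalities with one common template: express each quantity's consensus error as the image under $\mathbf{I}-\tilde{\mathbf{J}}$, where $\tilde{\mathbf{J}}\define\left(\frac{1}{n}\mathbf{1}\mathbf{1}^\intercal\right)\otimes\mathbf{I}$ is the averaging projector (with identity block of the appropriate size), exploit the $\rho$-contraction of $\tilde{\mathbf{W}}-\tilde{\mathbf{J}}$ on the disagreement subspace, and split the resulting perturbation by Young's inequality. For (\ref{xx-bound}) I would start from (\ref{alg:x-1}), apply $\mathbf{I}-\tilde{\mathbf{J}}$, and use $\tilde{\mathbf{J}}\tilde{\mathbf{W}}=\tilde{\mathbf{J}}$ together with $(\tilde{\mathbf{W}}-\tilde{\mathbf{J}})\tilde{\mathbf{J}}=0$ to obtain $\mathbf{x}_{k+1}-\mathbf{1}\otimes\bar{x}_{k+1}=(\tilde{\mathbf{W}}-\tilde{\mathbf{J}})(\mathbf{x}_k-\mathbf{1}\otimes\bar{x}_k)-\alpha_k(\mathbf{I}-\tilde{\mathbf{J}})\mathbf{U}_{k+1}$. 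Taking squared norms and applying $\|a+b\|^2\le(1+\eta)\|a\|^2+(1+\eta^{-1})\|b\|^2$ with $\eta=\frac{1-\rho^2}{2\rho^2}$ produces exactly the coefficients $\frac{1+\rho^2}{2}$ and $\frac{1+\rho^2}{1-\rho^2}$; then $\mathbb{E}[\|\mathbf{U}_{k+1}\|^2]\le C_f\mathbb{E}[\normm{\mathbf{z}_k}_F^2]$ follows from the definition of $\mathbf{U}_{k+1}$, submultiplicativity $\|z_{i,k}\nabla F_i\|\le\normm{z_{i,k}}_F\|\nabla F_i\|$, and Assumption \ref{ass-objective}(c).

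For (\ref{zz-bound-1}) and (\ref{yy-bound}) I would run the identical contraction step on (\ref{alg:z-1}) and its $\mathbf{z}$-analogue, which gives the same $\frac{1+\rho^2}{2}$ geometric factor and, using $\normm{\mathbf{I}-\tilde{\mathbf{J}}}\le1$, leaves the driving terms $\frac{1+\rho^2}{1-\rho^2}\mathbb{E}[\|\mathbf{G}_{k+1}-\mathbf{G}_k\|^2]$ and $\frac{1+\rho^2}{1-\rho^2}\mathbb{E}[\normm{\hat{\mathbf{G}}_{k+1}-\hat{\mathbf{G}}_k}_F^2]$. The heart of the argument is estimating these increments. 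From (\ref{alg:G-1}) I would rewrite $\mathbf{G}_{k+1}-\mathbf{G}_k=(G_{k+1,k+1}-G_{k+1,k})-\beta_k(\mathbf{G}_k-\mathbf{g}_k)-\beta_k(\mathbf{g}_k-G_{k+1,k})$ and apply $\|\sum_{i=1}^3 a_i\|^2\le 3\sum\|a_i\|^2$. The three pieces are controlled separately: the Lipschitz-type difference gives $\mathbb{E}[\|G_{k+1,k+1}-G_{k+1,k}\|^2]\le C_g\,\mathbb{E}[\|\mathbf{x}_{k+1}-\mathbf{x}_k\|^2]$ by Assumption \ref{ass-objective}(c), the estimation-error term is exactly $\beta_k^2\mathbb{E}[\|\mathbf{G}_k-\mathbf{g}_k\|^2]$, and the sampling term obeys $\mathbb{E}[\|\mathbf{g}_k-G_{k+1,k}\|^2]\le V_g$ by Assumption \ref{ass-objective}(d). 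It then remains to expand $\mathbf{x}_{k+1}-\mathbf{x}_k=(\tilde{\mathbf{W}}-\mathbf{I})(\mathbf{x}_k-\mathbf{1}\otimes\bar{x}_k)-\alpha_k\mathbf{U}_{k+1}$, bound $\normm{\tilde{\mathbf{W}}-\mathbf{I}}\le1+\rho\le2$, and reuse $\mathbb{E}[\|\mathbf{U}_{k+1}\|^2]\le C_f\mathbb{E}[\normm{\mathbf{z}_k}_F^2]$ to get $\mathbb{E}[\|\mathbf{x}_{k+1}-\mathbf{x}_k\|^2]\le 8\mathbb{E}[\|\mathbf{x}_k-\mathbf{1}\otimes\bar{x}_k\|^2]+2\alpha_k^2C_f\mathbb{E}[\normm{\mathbf{z}_k}_F^2]$. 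Collecting the pieces reproduces (\ref{zz-bound-1}) (with room to spare in its leading constant $4$), and (\ref{yy-bound}) follows verbatim with $\nabla F$ replaced by $\nabla G$, $C_g$ by $L_g^2$ (now via the gradient-Lipschitz part of Assumption \ref{ass-objective}(a)) and $V_g$ by $V_g^{'}$.

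The point I would be most careful about is the measurability bookkeeping in these two increments: since $x_{i,k+1}$ is built from $\zeta_{i,k+1}$ but not from the fresh $\phi_{i,k+1}$, I would condition first on $\mathcal{F}_k\vee\sigma(\zeta_{k+1})$ so that $x_{i,k+1}$ is frozen, then invoke Assumption \ref{ass-objective}(c)--(d), which are stated conditionally on $\zeta_i$ precisely to license $\mathbb{E}[\|G_i(x_{i,k+1};\phi_{i,k+1})-G_i(x_{i,k};\phi_{i,k+1})\|^2\mid\cdot]\le C_g\|x_{i,k+1}-x_{i,k}\|^2$ and $\mathbb{E}[\|g_i(x_{i,k})-G_i(x_{i,k};\phi_{i,k+1})\|^2\mid\cdot]\le V_g$ despite this cross-dependence. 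I do not expect to need the zero-mean structure of the sampling term; the crude triple split suffices and is exactly what creates the (slightly loose) leading constants $4$ and $3$ in (\ref{zz-bound-1}) and (\ref{yy-bound}), so hypothesis (b), $\beta_k,\gamma_k\in(0,1]$, enters only to keep the convex-combination coefficients admissible rather than in the contraction itself.
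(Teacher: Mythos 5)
Your proposal is correct, and for (\ref{xx-bound}) it coincides with the paper's argument line by line: the same projected recursion $\mathbf{x}_{k+1}-\mathbf{1}\otimes\bar{x}_{k+1}=(\tilde{\mathbf{W}}-\tilde{\mathbf{J}})(\mathbf{x}_k-\mathbf{1}\otimes\bar{x}_k)-\alpha_k(\mathbf{I}-\tilde{\mathbf{J}})\mathbf{U}_{k+1}$, the same Young split with $\eta=\frac{1-\rho^2}{2\rho^2}$ yielding $\frac{1+\rho^2}{2}$ and $\frac{1+\rho^2}{1-\rho^2}$, and the same bound $\mathbb{E}[\|\mathbf{U}_{k+1}\|^2]\le C_f\mathbb{E}[\normm{\mathbf{z}_k}_F^2]$. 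Where you genuinely deviate is in the increment estimate driving (\ref{zz-bound-1}) and (\ref{yy-bound}): the paper uses the four-term split $\mathbf{G}_{k+1}-\mathbf{G}_k=-\beta_k(\mathbf{G}_k-\mathbf{g}_k)+(1-\beta_k)(G_{k+1,k+1}-G_{k+1,k})+\beta_k(G_{k+1,k+1}-\mathbf{g}_{k+1})+\beta_k(\mathbf{g}_{k+1}-\mathbf{g}_k)$, which produces the leading factor $4$ and requires $\beta_k\le 1$ to merge $(1-\beta_k)^2C_g+\beta_k^2C_g\le C_g$ (the paper's parenthetical ``the fact $\beta_k\ge 1$'' there is a typo for $\beta_k\le 1$), and it anchors the variance term at the \emph{new} iterate $x_{i,k+1}$. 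Your three-term identity $\mathbf{G}_{k+1}-\mathbf{G}_k=(G_{k+1,k+1}-G_{k+1,k})-\beta_k(\mathbf{G}_k-\mathbf{g}_k)-\beta_k(\mathbf{g}_k-G_{k+1,k})$ checks out algebraically against (\ref{alg:G-1}) and buys three things: a leading factor $3$ instead of $4$ in (\ref{zz-bound-1}) (so your bound implies the stated one, as you note), hypothesis (b) becoming essentially superfluous for this lemma rather than load-bearing, and a cleaner measurability situation because your variance term evaluates at the $\mathcal{F}_k$-measurable point $x_{i,k}$, so only the Lipschitz increment $G_{k+1,k+1}-G_{k+1,k}$ needs the conditioning on $\mathcal{F}_k\vee\sigma(\zeta_{\cdot,k+1})$ that you correctly flag (the paper's four-term version needs it for the variance term too, and handles it only implicitly through the $\zeta_i$-conditional form of Assumption \ref{ass-objective}). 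Your bound $\mathbb{E}[\|\mathbf{x}_{k+1}-\mathbf{x}_k\|^2]\le 8\mathbb{E}[\|\mathbf{x}_k-\mathbf{1}\otimes\bar{x}_k\|^2]+2\alpha_k^2C_f\mathbb{E}[\normm{\mathbf{z}_k}_F^2]$ matches the paper's (\ref{x-x-bound}) exactly, and your substitution $C_g\to L_g^2$, $V_g\to V_g'$ recovers (\ref{yy-bound}) with precisely the stated constants. One shared convention worth being aware of: like the paper, you bound the stacked sampling error $\mathbb{E}[\|\mathbf{g}_k-G_{k+1,k}\|^2]$ by $V_g$ rather than $nV_g$; strictly, summing Assumption \ref{ass-objective}(d) over agents gives $nV_g$, so both your write-up and the paper absorb the factor $n$ into $V_g$ — harmless for the lemma's role downstream but worth making explicit in a final version.
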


\begin{lem}\label{lem:complex}
Suppose (a) Assumptions \ref{ass-objective}-\ref{ass:matrix} hold; (b) $ \{\beta_k\}$ and $\{\gamma_k\}$ are nonincreasing sequences such that $\beta_1\le 1$ and $\gamma_1\le 1$.
	Denote
	\begin{equation}\label{notation-1-0}
	\begin{aligned}
	 &c_1=\frac{\frac{1-\rho^2}{24}}{32\frac{1+\rho^2}{1-\rho^2}C_g},~c_2=\frac{\frac{1-\rho^2}{24}}{32\frac{1+\rho^2}{1-\rho^2}L_g^2},~c_3=\frac{\frac{1-\rho^2}{24}}{48C_g},~c_4=\frac{\frac{1-\rho^2}{24}}{48pL_g^2},\\
	 &c_5=12C_gC_fc_3+12pL_g^2C_fc_4+8\frac{1+\rho^2}{1-\rho^2}C_gC_fc_1+8\frac{1+\rho^2}{1-\rho^2}L_g^2C_fc_2,\qquad\qquad
	\end{aligned}
	\end{equation}
	\begin{equation}\label{notation-1-1}
	\begin{aligned}
	&a_k=\max\left\{\frac{2+\rho^2}{3},\left(\frac{1+\rho^2}{2}+\frac{3pc_5}{c_2}\alpha_k^2\right),\left((1-\beta_k)^2+6\beta_k^2\right),\right.\\
	&\quad\quad\quad\quad~~~\left.\left((1-\gamma_k)^2+6p\gamma_k^2+\frac{3pc_5}{c_4}\alpha_k^2\right)\right\},\\
	&b_k=3npc_5C_g\alpha_k^2+\left(4\frac{1+\rho^2}{1-\rho^2}V_gc_1+3V_gc_3\right)\beta_k^2+\left(4\frac{1+\rho^2}{1-\rho^2}2V_g^{'}c_2+3V_g^{'}c_4\right)\gamma_k^2,
	\end{aligned}
	\end{equation}
	\begin{equation}\label{notation-1-2}
	\begin{aligned}
	 &V_k=\mathbb{E}\left[\left\|\mathbf{x}_k-\mathbf{1}\otimes\bar{x}_k\right\|^2\right]+c_1\mathbb{E}\left[\left\|\mathbf{y}_k-\mathbf{1}\otimes\bar{y}_k\right\|^2\right]+c_2\mathbb{E}\left[\normm{\mathbf{z}_{k}-\mathbf{1}\otimes\bar{z}_{k}}^2\right]\\
	&\quad+c_3\mathbb{E}\left[\|\mathbf{G}_k-\mathbf{g}_k\|^2\right]+c_4\mathbb{E}\left[\normm{\hat{\mathbf{G}}_k-\nabla\mathbf{g}_k}_F^2\right].\qquad\qquad\qquad\qquad\qquad\qquad
	\end{aligned}
	\end{equation}
	 Then
	\begin{equation}
	\label{V-bound}V_{k+1}\le a_k V_k+b_k.
	\end{equation}	
\end{lem}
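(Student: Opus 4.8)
The plan is to prove \eqref{V-bound} by substituting the five one-step recursions of Lemmas~\ref{lem:tracking} and~\ref{lem:consensus} into the defining identity \eqref{notation-1-2} for $V_{k+1}$, i.e.\ into $V_{k+1}=\mathbb{E}[\|\mathbf{x}_{k+1}-\mathbf{1}\otimes\bar{x}_{k+1}\|^2]+c_1\mathbb{E}[\|\mathbf{y}_{k+1}-\mathbf{1}\otimes\bar{y}_{k+1}\|^2]+c_2\mathbb{E}[\normm{\mathbf{z}_{k+1}-\mathbf{1}\otimes\bar{z}_{k+1}}^2]+c_3\mathbb{E}[\|\mathbf{G}_{k+1}-\mathbf{g}_{k+1}\|^2]+c_4\mathbb{E}[\normm{\hat{\mathbf{G}}_{k+1}-\nabla\mathbf{g}_{k+1}}_F^2]$, and then regrouping the right-hand side by the five error quantities that constitute $V_k$. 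Concretely, I would weight \eqref{GG-bound} by $c_3$, \eqref{Gd-bound} by $c_4$, \eqref{zz-bound-1} by $c_1$, \eqref{yy-bound} by $c_2$, and add \eqref{xx-bound} with weight $1$. Since $\{\beta_k\}$ and $\{\gamma_k\}$ are nonincreasing with $\beta_1,\gamma_1\le1$, the factors $(1-\beta_k)^2,(1-\gamma_k)^2$ may be bounded by $1$ wherever they multiply a cross term, which renders the resulting coefficients independent of $k$ except through $\alpha_k^2,\beta_k^2,\gamma_k^2$.

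The one genuinely non-mechanical step is the elimination of the auxiliary quantity $\mathbb{E}[\normm{\mathbf{z}_k}_F^2]$, which appears (multiplied by $\alpha_k^2C_f$) on the right-hand side of all five recursions but is \emph{not} one of the components of $V_k$. I would split it orthogonally as $\normm{\mathbf{z}_k}_F^2=\normm{\mathbf{z}_k-\mathbf{1}\otimes\bar{z}_k}^2+n\normm{\bar{z}_k}_F^2$, using $\bar{z}_k=\frac1n\sum_{j}\hat{G}_{j,k}$, and then bound the averaged part by $\mathbb{E}[\normm{\bar{z}_k}_F^2]\le 2pC_g+\frac{2}{n}\mathbb{E}[\normm{\hat{\mathbf{G}}_k-\nabla\mathbf{g}_k}_F^2]$, where the first term uses $\normm{\nabla g_j(\cdot)}_F^2\le pC_g$ (Jensen applied to Assumption~\ref{ass-objective}(c), plus the Frobenius/operator-norm comparison, which is the source of the $p$-dependence) and the second is exactly the gradient-tracking error already present in $V_k$. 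This turns every $\alpha_k^2C_f\normm{\mathbf{z}_k}_F^2$ contribution into (i) an $\alpha_k^2$-multiple of the $\mathbf{z}$-consensus error $\mathbb{E}[\normm{\mathbf{z}_k-\mathbf{1}\otimes\bar{z}_k}^2]$, (ii) an $\alpha_k^2$-multiple of the gradient-tracking error $\mathbb{E}[\normm{\hat{\mathbf{G}}_k-\nabla\mathbf{g}_k}_F^2]$, and (iii) a pure constant of order $npC_g\alpha_k^2$; the aggregate coefficient of (i)--(ii) is what the quantity $c_5$ in \eqref{notation-1-0} collects, and the constant becomes the leading term $3npc_5C_g\alpha_k^2$ of $b_k$.

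Finally I would read off, term by term, the coefficient with which each of the five errors of $V_k$ appears in the regrouped bound and check it is dominated by $a_k$ times the weight that error carries in $V_k$. The consensus error $\mathbb{E}[\|\mathbf{x}_k-\mathbf{1}\otimes\bar{x}_k\|^2]$ is the delicate one: it is produced by \emph{every} recursion, and $c_1,c_2,c_3,c_4$ in \eqref{notation-1-0} are calibrated so that each of the four cross contributions to its coefficient is at most $\frac{1-\rho^2}{24}$, whence the total coefficient is $\frac{1+\rho^2}{2}+4\cdot\frac{1-\rho^2}{24}=\frac{2+\rho^2}{3}$, the first entry of the maximum defining $a_k$. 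The remaining entries arise from the diagonal contraction factors together with the off-diagonal identities $4\frac{1+\rho^2}{1-\rho^2}c_1=6c_3$ and $3\frac{1+\rho^2}{1-\rho^2}c_2\le 6pc_4$, which convert the $\beta_k^2$- and $\gamma_k^2$-couplings into the clean terms $(1-\beta_k)^2+6\beta_k^2$ and $(1-\gamma_k)^2+6p\gamma_k^2$, while the $\alpha_k^2$-terms $\frac{3pc_5}{c_2}\alpha_k^2$ and $\frac{3pc_5}{c_4}\alpha_k^2$ absorb the $\mathbf{z}$-error and gradient-tracking contributions from the previous step, the generous factor $3p$ supplying the slack that covers them. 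The main obstacle is exactly this simultaneous bookkeeping: one must check, for all five components at once, that the engineered weights make every aggregated coefficient fall under the appropriate branch of the $\max$, and that the leftover constants (the variance terms $V_g,V_g'$ weighted by $c_1,c_3$ and $c_2,c_4$, together with the $npC_g\alpha_k^2$ constant) assemble into $b_k$ precisely as stated in \eqref{notation-1-1}.
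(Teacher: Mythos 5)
Your proposal is correct and follows essentially the same route as the paper's proof: form the weighted sum of \eqref{xx-bound}, \eqref{zz-bound-1}, \eqref{yy-bound}, \eqref{GG-bound}, \eqref{Gd-bound} with weights $1,c_1,c_2,c_3,c_4$, eliminate $\mathbb{E}\left[\normm{\mathbf{z}_k}_F^2\right]$ by splitting around $\bar{z}_k=\frac{1}{n}\sum_j\hat{G}_{j,k}$ into the $\mathbf{z}$-consensus error, the gradient-tracking error, and an $npC_g$ constant, then verify coefficient-by-coefficient that the totals fall under $a_k$ and $b_k$ (your checks, e.g.\ the four $\frac{1-\rho^2}{24}$ contributions summing with $\frac{1+\rho^2}{2}$ to $\frac{2+\rho^2}{3}$ and the identity $4\frac{1+\rho^2}{1-\rho^2}c_1=6c_3$, match the paper's bookkeeping exactly). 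The only (cosmetic) deviation is your Pythagoras split $\normm{\mathbf{z}_k}_F^2=\normm{\mathbf{z}_k-\mathbf{1}\otimes\bar{z}_k}_F^2+n\normm{\bar{z}_k}_F^2$ with a factor-$2$ bound where the paper uses a three-term Cauchy--Schwarz split with factor $3$; yours is slightly tighter and still fits under the stated $3pc_5$ coefficients.
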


Lemma \ref{lem:complex} is a technical result, which characterizes   a recursive inequality relation of the   estimating errors of   hybrid variance reduction method  and
the  dynamic average consensus method.

 We are ready to  present the convergence rate of D-ASCGD.
\begin{thm}\label{thm:rate} Let   $c_1,\cdots, c_5$ be defined in (\ref{notation-1-0}).
	Suppose that (a) Assumptions \ref{ass-objective}-\ref{ass:matrix} hold, (b) $\alpha_k=\frac{s_1}{\sqrt{K}}$,  $\beta_k=\frac{s_2}{\sqrt{K}}$, $\gamma_k=\frac{s_3}{\sqrt{K}}$, where positive constants $s_1,s_2, s_3$ are small enough such that $\alpha_k<1,\beta_k<1,\gamma_k<1$ and
	\begin{equation}\label{step-condi}
	\frac{s_1^2}{K}< \frac{(1-\rho^2)c_2}{6pc_5},~\frac{s_2}{\sqrt{K}}<\frac{2(1+\rho^2)}{7-5\rho^2},~\left(1+\frac{6pL_g(1-\rho^2)}{1+\rho^2}\right)\frac{s_3}{\sqrt{K}}+\frac{3pc_5}{c_4}\frac{s_1^2}{s_3\sqrt{K}}<2.
	\end{equation}
	 Then   for any $i\in\mathcal{V}$,
	\begin{align}\label{rate up}
	&\frac{1}{K}\sum_{k=1}^K\mathbb{E}\left[\|\nabla h(x_{i,k})\|^2\right]\le \frac{8\mathbb{E}\left[h(\bar{x}_1)\right]-\mathbb{E}\left[h(\bar{x}_{K+1})\right]}{s_1\sqrt{K}}+\frac{24pLC_fC_gs_1}{n\sqrt{K}}+\frac{8c_6(V_1+b)}{K(1-a_K)},
	\end{align}
	where  $a_k$ is defined in (\ref{notation-1-1}), $L=C_gL_f + C_f^{1/2}L_g$ and
	\begin{align}
	 \label{b}&b=3npc_5C_gs_1^2+3\left(\frac{1+\rho^2}{1-\rho^2}V_gc_1+V_gc_3\right)s_2^2+3\left(\frac{1+\rho^2}{1-\rho^2}2V_g^{'}c_2+V_g^{'}c_4\right)s_3^2,\\
	&c_6=\max\left\{2\left(\frac{C_g^2L_f^2}{n}+\frac{C_fL_g^2}{n}+\frac{L^2}{8}\right),~\frac{2 C_gL_f^2}{n\min\{c_1,c_3\}},~\frac{\left(4C_f+3LC_f\right)p}{2\min\{c_2,c_4\}n}\right\}.\notag
	\end{align}
\end{thm}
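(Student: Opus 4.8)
The plan is to marry a one-step descent inequality for the network-averaged iterate $\bar x_k$ with the Lyapunov recursion already supplied by Lemma \ref{lem:complex}. First I would use that $\mathbf{W}$ is doubly stochastic (Assumption \ref{ass:matrix}) to average (\ref{alg:x-1}) into the single-agent recursion $\bar x_{k+1}=\bar x_k-\alpha_k\bar u_k$, where $\bar u_k\define\frac1n\sum_{j=1}^n z_{j,k}\nabla F_j(y_{j,k};\zeta_{j,k+1})$. Next I would verify that $h$ is $L$-smooth with $L=C_gL_f+C_f^{1/2}L_g$: writing $h(x)=\frac1n\sum_j f_j(\frac1n\sum_l g_l(x))$ and differentiating, the two terms of the chain rule are controlled by the $L_g$-Lipschitzness of $\nabla G_i$ together with $\normm{\nabla g_i}\le C_g^{1/2}$ (from Assumption \ref{ass-objective}(c) via Jensen) and the $L_f$-Lipschitzness of $\nabla F_i$ with $\|\nabla f_i\|\le C_f^{1/2}$. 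The descent lemma then reads
\begin{equation*}
\mathbb{E}[h(\bar x_{k+1})]\le\mathbb{E}[h(\bar x_k)]-\alpha_k\mathbb{E}\langle\nabla h(\bar x_k),\bar u_k\rangle+\tfrac{L\alpha_k^2}{2}\mathbb{E}[\|\bar u_k\|^2].
\end{equation*}

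The heart of the proof is the inner-product term. Conditioning on the natural filtration $\mathcal{F}_k$ and using the unbiasedness in Assumption \ref{ass-objective}(b) gives $\mathbb{E}[\bar u_k\mid\mathcal{F}_k]=\frac1n\sum_j z_{j,k}\nabla f_j(y_{j,k})$, a biased surrogate for $\nabla h(\bar x_k)$. I would control the bias $\|\nabla h(\bar x_k)-\frac1n\sum_j z_{j,k}\nabla f_j(y_{j,k})\|^2$ by peeling off three layers: the consensus gaps $z_{j,k}-\bar z_k$, $y_{j,k}-\bar y_k$; the tracking gaps $\bar z_k-\frac1n\sum_l\nabla g_l(\bar x_k)$ and $\bar y_k-\frac1n\sum_l g_l(\bar x_k)$, which by the averaging identities reduce to $\hat{\mathbf{G}}_k-\nabla\mathbf{g}_k$, $\mathbf{G}_k-\mathbf{g}_k$ and the disagreement $\mathbf{x}_k-\mathbf{1}\otimes\bar x_k$; and the smoothness gap $\nabla f_j(y_{j,k})-\nabla f_j(\frac1n\sum_l g_l(\bar x_k))$, absorbed through $L_f$. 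Each piece is one of the five quantities assembled in $V_k$, the factors $C_f,C_g$ supply the multipliers, and the whole point of the constant $c_6$ in (\ref{b}) is to package every resulting coefficient into a single bound (bias)$\,\le c_6V_k$. Combining this with $\langle a,b\rangle\ge\frac12\|a\|^2-\frac12\|a-b\|^2$, and splitting $\bar u_k$ into its conditional mean (absorbed against the $\tfrac{\alpha_k}2\|\nabla h\|^2$ gain using $\alpha_kL$ small) and a martingale part whose summands are independent across agents (variance $\le\frac{6pC_fC_g}{n}$ by Assumption \ref{ass-objective}(c), the $1/n$ being the variance reduction from averaging $n$ samples), the descent inequality becomes
\begin{equation*}
\tfrac{\alpha_k}{4}\mathbb{E}[\|\nabla h(\bar x_k)\|^2]\le\mathbb{E}[h(\bar x_k)]-\mathbb{E}[h(\bar x_{k+1})]+\alpha_kc_6V_k+\tfrac{3pLC_fC_g}{n}\alpha_k^2.
\end{equation*}

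To finish, I would sum over $k=1,\dots,K$ with $\alpha_k=s_1/\sqrt K$: the function-value differences telescope and, after dividing by $\sum_k\alpha_k/4=s_1\sqrt K/4$, yield the first and third terms of (\ref{rate up}) (the leading $8$ appearing once I pass to the per-agent gradient below). For $\sum_k V_k$ I invoke Lemma \ref{lem:complex}: with constant stepsizes $a_k\equiv a_K$, the recursion $V_{k+1}\le a_KV_k+b_k$ telescopes to $\sum_{k=1}^K V_k\le\frac{V_1+b}{1-a_K}$, where $b$ of (\ref{b}) collects the summed $b_k$ and the three inequalities in (\ref{step-condi}) are precisely what force each of the four entries defining $a_k$ in (\ref{notation-1-1}) below one, so that $a_K<1$. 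Finally I pass from $\bar x_k$ to agent $i$ via $\|\nabla h(x_{i,k})\|^2\le2\|\nabla h(\bar x_k)\|^2+2L^2\|x_{i,k}-\bar x_k\|^2$; the factor $2$ turns $4$ into the stated $8$, and the disagreement $\|x_{i,k}-\bar x_k\|^2\le V_k$ merges into the $V_k$-sum (this is exactly the $\frac{L^2}{8}$ entry buried in $c_6$, since $8\cdot2\cdot\frac{L^2}{8}=2L^2$), giving the final term $\frac{8c_6(V_1+b)}{K(1-a_K)}$. I expect the main obstacle to be the second paragraph: tracking the numerous cross terms in the three-layer bias expansion and checking that their coefficients genuinely collapse into the single constant $c_6$ with the stated form, rather than any bookkeeping in the telescoping step.
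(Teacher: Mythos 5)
Your proposal is correct and follows essentially the same route as the paper's proof: an $L$-smooth descent step on $\bar{x}_k$, a decomposition of the gap between $\nabla h(\bar{x}_k)$ and the update direction into consensus, tracking, and smoothness layers (the paper's terms $P_1$ through $P_6$, with your martingale part being its $P_7$), absorption of all error coefficients into $c_6 V_k$, the per-agent conversion via Lipschitzness of $\nabla h$ producing the factor $8$ and the $L^2/8$ entry of $c_6$, and telescoping $V_{k+1}\le a_K V_k + b/K$ from Lemma \ref{lem:complex} under the stepsize conditions (\ref{step-condi}), which indeed force $a_K<1$ exactly as you verify. Your minor variations — bounding $\mathbb{E}[\|\bar u_k\|^2]$ via a conditional-mean/martingale split with cross-agent independence rather than the paper's direct Jensen bound through $C_f\mathbb{E}\left[\normm{\mathbf{z}_k}_F^2\right]$, and using $\langle a,b\rangle\ge\frac{1}{2}\|a\|^2-\frac{1}{2}\|a-b\|^2$ in place of Young's inequality with $\tau=2\alpha_k/3$ — are cosmetic and lead to the same bounds.
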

\begin{proof}
	We first provide an upper bound for $\mathbb{E}\left[\|\nabla h(\bar{x}_k)\|^2\right]$.
	Noting that $\nabla h(x)$ is {\small$L\left(\define C_gL_f + C_f^{1/2}L_g\right)$}-smooth \cite{Junyu2019SC},
	\begin{align*}
	h(\bar{x}_{k+1})&\le h(\bar{x}_k)+\langle \nabla h(\bar{x}_k),\bar{x}_{k+1}-\bar{x}_k\rangle+\frac{L}{2}\|\bar{x}_{k+1}-\bar{x}_k\|^2\notag\\
	&=h(\bar{x}_k)-\left\langle \nabla h(\bar{x}_k),\alpha_k\bar{U}_{k+1}\right\rangle+\frac{L}{2}\left\|\alpha_{k}\bar{U}_{k+1}\right\|^2\notag\\
	&=h(\bar{x}_k)-\alpha_{k}\|\nabla h(\bar{x}_k)\|^2+\frac{L}{2}\left\|\alpha_{k}\bar{U}_{k+1}\right\|^2+\left\langle \nabla h(\bar{x}_k),\alpha_{k}\left(\nabla h(\bar{x}_k)-\bar{U}_{k+1}\right)\right\rangle,
	\end{align*}
	where $\bar{U}_{k+1}=\left(\frac{\mathbf{1}^\intercal}{n}\otimes\mathbf{I}_{d}\right)\mathbf{U}_{k+1}$, the first equality follows from the fact that $\bar{x}_{k+1}=\bar{x}_k-\alpha_k\bar{U}_{k+1}$.
	Take expectation on both sides of above inequality,
	\begin{align}
	\mathbb{E}\left[h(\bar{x}_{k+1})\right]
	&\le \mathbb{E}\left[h(\bar{x}_k)\right]-\alpha_{k}\mathbb{E}\left[\|\nabla h(\bar{x}_k)\|^2\right]+\frac{L}{2}\mathbb{E}\left[\left\|\alpha_{k}\bar{U}_{k+1}\right\|^2\right]\notag\\
	\label{grad-bound-1}&\quad+\mathbb{E}\left[\left\langle \nabla h(\bar{x}_k),\alpha_{k}\left(\nabla h(\bar{x}_k)-\bar{U}_{k+1}\right)\right\rangle\right].
	\end{align}
	
	For the third term on the right hand side of (\ref{grad-bound-1}),
	\begin{align*}
	\frac{L}{2}\mathbb{E}\left[\left\|\alpha_{k}\bar{U}_{k+1}\right\|^2\right]&\le \frac{L\alpha_{k}^2}{2n}\mathbb{E}\left[\left\|\mathbf{U}_{k+1}\right\|^2\right]\le \frac{L\alpha_{k}^2}{2n}C_f\mathbb{E}\left[\normm{\mathbf{z}_{k}}_F^2\right],
	\end{align*}
	where the second inequality follows from the definition of  $\mathbf{U}_{k+1}$ and Assumption \ref{ass-objective}(c). For the term $\mathbb{E}\left[\normm{\mathbf{z}_k}_F^2\right]$,  it is  easy to observe that
	\begin{align*}
	 \mathbb{E}\left[\normm{\mathbf{z}_k}_F^2\right]\notag
	&=\mathbb{E}\left[\normm{\mathbf{z}_k-\mathbf{1}\otimes\bar{z}_k+\mathbf{1}\otimes\bar{z}_k-\mathbf{1}\otimes\left(\frac{1}{n}\sum_{j=1}^n\nabla g_j(x_{j,k})\right)+\mathbf{1}\otimes\left(\frac{1}{n}\sum_{j=1}^n\nabla g_j(x_{j,k})\right)}_F^2\right]\notag\\
	&\le 3\left(\mathbb{E}\left[\normm{\mathbf{z}_k-\mathbf{1}\otimes\bar{z}_k}_F^2\right]+n\mathbb{E}\left[\normm{\bar{z}_k-\frac{1}{n}\sum_{j=1}^n\nabla g_j(x_{j,k})}_F^2\right]+\sum_{j=1}^n\mathbb{E}\left[\normm{\nabla g_j(x_{j,k})}_F^2\right]\right)\notag\\
	&\le 3\left(\mathbb{E}\left[\normm{\mathbf{z}_k-\mathbf{1}\otimes\bar{z}_k}_F^2\right]+\mathbb{E}\left[\normm{\hat{\mathbf{G}}_k-\nabla\mathbf{g}_k}_F^2\right]+npC_g\right)\notag\\
	&\le 3p\left(\mathbb{E}\left[\normm{\mathbf{z}_k-\mathbf{1}\otimes\bar{z}_k}^2\right]+\mathbb{E}\left[\normm{\hat{\mathbf{G}}_k-\nabla\mathbf{g}_k}^2\right]+nC_g\right).
	\end{align*}
    Then
    \begin{align}\label{U-bound}
    \frac{L}{2}\mathbb{E}\left[\left\|\alpha_{k}\bar{U}_{k+1}\right\|^2\right]
    &\le 3p\frac{L\alpha_{k}^2}{2n}C_f\left(\mathbb{E}\left[\normm{\mathbf{z}_k-\mathbf{1}\otimes\bar{z}_k}^2\right]+\mathbb{E}\left[\normm{\hat{\mathbf{G}}_k-\nabla\mathbf{g}_k}^2\right]+nC_g\right).
    \end{align}

	For the fourth term on the right hand side of (\ref{grad-bound-1}),
	{\small\begin{align}
		&\mathbb{E}\left[\left\langle \nabla h(\bar{x}_k),\alpha_{k}\left(\nabla h(\bar{x}_k)-\bar{U}_{k+1}\right)\right\rangle\right]\notag\\
		&=\mathbb{E}\left[\left\langle \alpha_{k}\nabla h(\bar{x}_k),P_1+P_2+P_3+P_4+P_5+P_6+P_7\right\rangle\right]\notag\\
		&\le \frac{\alpha_k^2}{2\tau}\mathbb{E}\left[\|\nabla h(\bar{x}_k)\|^2\right]+3\tau\left(\mathbb{E}\left[\|P_1\|^2\right]+\mathbb{E}\left[\|P_2\|^2\right]+\mathbb{E}\left[\|P_3\|^2\right]+\mathbb{E}\left[\|P_4\|^2\right]+\mathbb{E}\left[\|P_5\|^2\right]+\mathbb{E}\left[\|P_6\|^2\right]\right)\notag\\
		\label{grad-bound-3-0}&\quad+\mathbb{E}\left[\left\langle \nabla h(\bar{x}_k),P_7\right\rangle\right],
		\end{align}}
	where  $\tau$ is any positive scalar,
	{\small\begin{align*}
		&P_1=\nabla h(\bar{x}_k)-\frac{1}{n}\sum_{j=1}^n\left(\frac{1}{n}\sum_{j=1}^n\nabla g_j(\bar{x}_k)\right)\nabla f_j\left(\frac{1}{n}\sum_{j=1}^ng_j(x_{j,k})\right),\\
		&P_2=\frac{1}{n}\sum_{j=1}^n\left(\frac{1}{n}\sum_{j=1}^n\left(\nabla g_j(\bar{x}_k)-\nabla g_j(x_{j,k})\right)\right)\nabla f_j\left(\frac{1}{n}\sum_{j=1}^ng_j(x_{j,k})\right),\qquad\qquad\qquad\qquad\quad
		\end{align*}}
	\vspace{-0.5cm}
	{\small\begin{align*}
		&P_3=\frac{1}{n}\sum_{j=1}^n\left(\frac{1}{n}\sum_{j=1}^n\nabla g_j(x_{j,k})\right)\left(\nabla f_i\left(\frac{1}{n}\sum_{j=1}^ng_j(x_{j,k})\right)-\nabla f_j\left(\frac{1}{n}\sum_{j=1}^nG_{j,k}\right)\right),\\
		&P_4=\frac{1}{n}\sum_{j=1}^n\left(\frac{1}{n}\sum_{j=1}^n\nabla g_j(x_{j,k})\right)\left(\nabla f_j(\bar{y}_k)-\nabla f_j(y_{j,k})\right),\qquad\qquad\qquad\qquad\quad\qquad\qquad\qquad
		\end{align*}}
	\vspace{-0.5cm}
	{\small\begin{align*}
		&P_5=\frac{1}{n}\sum_{j=1}^n\left(\frac{1}{n}\sum_{j=1}^n\nabla g_j(x_{j,k})-\frac{1}{n}\sum_{j=1}^n\hat{G}_{j,k}\right)\nabla f_j(y_{j,k}),~P_6=\frac{1}{n}\sum_{j=1}^n\left(\bar{z}_k-z_{j,k}\right)\nabla f_j(y_{j,k}),\\
		&P_7=\frac{1}{n}\sum_{j=1}^nz_{j,k}\left(\nabla f_j(y_{j,k})-\nabla F_j(y_{j,k};\zeta_{j,k+1})\right),
		\end{align*}}\footnote{By the iterations of $y_{i,k}$, $z_{i,k}$ in Algorithm \ref{alg:SIA} and the definitions of $\bar{y}_k$ and $\bar{z}_k$, we have $\bar{y}_k=\frac{1}{n}\sum_{j=1}^nG_{j,k}$ and $\bar{z}_k=\frac{1}{n}\sum_{j=1}^n\hat{G}_{j,k}$}
	the inequality follows from Cauchy-Schwartz inequality and the fact $ab\le \frac{1}{2\tau}a^2+\frac{\tau}{2}b^2$. Defining 
	\begin{equation*}
	\begin{aligned}
	&\mathcal{F}_1=\sigma\left(x_{i,1}, y_{i,1},z_{i,1},G_{i,1},\hat{G}_{i,1}:i\in\mathcal{V}\right),\\
	&\mathcal{F}_k=\sigma\{x_{i,1},y_{i,1},z_{i,1},G_{i,1},\hat{G}_{i,1}, \phi_{i,t},\zeta_{i,t}:i\in\mathcal{V}, 2\le t\le k\}(k\ge2),
	\end{aligned}
	\end{equation*}
	we have $\mathbb{E}\left[\nabla F_j(y_{j,k};\zeta_{j,k+1})\big|\mathcal{F}_k\right]=\nabla f_j(y_{j,k})$ and then the third term on the right hand side of inequality (\ref{grad-bound-3-0}) is equal to 0. Moreover, by Assumption \ref{ass-objective}(a) and (c),
	{\small\begin{align}
		&\mathbb{E}\left[\left\langle \nabla h(\bar{x}_k),\alpha_{k}\left(\nabla h(\bar{x}_k)-\bar{U}_{k+1}\right)\right\rangle\right]\notag\\
		&\le \frac{\alpha_k^2}{2\tau}\mathbb{E}\left[\|\nabla h(\bar{x}_k)\|^2\right]+3\tau\left(\frac{C_g^2L_f^2}{n}+\frac{C_fL_g^2}{n}\right)\mathbb{E}\left[\|\mathbf{x}_k-\mathbf{1}\otimes\bar{x}_k\|^2\right]+\frac{3\tau C_gL_f^2}{n}\mathbb{E}\left[\|\mathbf{g}_k-\mathbf{G}_k\|^2\right]\notag\\
		\label{grad-bound-3}&+\frac{3\tau C_gL_f^2}{n}\mathbb{E}\left[\|\mathbf{y}_k-\mathbf{1}\otimes \bar{y}_k\|^2\right]+\frac{3\tau C_f}{n}\mathbb{E}\left[\normm{\nabla \mathbf{g}_k-\hat{\mathbf{G}}_k}_F^2\right]+\frac{3\tau p C_f}{n}\mathbb{E}\left[\normm{ \mathbf{z}_k-\mathbf{1}\otimes \bar{z}_k}^2\right].
		\end{align}}
	
	Plug (\ref{U-bound}), (\ref{grad-bound-3}) into (\ref{grad-bound-1}) and set $\tau=\frac{2\alpha_k}{3}$,
	{\small\begin{align}
		&\mathbb{E}\left[h(\bar{x}_{k+1})\right]\notag\\
		&\le \mathbb{E}\left[h(\bar{x}_k)\right]-\frac{\alpha_k}{4}\mathbb{E}\left[\|\nabla h(\bar{x}_k)\|^2\right]+2\alpha_k\left(\frac{C_g^2L_f^2}{n}+\frac{C_fL_g^2}{n}\right)\mathbb{E}\left[\|\mathbf{x}_k-\mathbf{1}\otimes\bar{x}_k\|^2\right]\notag\\
		&\quad+\frac{2\alpha_k C_gL_f^2}{n}\mathbb{E}\left[\|\mathbf{g}_k-\mathbf{G}_k\|^2\right]+\frac{2\alpha_k C_gL_f^2}{n}\mathbb{E}\left[\|\mathbf{y}_k-\mathbf{1}\otimes \bar{y}_k\|^2\right]+3p\frac{L\alpha_{k}^2}{n}C_fC_g\notag\\
		\label{h-bound}&\quad+\frac{\left(4 C_f+3pLC_f\alpha_k\right)\alpha_k}{2n}\mathbb{E}\left[\normm{\nabla \mathbf{g}_k-\hat{\mathbf{G}}_k}_F^2\right]+\frac{\left(4C_f+3LC_f\alpha_k\right)p\alpha_k}{2n}\mathbb{E}\left[\normm{ \mathbf{z}_k-\mathbf{1}\otimes \bar{z}_k}^2\right].
		\end{align}}

	Next, we  provide an  upper bound of $\mathbb{E}\left[\nabla h(x_{i,k})\right]$. By the Lipschitz continuity of $\nabla h(\cdot)$,
	\begin{align*}
	\frac{1}{2}\mathbb{E}\left[\|\nabla h(x_{i,k})\|^2\right]&\le \mathbb{E}\left[\|\nabla h(x_{i,k})-\nabla h(\bar{x}_k)\|^2\right]+\mathbb{E}\left[\|\nabla h(\bar{x}_k)\|^2\right]\\
	&\le L^2\mathbb{E}\left[\|x_{i,k}-\bar{x}_k\|^2\right]+\mathbb{E}\left[\|\nabla h(\bar{x}_k)\|^2\right]\\
	&\le L^2\mathbb{E}\left[\|\mathbf{x}_{k}-\mathbf{1}\otimes\bar{x}_k\|^2\right]+\mathbb{E}\left[\|\nabla h(\bar{x}_k)\|^2\right],
	\end{align*}
	where the second inequality follows from the Lipschitz continuity of $\nabla h(\cdot)$.
	Then
	\begin{align*}
	-\mathbb{E}\left[\|\nabla h(\bar{x}_k)\|^2\right]\le -\frac{1}{2}\mathbb{E}\left[\|\nabla h(x_{i,k})\|^2\right]+L^2\mathbb{E}\left[\|\mathbf{x}_{k}-\mathbf{1}\otimes\bar{x}_k\|^2\right].
	\end{align*}
	Substitute the above inequality into (\ref{h-bound}),
	{\small\begin{align}
		&\mathbb{E}\left[h(\bar{x}_{k+1})\right]\notag\\
		&\le \mathbb{E}\left[h(\bar{x}_k)\right]-\frac{\alpha_k}{8}\mathbb{E}\left[\|\nabla h(x_{i,k})\|^2\right]+2\alpha_k\left(\frac{C_g^2L_f^2}{n}+\frac{C_fL_g^2}{n}+\frac{L^2}{8}\right)\mathbb{E}\left[\|\mathbf{x}_k-\mathbf{1}\otimes\bar{x}_k\|^2\right]\notag\\
		&\quad+\frac{2\alpha_k C_gL_f^2}{n}\mathbb{E}\left[\|\mathbf{g}_k-\mathbf{G}_k\|^2\right]+\frac{2\alpha_k C_gL_f^2}{n}\mathbb{E}\left[\|\mathbf{y}_k-\mathbf{1}\otimes \bar{y}_k\|^2\right]+3p\frac{L\alpha_{k}^2}{n}C_fC_g\notag\\
		&\quad+\frac{\left(4 C_f+3pLC_f\alpha_k\right)\alpha_k}{2n}\mathbb{E}\left[\normm{\nabla \mathbf{g}_k-\hat{\mathbf{G}}_k}_F^2\right]+\frac{\left(4C_f+3LC_f\alpha_k\right)p\alpha_k}{2n}\mathbb{E}\left[\normm{ \mathbf{z}_k-\mathbf{1}\otimes \bar{z}_k}^2\right]\notag\\
		\label{grad-bound-5}&\le \mathbb{E}\left[h(\bar{x}_k)\right]-\frac{\alpha_k}{8}\mathbb{E}\left[\|\nabla h(x_{i,k})\|^2\right]+3p\frac{L\alpha_{k}^2}{n}C_fC_g+c_6\alpha_k V_k,
		\end{align}}
	where
	\begin{align*}
	c_6=\max\left\{2\left(\frac{C_g^2L_f^2}{n}+\frac{C_fL_g^2}{n}+\frac{L^2}{8}\right),~\frac{2 C_gL_f^2}{n\min\{c_1,c_3\}},~\frac{\left(4C_f+3LC_f\right)p}{2\min\{c_2,c_4\}n}\right\},
	\end{align*}
	 $c_1, c_3, c_3, c_4$ are defined in (\ref{notation-1-0}) and $V_k$ is defined in (\ref{notation-1-2}). Reordering the terms of (\ref{grad-bound-5}) and summing over $k$ from 1 to $K$,
	\begin{align}
	\sum_{k=1}^K\frac{\alpha_k}{8}\mathbb{E}\left[\|\nabla h(x_{i,k})\|^2\right]
	\label{grad-bound-6}&\le \mathbb{E}\left[h(\bar{x}_1)\right]-\mathbb{E}\left[h(\bar{x}_{K+1})\right]+\frac{3pLC_fC_g}{n}\sum_{k=1}^K\alpha_k^2+c_6\sum_{k=1}^K\alpha_{k}V_k.
	\end{align}
	Note that definitions of $\alpha_k$, $\beta_k$, $\gamma_k$ guarantee $a_k=a<1$,\footnote{For any fixed $K$, $a_k$ is a constant dependent on $K$.} $b_k=b/K$ where $b$ is defined in (\ref{b}). Then by Lemma \ref{lem:complex},
	\begin{equation*}
	V_k\le a_{k-1}V_{k-1}+b_{k-1}=aV_{k-1}+b/K\le \cdots \le a^{k-1} V_1+(b/K)\sum_{t=0}^{k-2} a^t \le a^{k-1} V_1+\frac{b}{K(1-a)}.
	\end{equation*}
	Substitute the above inequality into (\ref{grad-bound-6}) and multiply both sides of (\ref{grad-bound-6}) by $\frac{8}{s_1\sqrt{K}}$,
	{\small		
		\begin{align}
		\frac{1}{K}\sum_{k=1}^K\mathbb{E}\left[\|\nabla h(x_{i,k})\|^2\right]
		&\le \frac{8\mathbb{E}\left[h(\bar{x}_1)\right]-\mathbb{E}\left[h(\bar{x}_{K+1})\right]}{s_1\sqrt{K}}+\frac{24pLC_fC_gs_1}{n\sqrt{K}}+\frac{8c_6}{K}\sum_{k=1}^K\left(a^{k-1} V_1+\frac{b}{K(1-a)}\right)\notag\\
		\label{rate-1}&\le \frac{8\mathbb{E}\left[h(\bar{x}_1)\right]-\mathbb{E}\left[h(\bar{x}_{K+1})\right]}{s_1\sqrt{K}}+\frac{24pLC_fC_gs_1}{n\sqrt{K}}+\frac{8c_6(V_1+b)}{K(1-a)}.
		\end{align}
	}
	The proof is complete.
\end{proof}
By the definitions of $a,b,V_1$, the third term on the right hand side of (\ref{rate-1}) could be expressed more exactly,  
\begin{equation*}
\mathcal{O}\left(\frac{\max\left\{\frac{1}{1-\rho^2},\sqrt{K}\right\}}{K}\right).
\end{equation*}
 Then similar to the more recent work \cite{yang2022bilevel}, 
the order of magnitude of inequality (\ref{rate-1}) is $\mathcal{O}(1/\sqrt{K})$, which  achieves the optimal convergence rate of  stochastic gradient descent  method \cite{Ghadimi2013nonconvex}. 
   The DSBO method \cite{yang2022bilevel} combines the gossip communication with weighted average stochastic approximation and  D-ASCGD combines the  distributed stochastic gradient descent method with hybrid variance reduction method  and dynamic consensus mechanism.

\section{Compressed D-ASCGD method}\label{sec:CD-method}

In recent years, various  techniques have been developed to reduce communication costs \cite{xu2020compressed,Tang2020survey}. They are extensively incorporated into centralized optimization methods \cite{Dan2017QSGD,seide20141,Guyon2017TernGrad} and decentralized methods \cite{Thinh2021random,Thinh2021fast,liu2021linear,Yi2022Nonconvex,Liao2022compGT}.
 This motivates us to   provide an extension of D-ASCGD by combining it with communication compressed method, which reads as follows.



\begin{algorithm}[H]
	\caption{$\underline{\text{C}}$ompressed $\underline{\text{D}}$istributed $\underline{\text{A}}$ggregative $\underline{\text{S}}$tochastic $\underline{\text{C}}$ompositional $\underline{\text{G}}$radient $\underline{\text{D}}$escent (CD-ASCGD)\protect\footnotemark: }\label{alg:c-SIA}
	\begin{algorithmic}[1]
		\REQUIRE  initial values {\small$\mathbf{x}_{1} $,$ \mathbf{G}_{1},\hat{\mathbf{G}}_{1},\mathbf{H}_1^{x},\mathbf{H}_1^{y},\mathbf{H}_1^{z}$,$~\mathbf{y}_{1}=\mathbf{G}_{1}, \mathbf{z}_{1}=\hat{\mathbf{G}}_{1}$}; stepsizes {\small$\alpha_k>0$, $\beta_k>0,\gamma_k>0$}; scaling parameters {\small$\alpha_w\in(0,1)$}; nonnegative weight matrix {\small$\mathbf{W}\in \mathbb{R}^{n\times n}$, $\tilde{\mathbf{W}}=\mathbf{W}\otimes\mathbf{I}_d$}    
		\STATE $\mathbf{H}_1^{x,w}=\tilde{\mathbf{W}}\mathbf{H}_1^{x},\mathbf{H}_1^{y,w}=\tilde{\mathbf{W}}\mathbf{H}_1^{y},\mathbf{H}_1^{z,w}=\tilde{\mathbf{W}}\mathbf{H}_1^{z}$
		\FOR {$k=1,2,\cdots$}
		\STATE $\breve{\mathbf{x}}_k,\breve{\mathbf{x}}_k^w,\mathbf{H}_{k+1}^{x},\mathbf{H}_{k+1}^{x,w}=\text{C}\text{OMM}\left(\mathbf{x}_k,\mathbf{H}_{k}^{x},\mathbf{H}_{k}^{x,w}\right)$
		\STATE $\breve{\mathbf{y}}_k,\breve{\mathbf{y}}_k^w,\mathbf{H}_{k+1}^{y},\mathbf{H}_{k+1}^{y,w}=\text{C}\text{OMM}\left(\mathbf{y}_k,\mathbf{H}_{k}^{y},\mathbf{H}_{k}^{y,w}\right)$
		\STATE $\breve{\mathbf{z}}_k,\breve{\mathbf{z}}_k^w,\mathbf{H}_{k+1}^{z},\mathbf{H}_{k+1}^{z,w}=\text{C}\text{OMM}\left(\mathbf{z}_k,\mathbf{H}_{k}^{z},\mathbf{H}_{k}^{z,w}\right)$
		\STATE For any $i\in\mathcal{V}$, draw $\phi_{i,k+1}\stackrel{iid}\sim P_{\phi_i},~\zeta_{i,k+1}\stackrel{iid}\sim P_{\zeta_i}$, and compute function values $G_i(x_{i,k};\phi_{i,k+1})$, $G_i(x_{i,k+1};\phi_{i,k+1})$ and gradients $\nabla F_i(y_{i,k};\zeta_{i,k+1})$, $\nabla G_i(x_{i,k};\phi_{i,k+1})$ and $\nabla G_i(x_{i,k+1};\phi_{i,k+1})$
		\STATE\label{c-x}$\mathbf{x}_{k+1}=\mathbf{x}_k-\alpha_w\left(\breve{\mathbf{x}}_k-\breve{\mathbf{x}}_k^w\right)-\alpha_k\mathbf{U}_{k+1}$
		\STATE$\mathbf{G}_{k+1}=(1-\beta_k)\left(\mathbf{G}_k-G_{k+1,k}\right)+G_{k+1,k+1}$
		\STATE$\hat{\mathbf{G}}_{k+1}=(1-\gamma_k)\left(\hat{\mathbf{G}}_k-\nabla G_{k+1,k}\right)+\nabla G_{k+1,k+1}$
		\STATE\label{c-y}$\mathbf{y}_{k+1}=\mathbf{y}_k-\alpha_w\left(\breve{\mathbf{y}}_k-\breve{\mathbf{y}}_k^w\right)+\mathbf{G}_{k+1}-\mathbf{G}_k$
		 \STATE\label{c-z}$\mathbf{z}_{k+1}=\mathbf{z}_k-\alpha_w\left(\breve{\mathbf{z}}_k-\breve{\mathbf{z}}_k^w\right)+\hat{\mathbf{G}}_{k+1}-\hat{\mathbf{G}}_k$
		\ENDFOR
	\end{algorithmic}
\end{algorithm}\footnotetext{A complete algorithm description from
the agent’s perspective can be found in Appendix \ref{apd:CD-AS-agent}.}
{\footnotesize{*In Lines 3-5 of Algorithm \ref{alg:c-SIA}, set scaling parameter $\alpha$ and compressor $\mathcal{C}$ as $\alpha_x$ and $\mathcal{C}_1$  for $\mathbf{x}_k$, $\alpha_y$ and $\mathcal{C}_2$ for $\mathbf{y}_k$,  $\alpha_z$ and $\mathcal{C}_3$ for $\mathbf{z}_k$ in procedure $\text{COMM}\left(\mathbf{v},\mathbf{H},\mathbf{H}^{w}\right)$.
}}

 \begin{algorithm}[h]
 	\caption{\textbf{procedure} COMM \cite{liu2021linear} } \label{COMMA}
   \begin{algorithmic}[1]
   	\REQUIRE $\mathbf{v}$, $\mathbf{H}$, $\mathbf{H}^{w}$
 	\STATE\label{c-1}$\mathbf{Q}=\mathbf{Compress}\left(\mathbf{v}-\mathbf{H}\right)$
 	\quad \quad \quad \quad \quad \quad \quad \quad $\triangleright$ Compression
 	\STATE\label{c-2}$\breve{\mathbf{v}}=\mathbf{H}+\mathbf{Q}$
 	\STATE\label{c-3}$\breve{\mathbf{v}}^w=\mathbf{H}^w+\tilde{\mathbf{W}}\mathbf{Q}$ \quad \quad \quad \quad \quad \quad \quad \quad \quad \quad \quad \quad $\triangleright$ Communication
 	\STATE\label{c-4}$\mathbf{H}\longleftarrow(1-\alpha)\mathbf{H}+\alpha\breve{\mathbf{v}}$
 	\STATE\label{c-5}$\mathbf{H}^w\longleftarrow(1-\alpha)\mathbf{H}^w+\alpha\breve{\mathbf{v}}^w$
 	\STATE\textbf{Return:} $\breve{\mathbf{v}},\breve{\mathbf{v}}^w,\mathbf{H},\mathbf{H}^w$\\
 \end{algorithmic}
\end{algorithm}


  We first explain how  $\text{COMM}\left(\mathbf{v},\mathbf{H},\mathbf{H}^{w}\right)$ in Algorithm \ref{COMMA} works by taking  decision variable $\mathbf{x}_{k}$ as an example.
With inputs including decision variable $\mathbf{x}_{k}$ and its two auxiliary variables $\mathbf{H}_{k}^{x}$ and $\mathbf{H}_{k}^{x,w}$, $\text{COMM}\left(\mathbf{v},\mathbf{H},\mathbf{H}^{w}\right)$ compress
$\mathbf{x}_{k}-\mathbf{H}_{k}^{x}$  as a low-bit variable $\mathbf{Q}$. Next, all agents communicate low-bit variable $\mathbf{Q}$ with their neighbors and obtain $\breve{\mathbf{x}}_k^w=\mathbf{H}_k^{x,w}+\tilde{\mathbf{W}}\mathbf{Q}$. Auxiliary variables $\mathbf{H}_{k}^{x}$ and $\mathbf{H}_{k}^{x,w}$ are updated in Lines \ref{c-4} and  \ref{c-5} to improve the stability of the compression procedure.


Now we explain the relationship between the  Algorithm \ref{alg:c-SIA} and   Algorithm \ref{alg:SIA}.
Denoting $\mathbf{E}_{k+1}^x\define\mathbf{x}_k-\breve{\mathbf{x}}_k$,  the iteration $x_k$  in line \ref{c-x} can be reformulated as
\begin{align}
\label{alg:c-x}&\mathbf{x}_{k+1}
=\left[(1-\alpha_w)\mathbf{I}_{nd}+\alpha_w\tilde{\mathbf{W}}\right]\mathbf{x}_k-\alpha_k\mathbf{U}_{k+1}+\alpha_w\left[\mathbf{I}_{nd}-\tilde{\mathbf{W}}\right]\mathbf{E}_{k+1}^x.
\end{align}
Compared with the iterations of  $\mathbf{x}_k$ in  Algorithm \ref{alg:SIA},  there   is a term $\alpha_w\left[\mathbf{I}_{nd}-\tilde{\mathbf{W}}\right]\mathbf{E}_{k+1}^x$ induced by
the compress errors $\mathbf{E}_{k+1}^x$.
 Similarly,  denote $\mathbf{E}_{k+1}^y\define \mathbf{y}_k-\breve{\mathbf{y}}_k$ and $\mathbf{E}_{k+1}^{z}\define \mathbf{z}_k-\breve{\mathbf{z}}_k$, the iterations $\mathbf{y}_{k+1}$ and $\mathbf{z}_{k+1}$
 in lines \ref{c-y} and \ref{c-z} of Algorithm \ref{alg:c-SIA} can be reformulated as
\begin{align}
\label{alg:c-y}&\mathbf{y}_{k+1}
=\left[(1-\alpha_w)\mathbf{I}_{nd}+\alpha_w\tilde{\mathbf{W}}\right]\mathbf{y}_k+\mathbf{G}_{k+1}-\mathbf{G}_k+\alpha_w\left[\mathbf{I}_{nd}-\tilde{\mathbf{W}}\right]\mathbf{E}_{k+1}^y,\\
\label{alg:c-z}&\mathbf{z}_{k+1}
=\left[(1-\alpha_w)\mathbf{I}_{nd}+\alpha_w\tilde{\mathbf{W}}\right]\mathbf{z}_k+\hat{\mathbf{G}}_{k+1}-\hat{\mathbf{G}}_k+\alpha_w\left[\mathbf{I}_{nd}-\tilde{\mathbf{W}}\right]\mathbf{E}_{k+1}^{z}.
\end{align}
Again, the difference between  (\ref{alg:c-y})-(\ref{alg:c-z}) and
  the iterations of  $\mathbf{y}_{k+1}, \mathbf{z}_{k+1}$ in  Algorithm \ref{alg:SIA},  are the terms induced by  compress errors  $\mathbf{E}_{k+1}^y$ and $\mathbf{E}_{k+1}^y$.

In the next, we establish the convergence  of CD-ASCGD.  The following conditions on  the compressors are needed.

\begin{ass}\label{ass:compressor}
The compressors $\mathcal{C}_1:\mathbb{R}^d\longrightarrow \mathbb{R}^d$, $\mathcal{C}_2:\mathbb{R}^p\longrightarrow \mathbb{R}^p$ and $\mathcal{C}_3:\mathbb{R}^{d\times p}\longrightarrow \mathbb{R}^{d\times p}$ satisfy
\begin{equation}
\label{c1}\mathbb{E}\left[\left\|\frac{\mathcal{C}_1(x)}{r_1}-x\right\|^2\right]\le (1-\psi_1)\|x\|^2,\quad \forall x\in\mathbb{R}^d,
\end{equation}
\begin{equation}
\label{c2}\mathbb{E}\left[\left\|\frac{\mathcal{C}_2(z)}{r_2}-z\right\|^2\right]\le (1-\psi_2)\|z\|^2,\quad \forall z\in\mathbb{R}^p,
\end{equation}
\begin{equation}
\label{c3}\mathbb{E}\left[\normm{\frac{\mathcal{C}_3(y)}{r_3}-y}_F^2\right]\le (1-\psi_3)\normm{y}_F^2,\quad \forall y\in\mathbb{R}^{d\times  p}
\end{equation}
for some constants $\psi_1,\psi_2,\psi_3\in (0,1]$ and $r_1,r_1,r_3\in (0,+\infty)$. Here $\mathbb{E}[\cdot]$ denotes the expectation over the internal randomness of the stochastic compression operator.
\end{ass}
For  vector $x\in\mathbb{R}^d$ or $z\in\mathbb{R}^p$, the class of compressors satisfying (\ref{c1}) or (\ref{c2})  is broad,   such as random quantization \cite{suresh17Limited,seide20141}, sparsification \cite{lin2018deep,Nikita2019sketch},  the norm-sign
compressor \cite{Yi2022Nonconvex,Liao2022compGT}. For matrix $y\in\mathbb{R}^{d\times p}$, we may construct the $dp$-length vector by stacking up the columns of $y$ and then implement predetermined compressors to compress the new constructed vector.

Similar to the analysis of D-ASCGD, we first provide the upper bounds of the local inner-level function estimating errors and the consensus errors of the CD-ASCGD in Lemmas \ref{lem:c-tracking} and \ref{lem:c-consensus} respectively.

\begin{lem}\label{lem:c-tracking}
Under Assumptions \ref{ass-objective}-\ref{ass:compressor},
\begin{align*}
&\mathbb{E}\left[\|\mathbf{G}_{k+1}- \mathbf{g}_{k+1}\|^2\right]\notag\\
&\le(1-\beta_k)^2\mathbb{E}\left[\|\mathbf{G}_k-\mathbf{g}_k\|^2\right]+3\beta_k^2V_g+72(1-\beta_k)^2C_g\alpha_w^2\breve{r}_1\mathbb{E}\left[\left\|\mathbf{x}_k-\mathbf{H}_k^x\right\|^2\right]\notag\\
&\quad+18(1-\beta_k)^2C_g\alpha_k^2C_f\mathbb{E}\left[\normm{\mathbf{z}_k}_F^2\right]+72(1-\beta_k)^2C_g\alpha_w^2\mathbb{E}\left[ \left\|\mathbf{x}_k-\mathbf{1}\otimes \bar{x}_k\right\|^2\right]
\end{align*}
and
\begin{align}
&\mathbb{E}\left[\normm{\hat{\mathbf{G}}_{k+1}- \nabla\mathbf{g}_{k+1}}_F^2\right]\notag\\
&\le(1-\gamma_k)^2\mathbb{E}\left[\normm{\hat{\mathbf{G}}_k-\nabla\mathbf{g}_k}_F^2\right]+3\gamma_k^2V_g^{'}+72(1-\gamma_k)^2pL_g^2\alpha_w^2\breve{r}_1\mathbb{E}\left[\left\|\mathbf{x}_k-\mathbf{H}_k^x\right\|^2\right]\notag\\
&\quad+18(1-\gamma_k)^2pL_g^2\alpha_k^2C_f\mathbb{E}\left[\normm{\mathbf{z}_k}_F^2\right]+72(1-\gamma_k)^2pL_g^2\alpha_w^2\mathbb{E}\left[ \left\|\mathbf{x}_k-\mathbf{1}\otimes \bar{x}_k\right\|^2\right],
\end{align}
where $\breve{r}_1=2r_1^2(1-\psi_1)+2\left(1-r_1\right)^2$, $\psi_1$ and $r_1$ are defined in Assumption \ref{ass:compressor}.
\end{lem}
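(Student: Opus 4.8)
The plan is to mirror the proof of Lemma \ref{lem:tracking}, since the hybrid variance reduction recursion for $\mathbf{G}_{k+1}$ coincides with the one in Algorithm \ref{alg:SIA}; the only place compression enters is through the one-step displacement $\mathbb{E}[\|\mathbf{x}_{k+1}-\mathbf{x}_k\|^2]$. First I would write $\mathbf{G}_{k+1}-\mathbf{g}_{k+1}=(1-\beta_k)(\mathbf{G}_k-\mathbf{g}_k)+A_k$, where
$$A_k=(1-\beta_k)\big[(G_{k+1,k+1}-G_{k+1,k})-(\mathbf{g}_{k+1}-\mathbf{g}_k)\big]+\beta_k\big(G_{k+1,k+1}-\mathbf{g}_{k+1}\big).$$
Conditioning on the $\sigma$-algebra generated by $\mathcal{F}_k$, the internal compression randomness up to step $k$, and $\{\zeta_{i,k+1}\}_{i\in\mathcal{V}}$ (which renders $\mathbf{x}_{k+1}$, hence $\mathbf{g}_{k+1}$, measurable), Assumption \ref{ass-objective}(b) gives $\mathbb{E}[G_{k+1,k}\mid\cdot]=\mathbf{g}_k$ and $\mathbb{E}[G_{k+1,k+1}\mid\cdot]=\mathbf{g}_{k+1}$, so $\mathbb{E}[A_k\mid\cdot]=0$. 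Since $(1-\beta_k)(\mathbf{G}_k-\mathbf{g}_k)$ is measurable with respect to this $\sigma$-algebra, the cross term vanishes and $\mathbb{E}[\|\mathbf{G}_{k+1}-\mathbf{g}_{k+1}\|^2]=(1-\beta_k)^2\mathbb{E}[\|\mathbf{G}_k-\mathbf{g}_k\|^2]+\mathbb{E}[\|A_k\|^2]$.

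Next I would bound $\mathbb{E}[\|A_k\|^2]$. Splitting the two summands and using ``variance $\le$ second moment'' together with the mean-square Lipschitz estimate $\mathbb{E}[\|G_i(x_{i,k+1};\phi_{i,k+1})-G_i(x_{i,k};\phi_{i,k+1})\|^2]\le C_g\,\mathbb{E}[\|x_{i,k+1}-x_{i,k}\|^2]$ implied by Assumption \ref{ass-objective}(c), the first summand is controlled by $(1-\beta_k)^2 C_g\,\mathbb{E}[\|\mathbf{x}_{k+1}-\mathbf{x}_k\|^2]$, while the second is bounded by $\beta_k^2 V_g$ via Assumption \ref{ass-objective}(d). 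The decisive step is therefore to bound $\mathbb{E}[\|\mathbf{x}_{k+1}-\mathbf{x}_k\|^2]$ using the compressed recursion (\ref{alg:c-x}). Writing
$$\mathbf{x}_{k+1}-\mathbf{x}_k=\alpha_w(\tilde{\mathbf{W}}-\mathbf{I}_{nd})(\mathbf{x}_k-\mathbf{1}\otimes\bar{x}_k)-\alpha_k\mathbf{U}_{k+1}-\alpha_w(\tilde{\mathbf{W}}-\mathbf{I}_{nd})\mathbf{E}_{k+1}^x$$
(using $(\tilde{\mathbf{W}}-\mathbf{I}_{nd})(\mathbf{1}\otimes\bar{x}_k)=\mathbf{0}$) and applying a three-term Cauchy--Schwartz split, I would bound separately $\normm{\tilde{\mathbf{W}}-\mathbf{I}_{nd}}^2$, the gradient term $\mathbb{E}[\|\mathbf{U}_{k+1}\|^2]\le C_f\,\mathbb{E}[\normm{\mathbf{z}_k}_F^2]$ (Assumption \ref{ass-objective}(c)), and the compression error. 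Compared with the two-term split of Lemma \ref{lem:tracking}, this accounts exactly for the $3/2$ inflation of the constants ($72,18$ against $48,12$) and for the appearance of $\alpha_w^2$ on the consensus and compression terms.

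The one genuinely new estimate is the compression error. With $u\define\mathbf{x}_k-\mathbf{H}_k^x$ and $\mathbf{Q}=\mathcal{C}_1(u)$, Line \ref{c-2} of Algorithm \ref{COMMA} gives $\mathbf{E}_{k+1}^x=\mathbf{x}_k-\breve{\mathbf{x}}_k=u-\mathcal{C}_1(u)$. I would use the identity $u-\mathcal{C}_1(u)=-r_1\big(\tfrac{\mathcal{C}_1(u)}{r_1}-u\big)+(1-r_1)u$, whence by Young's inequality and the compressor bound (\ref{c1}),
$$\mathbb{E}[\|\mathbf{E}_{k+1}^x\|^2]\le 2r_1^2(1-\psi_1)\|u\|^2+2(1-r_1)^2\|u\|^2=\breve{r}_1\,\mathbb{E}[\|\mathbf{x}_k-\mathbf{H}_k^x\|^2],$$
which is precisely the term carrying $\breve{r}_1$ in the statement. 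Collecting the three contributions yields the claimed inequality for $\mathbf{G}_{k+1}$. The second inequality is proved identically after replacing $\beta_k,C_g,V_g$ and the Euclidean norm by $\gamma_k,L_g^2,V_g'$ and the Frobenius norm; here Assumption \ref{ass-objective}(a) supplies $\mathbb{E}[\normm{\nabla G_i(x_{i,k+1};\phi_{i,k+1})-\nabla G_i(x_{i,k};\phi_{i,k+1})}_F^2]\le L_g^2\,\mathbb{E}[\|x_{i,k+1}-x_{i,k}\|^2]$, and the factor $p$ enters when passing between the matrix norm and the Frobenius norm. The main obstacle I anticipate is the filtration bookkeeping, so that the martingale cancellation of $A_k$ survives the added compression randomness, together with keeping the many constants consistent through the three-term split.
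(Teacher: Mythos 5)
Your proposal is correct and follows essentially the paper's route: the paper's own proof of this lemma is simply ``similar to Lemma \ref{lem:tracking}'', and your argument reproduces that proof with the one genuinely new ingredient---the compressed one-step displacement bound $\mathbb{E}\left[\|\mathbf{x}_{k+1}-\mathbf{x}_k\|^2\right]\le 3\left(4\alpha_w^2\mathbb{E}\left[\|\mathbf{x}_k-\mathbf{1}\otimes\bar{x}_k\|^2\right]+\alpha_k^2C_f\mathbb{E}\left[\normm{\mathbf{z}_k}_F^2\right]+4\alpha_w^2\breve{r}_1\mathbb{E}\left[\|\mathbf{x}_k-\mathbf{H}_k^x\|^2\right]\right)$, including your identity-based derivation of the $\breve{r}_1$-bound on $\mathbf{E}_{k+1}^x$, which is exactly the estimate (\ref{x-x}) the paper itself establishes in the proof of Lemma \ref{lem:compression}, and your filtration (augmenting $\mathcal{F}_k$ with the iteration-$k$ compression randomness and $\{\zeta_{i,k+1}\}$) is the right fix for the martingale cancellation. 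One cosmetic remark: your ``variance $\le$ second moment'' treatment of $A_k$ with a two-term split yields the factors $2(1-\beta_k)^2C_g$ and $2\beta_k^2V_g$ rather than the $6(1-\beta_k)^2C_g$ and $3\beta_k^2V_g$ produced by the paper's three-term split in Lemma \ref{lem:tracking}, so your route actually gives constants strictly below the stated $72/18/72$ (hence the lemma a fortiori), whereas reproducing the stated constants exactly requires the paper's split.
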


\begin{lem}\label{lem:c-consensus}
	Suppose (a) Assumptions \ref{ass-objective}-\ref{ass:compressor} hold; (b) $ \{\beta_k\}$ and $\{\gamma_k\}$ are nonincreasing sequences such that $\beta_1\le 1$ and $\gamma_1\le 1$. Then
	\begin{align*}
	\mathbb{E}\left[\left\|\mathbf{x}_{k+1}-\mathbf{1}\otimes\bar{x}_{k+1}\right\|^2\right]
	&\le \frac{1+\rho_w^2}{2}\mathbb{E}\left[\left\|\mathbf{x}_k-\mathbf{1}\otimes\bar{x}_k\right\|^2\right]+\frac{1+\rho_w^2}{1-\rho_w^2}\alpha_k^2C_f\mathbb{E}\left[\normm{\mathbf{z}_k}_F^2\right]\notag\\
	&\quad+4\frac{1+\rho_w^2}{1-\rho_w^2}\alpha_w^2\breve{r}_1\mathbb{E}\left[\left\|\mathbf{x}_k-\mathbf{H}_k^x\right\|^2\right],
	\end{align*}
	{\small\begin{align*}
		&\mathbb{E}\left[\left\|\mathbf{y}_{k+1}-\mathbf{1}\otimes\bar{y}_{k+1}\right\|^2\right]\notag\\
		&\le \frac{1+\rho_w^2}{2}\mathbb{E}\left[\left\|\mathbf{y}_k-\mathbf{1}\otimes\bar{y}_k\right\|^2\right]+4\frac{1+\rho_w^2}{1-\rho_w^2}\left(\beta_k^2\mathbb{E}\left[\left\|\mathbf{G}_k-\mathbf{g}_k\right\|^2\right]+\beta_k^2V_g\right.\notag\\
		&\quad\left.+3C_g\left(4\alpha_w^2\mathbb{E}\left[ \left\|\mathbf{x}_k-\mathbf{1}\otimes \bar{x}_k\right\|^2\right]+\alpha_k^2C_f\mathbb{E}\left[\normm{\mathbf{z}_k}_F^2\right]\right)\right)\notag\\
		 &\quad+48C_g\frac{1+\rho_w^2}{1-\rho_w^2}\alpha_w^2\breve{r}_1\mathbb{E}\left[\left\|\mathbf{x}_k-\mathbf{H}_k^x\right\|^2\right]+4\frac{1+\rho_w^2}{1-\rho_w^2}\alpha_w^2\breve{r}_2\mathbb{E}\left[\left\|\mathbf{y}_k-\mathbf{H}_k^y\right\|^2\right]
		\end{align*}}
	and
	{\small\begin{align*}
		&\mathbb{E}\left[\normm{\mathbf{z}_{k+1}-\mathbf{1}\otimes\bar{z}_{k+1}}^2\right]\notag\\
		&\le \frac{1+\rho_w^2}{2}\mathbb{E}\left[\normm{\mathbf{z}_{k}-\mathbf{1}\otimes\bar{z}_{k}}^2\right]+4\frac{1+\rho_w^2}{1-\rho_w^2}\left(\gamma_k^2\mathbb{E}\left[\normm{\hat{\mathbf{G}}_k-\nabla\mathbf{g}_k}^2_F\right]+\gamma_k^2V_g^{'}\right.\notag\\
		&\quad\left.+3L_g^2\left(4\alpha_w^2\mathbb{E}\left[ \left\|\mathbf{x}_k-\mathbf{1}\otimes \bar{x}_k\right\|^2\right]+\alpha_k^2C_f\mathbb{E}\left[\normm{\mathbf{z}_k}_F^2\right]\right)\right)\notag\\
		&\quad +48L_g^2\frac{1+\rho_w^2}{1-\rho_w^2}\alpha_w^2\breve{r}_1\mathbb{E}\left[\left\|\mathbf{x}_k-\mathbf{H}_k^x\right\|^2\right]+4\frac{1+\rho_w^2}{1-\rho_w^2}\alpha_w^2\breve{r}_3\mathbb{E}\left[\normm{\mathbf{z}_k-\mathbf{H}_k^{z}}^2\right],
		\end{align*}}
	where $\rho_w\define \normm{(1-\alpha_w)\mathbf{I}_{nd}+\alpha_w\tilde{\mathbf{W}}-\frac{\mathbf{1}\mathbf{1}^\intercal}{n}\otimes\mathbf{I}_d}$, $\breve{r}_1=2r_1^2(1-\psi_1)+2\left(1-r_1\right)^2$, $\breve{r}_2=2r_2^2(1-\psi_2)+2\left(1-r_2\right)^2$, $\breve{r}_3=2r_3^2(1-\psi_3)+2\left(1-r_3\right)^2$, and $\psi_1,\psi_2,\psi_3,r_1,r_1,r_3$ are defined in Assumption \ref{ass:compressor}.
\end{lem}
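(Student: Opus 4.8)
The plan is to mirror the proof of Lemma \ref{lem:consensus}, replacing the plain mixing matrix $\tilde{\mathbf{W}}$ by the effective mixing matrix $\mathbf{M}\define(1-\alpha_w)\mathbf{I}_{nd}+\alpha_w\tilde{\mathbf{W}}$ and carrying along the compression-error terms. First I would record two preliminary facts. (i) Writing $\mathbf{P}\define\frac{\mathbf{1}\mathbf{1}^\intercal}{n}\otimes\mathbf{I}_d$, since $\mathbf{M}$ is doubly stochastic and $\left(\frac{\mathbf{1}^\intercal}{n}\otimes\mathbf{I}_d\right)(\mathbf{I}_{nd}-\tilde{\mathbf{W}})=0$, the averages still obey $\bar{x}_{k+1}=\bar{x}_k-\alpha_k\bar{U}_{k+1}$ and the corresponding identities for $\bar{y}_k,\bar{z}_k$, so the compression error disappears from the mean; moreover $\mathbf{M}$ commutes with $\mathbf{P}$, and $\normm{\mathbf{M}-\mathbf{P}}=\rho_w\le 1-\alpha_w(1-\rho)<1$ because of the convex-combination structure together with $\alpha_w\in(0,1)$ and $\rho<1$. (ii) From the compressor contraction (\ref{c1}) and the identity $\mathbf{E}_{k+1}^x=(\mathbf{x}_k-\mathbf{H}_k^x)-\mathbf{Q}$ with $\mathbf{Q}=\mathcal{C}_1(\mathbf{x}_k-\mathbf{H}_k^x)$, I would derive $\mathbb{E}\left[\|\mathbf{E}_{k+1}^x\|^2\right]\le\breve{r}_1\mathbb{E}\left[\|\mathbf{x}_k-\mathbf{H}_k^x\|^2\right]$ by splitting $\mathbf{E}^x=r_1\left(v-\tfrac{\mathcal{C}_1(v)}{r_1}\right)+(1-r_1)v$ with $v=\mathbf{x}_k-\mathbf{H}_k^x$ and using $\|a+b\|^2\le 2\|a\|^2+2\|b\|^2$; the analogues with $\breve{r}_2,\breve{r}_3$ follow from (\ref{c2})-(\ref{c3}).

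For the $\mathbf{x}$-bound I would project the reformulated update (\ref{alg:c-x}) onto the consensus-orthogonal subspace, obtaining $\mathbf{x}_{k+1}-\mathbf{1}\otimes\bar{x}_{k+1}=(\mathbf{M}-\mathbf{P})(\mathbf{x}_k-\mathbf{1}\otimes\bar{x}_k)-\alpha_k(\mathbf{I}_{nd}-\mathbf{P})\mathbf{U}_{k+1}+\alpha_w(\mathbf{I}_{nd}-\tilde{\mathbf{W}})\mathbf{E}_{k+1}^x$, where I used $(\mathbf{I}_{nd}-\mathbf{P})(\mathbf{I}_{nd}-\tilde{\mathbf{W}})=\mathbf{I}_{nd}-\tilde{\mathbf{W}}$. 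Applying Young's inequality with parameter $\eta=\frac{1-\rho_w^2}{2\rho_w^2}$ peels off the contraction term with coefficient $(1+\eta)\rho_w^2=\frac{1+\rho_w^2}{2}$ and leaves the remaining terms with factor $1+\eta^{-1}=\frac{1+\rho_w^2}{1-\rho_w^2}$; bounding $\|(\mathbf{I}_{nd}-\mathbf{P})\mathbf{U}_{k+1}\|^2\le\|\mathbf{U}_{k+1}\|^2\le C_f\normm{\mathbf{z}_k}_F^2$ via Assumption \ref{ass-objective}(c), together with $\normm{\mathbf{I}_{nd}-\tilde{\mathbf{W}}}^2\le4$ and fact (ii), produces exactly the stated $\mathbf{x}$-inequality.

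The $\mathbf{y}$- and $\mathbf{z}$-bounds follow the same projection-plus-Young template applied to (\ref{alg:c-y})-(\ref{alg:c-z}), the only new ingredient being the increment $\mathbf{G}_{k+1}-\mathbf{G}_k$ (resp. $\hat{\mathbf{G}}_{k+1}-\hat{\mathbf{G}}_k$). I would expand this increment as in the proof of Lemma \ref{lem:consensus} into a finite-difference part $G_{k+1,k+1}-G_{k+1,k}$, controlled by $C_g\mathbb{E}\left[\|\mathbf{x}_{k+1}-\mathbf{x}_k\|^2\right]$ through Assumption \ref{ass-objective}(c), and a momentum part proportional to $\beta_k$, controlled by $\|\mathbf{G}_k-\mathbf{g}_k\|^2$ and $V_g$ through Assumption \ref{ass-objective}(d). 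The crucial difference from the uncompressed case is that $\|\mathbf{x}_{k+1}-\mathbf{x}_k\|^2$ now inherits a compression contribution: from (\ref{alg:c-x}), $\mathbf{x}_{k+1}-\mathbf{x}_k=-\alpha_w(\mathbf{I}_{nd}-\tilde{\mathbf{W}})(\mathbf{x}_k-\mathbf{1}\otimes\bar{x}_k)-\alpha_k\mathbf{U}_{k+1}+\alpha_w(\mathbf{I}_{nd}-\tilde{\mathbf{W}})\mathbf{E}_{k+1}^x$, whose mean-square bound (using $\normm{\mathbf{I}_{nd}-\tilde{\mathbf{W}}}^2\le4$, fact (ii), and Assumption \ref{ass-objective}(c)) feeds the new terms $48C_g\frac{1+\rho_w^2}{1-\rho_w^2}\alpha_w^2\breve{r}_1\|\mathbf{x}_k-\mathbf{H}_k^x\|^2$ into the $\mathbf{y}$-bound, while the $\mathbf{E}^y$ term in (\ref{alg:c-y}) contributes the $\breve{r}_2\|\mathbf{y}_k-\mathbf{H}_k^y\|^2$ term; the $\mathbf{z}$-bound is identical with $L_g^2$, $\breve{r}_3$ and $V_g^{'}$ replacing $C_g$, $\breve{r}_2$ and $V_g$.

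I expect the main obstacle to be the constant bookkeeping in this last step: the increment $\mathbf{G}_{k+1}-\mathbf{G}_k$ couples the $\mathbf{y}$-consensus error to the compressed $\mathbf{x}$-update, so the compression error $\mathbf{E}^x$ enters the $\mathbf{y}$- and $\mathbf{z}$-bounds in addition to $\mathbf{E}^y$ and $\mathbf{E}^z$, and each of the three contributions (contraction, stochastic gradient, compression) must be separated by repeated Young and Cauchy--Schwarz splittings that generate the numerical factors ($4$, $48$, etc.) while keeping the contraction coefficient exactly $\frac{1+\rho_w^2}{2}$. Extra care is needed because neither the stochastic-gradient term nor the compression-error term is mean-zero, so the cross terms among these contributions cannot simply be dropped and must instead be absorbed through the choice of Young parameters.
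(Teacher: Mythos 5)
Your proposal is correct and is essentially the paper's own route: the paper's entire proof of this lemma is the remark that it is ``similar to Lemma \ref{lem:consensus}'', and your expansion --- the effective mixing matrix $(1-\alpha_w)\mathbf{I}_{nd}+\alpha_w\tilde{\mathbf{W}}$ with $\rho_w\le 1-\alpha_w(1-\rho)<1$, the Young split with $\eta=\frac{1-\rho_w^2}{2\rho_w^2}$, the compression-error bound $\mathbb{E}\left[\|\mathbf{E}_{k+1}^x\|^2\right]\le\breve{r}_1\mathbb{E}\left[\|\mathbf{x}_k-\mathbf{H}_k^x\|^2\right]$ derived from the split $r_1\left(v-\frac{\mathcal{C}_1(v)}{r_1}\right)+(1-r_1)v$, and the compressed increment bound on $\mathbb{E}\left[\|\mathbf{x}_{k+1}-\mathbf{x}_k\|^2\right]$ --- is exactly the machinery the paper deploys explicitly in the proof of Lemma \ref{lem:compression} (cf.\ inequality (\ref{x-x})), whose constants ($48C_g$ from $4\cdot 3C_g\cdot 4\alpha_w^2\breve{r}_1$, etc.) your bookkeeping reproduces. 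The one caveat is the point you yourself flag: a plain two-term Young split of the non-contraction part in the $\mathbf{x}$-recursion gives coefficients $2\frac{1+\rho_w^2}{1-\rho_w^2}\alpha_k^2C_f$ and $8\frac{1+\rho_w^2}{1-\rho_w^2}\alpha_w^2\breve{r}_1$ rather than the stated $1$ and $4$, so claiming it ``produces exactly the stated $\mathbf{x}$-inequality'' needs either a different bundling of the three terms or acceptance that the lemma's constants are generous --- a harmless discrepancy, but worth noting since the paper gives no written proof against which to check it.
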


The following lemma characterizes the upper bound of  compression errors.
\begin{lem}\label{lem:compression}
	Let $\alpha_x\in\left(0,\frac{1}{r_1}\right)$, $\alpha_y\in\left(0,\frac{1}{r_2}\right)$, $\alpha_z\in\left(0,\frac{1}{r_3}\right)$ and
	\begin{equation*}
	\alpha_w\in \bigg(0,~ \min\left\{\sqrt{\frac{(\alpha_xr_1\psi_1)^3}{24\breve{r}_1\left(1+\alpha_xr_1\psi_1\right)}},\sqrt{\frac{(\alpha_yr_2\psi_2)^3}{24\breve{r}_2\left(1+\alpha_yr_2\psi_2\right)}},\sqrt{\frac{(\alpha_zr_3\psi_3)^3}{24\breve{r}_3\left(1+\alpha_zr_3\psi_3\right)}}\right\}\bigg],
	\end{equation*}
	where $\breve{r}_1,\breve{r}_2$ and $\breve{r}_3$ are defined in Lemma \ref{lem:c-consensus}, $\psi_1,\psi_2,\psi_3,r_1,r_1,r_3$ are defined in Assumption \ref{ass:compressor}.
	Suppose (a) Assumptions \ref{ass-objective}-\ref{ass:compressor} hold; (b) $ \{\beta_k\}$ and $\{\gamma_k\}$ are nonincreasing sequences such that $\beta_1\le 1$ and $\gamma_1\le 1$. Then 	
	\begin{align}\label{comp-x-0}
	\mathbb{E}\left[\left\|\mathbf{x}_{k+1}-\mathbf{H}_{k+1}^x\right\|^2\right]
	 &\le\left(1-\frac{(\alpha_xr_1\psi_1)^2}{2}\right)\mathbb{E}\left[\left\|\mathbf{x}_{k}-\mathbf{H}_{k}^x\right\|^2\right]+\frac{1+\alpha_xr_1\psi_1}{\alpha_xr_1\psi_1}3\alpha_k^2C_f\mathbb{E}\left[\normm{\mathbf{z}_k}_F^2\right]\notag\\
	&\quad+12\frac{1+\alpha_xr_1\psi_1}{\alpha_xr_1\psi_1}\alpha_w^2\mathbb{E}\left[ \left\|\mathbf{x}_k-\mathbf{1}\otimes \bar{x}_k\right\|^2\right],
	\end{align}
	{\small\begin{align}\label{comp-z-0}
		\mathbb{E}\left[\left\|\mathbf{y}_{k+1}-\mathbf{H}_{k+1}^y\right\|^2\right]
		&\le \left(1-\frac{(\alpha_yr_2\psi_2)^2}{2}\right)\mathbb{E}\left[\left\|\mathbf{y}_{k}-\mathbf{H}_{k}^y\right\|^2\right]+9C_g\frac{1+\alpha_yr_2\psi_2}{\alpha_yr_2\psi_2}\alpha_k^2C_f\mathbb{E}\left[\normm{\mathbf{z}_k}_F^2\right]\notag\\
		&\quad+36C_g\frac{1+\alpha_yr_2\psi_2}{\alpha_yr_2\psi_2}\left(4\alpha_w^2\mathbb{E}\left[ \left\|\mathbf{x}_k-\mathbf{1}\otimes \bar{x}_k\right\|^2\right]+4\alpha_w^2\breve{r}_1\mathbb{E}\left[\left\|\mathbf{x}_k-\mathbf{H}_k^x\right\|^2\right]\right)\notag\\
		&\quad +12\frac{1+\alpha_yr_2\psi_2}{\alpha_yr_2\psi_2}\alpha_w^2\mathbb{E}\left[ \left\|\mathbf{y}_k-\mathbf{1}\otimes \bar{y}_k\right\|^2\right]+36C_g\frac{1+\alpha_yr_2\psi_2}{\alpha_yr_2\psi_2}\alpha_k^2C_f\mathbb{E}\left[\normm{\mathbf{y}_k}_F^2\right]\notag\\
		&\quad+12\frac{1+\alpha_yr_2\psi_2}{\alpha_yr_2\psi_2}\beta_k^2\mathbb{E}\left[\left\|\mathbf{G}_k-\mathbf{g}_k\right\|^2\right]+12\frac{1+\alpha_yr_2\psi_2}{\alpha_yr_2\psi_2}\beta_k^2V_g
		\end{align}}
	and
	{\small\begin{align}\label{comp-y-0}
		\mathbb{E}\left[\left\|\mathbf{z}_{k+1}-\mathbf{H}_{k+1}^{z}\right\|^2\right]
		&\le \left(1-\frac{(\alpha_zr_3\psi_3)^2}{2}\right)\mathbb{E}\left[\normm{\mathbf{z}_{k}-\mathbf{H}_{k}^{z}}^2\right]+9L_g^2\frac{1+\alpha_zr_3\psi_3}{\alpha_zr_3\psi_3}\alpha_k^2C_f\mathbb{E}\left[\normm{\mathbf{z}_k}_F^2\right]\notag\\
		&\quad+9L_g^2\frac{1+\alpha_zr_3\psi_3}{\alpha_zr_3\psi_3}\left(4\alpha_w^2\mathbb{E}\left[ \left\|\mathbf{x}_k-\mathbf{1}\otimes \bar{x}_k\right\|^2\right]+4\alpha_w^2\breve{r}_1\mathbb{E}\left[\left\|\mathbf{x}_k-\mathbf{H}_k^x\right\|^2\right]\right)\notag\\
		&\quad +12\frac{1+\alpha_zr_3\psi_3}{\alpha_zr_3\psi_3}\alpha_w^2\mathbb{E}\left[ \normm{\mathbf{z}_k-\mathbf{1}\otimes \bar{z}_k}^2\right]+36L_g^2\frac{1+\alpha_zr_3\psi_3}{\alpha_zr_3\psi_3}\alpha_k^2C_f\mathbb{E}\left[\normm{\mathbf{y}_k}_F^2\right]\notag\\
		&\quad+12\frac{1+\alpha_zr_3\psi_3}{\alpha_zr_3\psi_3}\gamma_k^2\mathbb{E}\left[\normm{\hat{\mathbf{G}}_k-\nabla\mathbf{g}_k}^2_F\right]+12\frac{1+\alpha_zr_3\psi_3}{\alpha_zr_3\psi_3}\gamma_k^2V_g^{'}.
		\end{align}}
\end{lem}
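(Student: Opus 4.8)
The plan is to prove the three estimates (\ref{comp-x-0}), (\ref{comp-z-0}) and (\ref{comp-y-0}) by one common template, since the compression-error dynamics of $\mathbf{x}$, $\mathbf{y}$ and $\mathbf{z}$ are structurally identical. First I would record the invariant $\mathbf{H}_k^{x,w}=\tilde{\mathbf{W}}\mathbf{H}_k^x$ (and its analogues for $\mathbf{y},\mathbf{z}$), which holds at $k=1$ by Line 1 of Algorithm \ref{alg:c-SIA} and propagates by induction because Lines \ref{c-4}--\ref{c-5} of Algorithm \ref{COMMA} add $\alpha_x\mathbf{Q}$ to $\mathbf{H}^x$ and $\alpha_x\tilde{\mathbf{W}}\mathbf{Q}$ to $\mathbf{H}^{x,w}$. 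This invariant yields $\breve{\mathbf{x}}_k^w=\tilde{\mathbf{W}}\breve{\mathbf{x}}_k$, hence $\breve{\mathbf{x}}_k-\breve{\mathbf{x}}_k^w=(\mathbf{I}_{nd}-\tilde{\mathbf{W}})\breve{\mathbf{x}}_k$, which is exactly what turns Line \ref{c-x} into the reformulation (\ref{alg:c-x}).

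The core step is a contraction-plus-drift decomposition. Writing $e_k^x\define\mathbf{x}_k-\mathbf{H}_k^x$ and $\mathbf{Q}_k^x=\mathcal{C}_1(e_k^x)$, the update $\mathbf{H}_{k+1}^x=\mathbf{H}_k^x+\alpha_x\mathbf{Q}_k^x$ gives
\begin{equation*}
e_{k+1}^x=(e_k^x-\alpha_x\mathbf{Q}_k^x)+(\mathbf{x}_{k+1}-\mathbf{x}_k).
\end{equation*}
For the first bracket I would prove the contraction $\mathbb{E}\|e_k^x-\alpha_x\mathbf{Q}_k^x\|^2\le(1-\alpha_xr_1\psi_1)\|e_k^x\|^2$ by writing $e_k^x-\alpha_x\mathbf{Q}_k^x=(1-\alpha_xr_1)e_k^x+\alpha_xr_1(e_k^x-\mathcal{C}_1(e_k^x)/r_1)$, applying convexity of $\|\cdot\|^2$ (legitimate since $\alpha_xr_1\in(0,1)$ from $\alpha_x<1/r_1$) and then (\ref{c1}). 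Splitting the square with Young's inequality $\|a+b\|^2\le(1+\eta)\|a\|^2+(1+\eta^{-1})\|b\|^2$ and choosing $\eta=\alpha_xr_1\psi_1$ produces simultaneously the contraction factor $1-(\alpha_xr_1\psi_1)^2$ and the drift multiplier $\frac{1+\alpha_xr_1\psi_1}{\alpha_xr_1\psi_1}$ seen in (\ref{comp-x-0}).

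Next I would bound the drift $\mathbb{E}\|\mathbf{x}_{k+1}-\mathbf{x}_k\|^2$. From (\ref{alg:c-x}), $\mathbf{x}_{k+1}-\mathbf{x}_k=-\alpha_w(\mathbf{I}_{nd}-\tilde{\mathbf{W}})\mathbf{x}_k+\alpha_w(\mathbf{I}_{nd}-\tilde{\mathbf{W}})\mathbf{E}_{k+1}^x-\alpha_k\mathbf{U}_{k+1}$, so $\|\cdot\|^2\le3(\cdots)$; for the gossip term I use $(\mathbf{I}_{nd}-\tilde{\mathbf{W}})\mathbf{x}_k=(\mathbf{I}_{nd}-\tilde{\mathbf{W}})(\mathbf{x}_k-\mathbf{1}\otimes\bar{x}_k)$ with $\normm{\mathbf{I}_{nd}-\tilde{\mathbf{W}}}\le2$, for the compression term the estimate $\mathbb{E}\|\mathbf{E}_{k+1}^x\|^2\le\breve{r}_1\|e_k^x\|^2$ with $\breve{r}_1=2r_1^2(1-\psi_1)+2(1-r_1)^2$ (derived from $\|e-\mathcal{C}_1(e)\|^2\le2(1-r_1)^2\|e\|^2+2r_1^2\|e-\mathcal{C}_1(e)/r_1\|^2$ and (\ref{c1})), and for the stochastic term Assumption \ref{ass-objective}(c) via $\mathbb{E}\|\mathbf{U}_{k+1}\|^2\le C_f\mathbb{E}\normm{\mathbf{z}_k}_F^2$. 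The key point is that the resulting self-referential drift $\frac{1+\alpha_xr_1\psi_1}{\alpha_xr_1\psi_1}12\alpha_w^2\breve{r}_1\|e_k^x\|^2$ is absorbed into the contraction once the hypothesis $\alpha_w^2\le\frac{(\alpha_xr_1\psi_1)^3}{24\breve{r}_1(1+\alpha_xr_1\psi_1)}$ is invoked, upgrading $1-(\alpha_xr_1\psi_1)^2$ to $1-\tfrac12(\alpha_xr_1\psi_1)^2$ and leaving precisely the remaining terms of (\ref{comp-x-0}).

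The estimates (\ref{comp-z-0}) and (\ref{comp-y-0}) follow the same three steps with $(\alpha_y,r_2,\psi_2,\breve{r}_2)$ and $(\alpha_z,r_3,\psi_3,\breve{r}_3)$; the only new ingredient is the extra additive term $\mathbf{G}_{k+1}-\mathbf{G}_k$ (resp.\ $\hat{\mathbf{G}}_{k+1}-\hat{\mathbf{G}}_k$) in the drift. The hard part will be bounding $\mathbb{E}\|\mathbf{G}_{k+1}-\mathbf{G}_k\|^2$: writing $\mathbf{G}_{k+1}-\mathbf{G}_k=(1-\beta_k)(G_{k+1,k+1}-G_{k+1,k})+\beta_k(G_{k+1,k+1}-\mathbf{G}_k)$, the first piece is controlled by $C_g\mathbb{E}\|\mathbf{x}_{k+1}-\mathbf{x}_k\|^2$ through the mean-square Lipschitz bound on $G_i$ and therefore recursively re-expands into the already-handled $\mathbf{x}$-drift (producing the $\alpha_k^2C_f\normm{\mathbf{z}_k}_F^2$, the $\alpha_w^2$-consensus and the $\alpha_w^2\breve{r}_1$-compression contributions), while the second piece splits as $(G_{k+1,k+1}-\mathbf{g}_{k+1})+(\mathbf{g}_{k+1}-\mathbf{g}_k)+(\mathbf{g}_k-\mathbf{G}_k)$ and is bounded by Assumption \ref{ass-objective}(d) ($\beta_k^2V_g$), the Lipschitz continuity of $g_i$, and the estimation error $\|\mathbf{G}_k-\mathbf{g}_k\|^2$. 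Tracking the conditioning on the $\sigma$-fields $\mathcal{F}_k$ so that the unbiasedness and variance statements of Assumption \ref{ass-objective}(b),(d) apply, and matching all constants to (\ref{comp-z-0})--(\ref{comp-y-0}), is the most laborious but routine part.
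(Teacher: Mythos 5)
Your proposal matches the paper's proof essentially step for step: the same convexity-based contraction $\mathbb{E}\left\|e_k^x-\alpha_x\mathcal{C}_1(e_k^x)\right\|^2\le(1-\alpha_xr_1\psi_1)\|e_k^x\|^2$, the same Young splitting with $\tau=\alpha_xr_1\psi_1$ producing the factor $1-(\alpha_xr_1\psi_1)^2$ and multiplier $\frac{1+\alpha_xr_1\psi_1}{\alpha_xr_1\psi_1}$, the same three-term drift bound via the reformulation (\ref{alg:c-x}) with $\mathbb{E}\|\mathbf{E}_{k+1}^x\|^2\le\breve{r}_1\mathbb{E}\|e_k^x\|^2$, and the same absorption of the self-referential term using the stated bound on $\alpha_w^2$, with the $\mathbf{y}$ and $\mathbf{z}$ cases handled by the Lemma~\ref{lem:tracking}-style decomposition of $\mathbf{G}_{k+1}-\mathbf{G}_k$ that the paper invokes implicitly. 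Your added verification of the invariant $\mathbf{H}_k^{x,w}=\tilde{\mathbf{W}}\mathbf{H}_k^x$ is a detail the paper leaves tacit, but it does not change the route.
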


\begin{lem}\label{lem:c-complex}
	Denote
	{\small\begin{equation}\label{c-notation-1}
		\begin{aligned}
		 &\breve{c}_1=\frac{\frac{1-\rho_w^2}{42}}{48\frac{1+\rho_w^2}{1-\rho_w^2}C_g\alpha_w^2},~\breve{c}_2=\frac{\frac{1-\rho_w^2}{42}}{48\frac{1+\rho_w^2}{1-\rho_w^2}L_g^2\alpha_w^2},~\breve{c}_3=\frac{\frac{1-\rho_w^2}{42}}{72C_g\alpha_w^2},~\breve{c}_4=\frac{\frac{1-\rho_w^2}{42}}{72pL_g^2\alpha_w^2},\\
		 &\breve{c}_5=\frac{\frac{1-\rho_w^2}{42}}{12\frac{1+\alpha_xr_1\psi_1}{\alpha_xr_1\psi_1}\alpha_w^2},~\breve{c}_6=\frac{\frac{1-\rho_w^2}{42}}{144C_g\frac{1+\alpha_yr_2\psi_2}{\alpha_yr_2\psi_2}\alpha_w^2},~\breve{c}_7=\frac{\frac{1-\rho_w^2}{42}}{144L_g^2\frac{1+\alpha_zr_3\psi_3}{\alpha_zr_3\psi_3}\alpha_w^2},\\
		&\breve{c}_8=\left(\frac{1+\rho_w^2}{1-\rho_w^2}C_f+
		12\frac{1+\rho_w^2}{1-\rho_w^2}C_gC_f\breve{c}_1
		+12\frac{1+\rho_w^2}{1-\rho_w^2}L_g^2C_f\breve{c}_2+18C_gC_f\breve{c}_3+18pL_g^2C_f\breve{c}_4\right.\\
		 &\quad\quad\left.+\frac{1+\alpha_xr_1\psi_1}{\alpha_xr_1\psi_1}3C_f\breve{c}_5+36C_g\frac{1+\alpha_yr_2\psi_2}{\alpha_yr_2\psi_2}C_f\breve{c}_6+36L_g^2\frac{1+\alpha_zr_3\psi_3}{\alpha_zr_3\psi_3}C_f\breve{c}_7\right),
		\end{aligned}
		\end{equation}	}
	{\small\begin{equation}\label{c-notation-0}
		\begin{aligned}
		\breve{a}_k&=\max\left\{\frac{2+\rho_w^2}{3}+\frac{3p\breve{c}_8\alpha_k^2}{\breve{c}_2},~(1-\beta_k)^2+6\beta_k^2,~1-\frac{(\alpha_xr_1\psi_1)^2}{4},\right.\\
		&\quad\quad\quad\quad~~~\left.(1-\gamma_k)^2+6p\gamma_k^2+\frac{3p\breve{c}_8\alpha_k^2}{\breve{c}_4},~1-\frac{(\alpha_yr_2\psi_2)^2}{4},~1-\frac{(\alpha_zr_3\psi_3)^2}{4}\right\},\quad~~\\
		\breve{b}_k&=3np\breve{c}_8C_g\alpha_k^2+\left(4\frac{1+\rho_w^2}{1-\rho_w^2}\breve{c}_1V_g+3\breve{c}_3V_g+12\frac{1+\alpha_yr_2\psi_2}{\alpha_yr_2\psi_2}V_g\right)\beta_k^2\\
		&\quad+\left(4\frac{1+\rho_w^2}{1-\rho_w^2}\breve{c}_2V_g^{'}+3\breve{c}_4V_g^{'}+12\frac{1+\alpha_zr_3\psi_3}{\alpha_zr_3\psi_3}V_g^{'}\right)\gamma_k^2,\quad~~
		\end{aligned}
		\end{equation}	}
	{\small\begin{equation}\label{c-notation-2}
		\begin{aligned}
		 \breve{V}_k&=\mathbb{E}\left[\left\|\mathbf{x}_k-\mathbf{1}\otimes\bar{x}_k\right\|^2\right]+\breve{c}_1\mathbb{E}\left[\left\|\mathbf{y}_k-\mathbf{1}\otimes\bar{y}_k\right\|^2\right]+\breve{c}_2\mathbb{E}\left[\normm{\mathbf{z}_{k}-\mathbf{1}\otimes\bar{z}_{k}}^2\right]\\
		 &\quad+\breve{c}_3\mathbb{E}\left[\|\mathbf{G}_k-\mathbf{g}_k\|^2\right]+\breve{c}_4\mathbb{E}\left[\normm{\hat{\mathbf{G}}_k-\nabla\mathbf{g}_k}_F^2\right]+\breve{c}_5\mathbb{E}\left[\left\|\mathbf{x}_{k+1}-\mathbf{H}_{k+1}^x\right\|^2\right]\\
		 &\quad+\breve{c}_6\mathbb{E}\left[\left\|\mathbf{y}_{k+1}-\mathbf{H}_{k+1}^y\right\|^2\right]+\breve{c}_7\mathbb{E}\left[\left\|\mathbf{z}_{k+1}-\mathbf{H}_{k+1}^{z}\right\|^2\right].\qquad\qquad\qquad\qquad\qquad\qquad
		\end{aligned}
		\end{equation}	}
	Suppose (a) Assumptions \ref{ass-objective}$\sim$\ref{ass:compressor} hold, (b) $\alpha_x$, $\alpha_z$, $\alpha_y$  $\alpha_k$, $\beta_k$, $\gamma_k$  and $a_k$ are positive and satisfy 
	\begin{align*}
	&\alpha_x<\frac{1}{r_1},~\alpha_z<\frac{1}{r_2},~\alpha_y<\frac{1}{r_3},~
	\alpha_k<1,~\beta_k<1,~\gamma_k<1,~a_k<1,
	\end{align*}
	(c)  $\alpha_w$ is positive and satisfies
	{\small\begin{align*}
		&4\frac{1+\rho_w^2}{1-\rho_w^2}\alpha_w^2\le \frac{1-\rho_w^2}{6},~\alpha_w\le \min\left\{\sqrt{\frac{(\alpha_xr_1\psi_1)^3}{24\breve{r}_1\left(1+\alpha_xr_1\psi_1\right)}},\sqrt{\frac{(\alpha_zr_2\psi_2)^3}{48\breve{r}_2\left(1+\alpha_zr_2\psi_2\right)}},\sqrt{\frac{(\alpha_yr_3\psi_3)^3}{48\breve{r}_3\left(1+\alpha_yr_3\psi_3\right)}}\right\},\\
		&4\alpha_w^2\left(\frac{1+\rho_w^2}{1-\rho_w^2}\frac{\breve{r}_1}{\breve{c}_5}+12\left(1-\beta_k\right)^2C_g\frac{1+\rho_w^2}{1-\rho_w^2}\breve{r}_1\frac{\breve{c}_1}{\breve{c}_5}+12\left(1-\gamma_k\right)^2L_g^2\frac{1+\rho_w^2}{1-\rho_w^2}\breve{r}_1\frac{\breve{c}_2}{\breve{c}_5}+18(1-\beta_k)^2C_g\breve{r}_1\frac{\breve{c}_3}{\breve{c}_5}\right.\\
		&\quad\left.+18(1-\gamma_k)^2pL_g^2\breve{r}_1\frac{\breve{c}_4}{\breve{c}_5}
		+36C_g\frac{1+\alpha_yr_2\psi_2}{\alpha_yr_2\psi_2}\breve{r}_1\frac{\breve{c}_6}{\breve{c}_5}
		+36L_g^2\frac{1+\alpha_zr_3\psi_3}{\alpha_zr_3\psi_3}\breve{r}_1\frac{\breve{c}_7}{\breve{c}_5}\right)\le \frac{(\alpha_xr_1\psi_1)^2}{4}.
		\end{align*}}
	 Then
	\begin{equation*}
	\breve{V}_{k+1}\le \breve{a}_k \breve{V}_k+\breve{b}_k.
	\end{equation*}	
\end{lem}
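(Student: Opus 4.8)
The plan is to mirror the proof of Lemma \ref{lem:complex}: I would substitute the one-step bounds of Lemmas \ref{lem:c-tracking}, \ref{lem:c-consensus} and \ref{lem:compression} into the definition (\ref{c-notation-2}) of $\breve V_{k+1}$, reduce every right-hand side to the error quantities that constitute $\breve V_k$ (plus the variance constants), and then verify term-by-term that the coefficient of each quantity is at most $\breve a_k$ times its weight in $\breve V_k$, while the leftover constants assemble into $\breve b_k$. Concretely, the two estimation errors are expanded by Lemma \ref{lem:c-tracking}, the three consensus errors by Lemma \ref{lem:c-consensus}, and the three compression errors by Lemma \ref{lem:compression}, the last of which recurses $\mathbf x_{k+1}-\mathbf H^x_{k+1}$ (and its $\mathbf y,\mathbf z$ analogues) directly onto the corresponding compression terms already present in $\breve V_k$.

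A preliminary reduction handles the tracker norm $\mathbb E\normm{\mathbf z_k}_F^2$, which occurs with weight proportional to $\alpha_k^2C_f$ in all eight bounds. I would eliminate it using $\mathbb E\normm{\mathbf z_k}_F^2\le 3p\big(\mathbb E\normm{\mathbf z_k-\mathbf 1\otimes\bar z_k}^2+\mathbb E\normm{\hat{\mathbf G}_k-\nabla\mathbf g_k}_F^2+nC_g\big)$ derived inside the proof of Theorem \ref{thm:rate}; this routes the tracker contributions into the $\mathbf z$-consensus weight and the gradient-estimation weight and produces the additive constant $3np\breve c_8C_g\alpha_k^2$, the first summand of $\breve b_k$. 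The aggregate of the eight (weighted) tracker coefficients is exactly the constant $\breve c_8$ of (\ref{c-notation-1}), which is why $\breve a_k$ carries the terms $\tfrac{3p\breve c_8\alpha_k^2}{\breve c_2}$ and $\tfrac{3p\breve c_8\alpha_k^2}{\breve c_4}$.

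The heart of the computation is the coefficient of $\mathbb E\|\mathbf x_k-\mathbf 1\otimes\bar x_k\|^2$. Besides the contraction factor $\tfrac{1+\rho_w^2}{2}$ coming from the $\mathbf x$-consensus recursion, each of the other seven recursions contributes a cross term proportional to $\alpha_w^2$. The constants $\breve c_1,\dots,\breve c_7$ in (\ref{c-notation-1}) are reverse-engineered so that each such cross term is at most $\tfrac{1-\rho_w^2}{42}$; since $7\cdot\tfrac{1-\rho_w^2}{42}=\tfrac{1-\rho_w^2}{6}$, the total is at most $\tfrac{1+\rho_w^2}{2}+\tfrac{1-\rho_w^2}{6}=\tfrac{2+\rho_w^2}{3}$, the leading entry of $\breve a_k$ in (\ref{c-notation-0}). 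The analogous, easier accounting for the estimation errors produces the entries $(1-\beta_k)^2+6\beta_k^2$ and $(1-\gamma_k)^2+6p\gamma_k^2+\tfrac{3p\breve c_8\alpha_k^2}{\breve c_4}$, while the $V_g,V_g^{'}$ terms left over from the variance-reduction and compression steps supply the $\beta_k^2$- and $\gamma_k^2$-parts of $\breve b_k$.

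The main obstacle is the three compression coefficients, whose contraction rate $1-\tfrac{(\alpha_\bullet r_\bullet\psi_\bullet)^2}{2}$ is far weaker than the consensus rate and is, moreover, fed back into by every consensus and estimation recursion through the $\breve r_1\,\mathbb E\|\mathbf x_k-\mathbf H_k^x\|^2$ terms of weight $\propto\alpha_w^2$. I would show that hypothesis (c) is exactly calibrated to this difficulty: its long displayed inequality states precisely that the aggregate of these feedback terms, scaled by $\breve c_5^{-1}$, is at most $\tfrac{(\alpha_x r_1\psi_1)^2}{4}$, so the $\mathbf x$-compression contraction is degraded only from $1-\tfrac{(\alpha_x r_1\psi_1)^2}{2}$ to $1-\tfrac{(\alpha_x r_1\psi_1)^2}{4}$ — the value appearing in $\breve a_k$ — with the symmetric conclusion for the $\mathbf y$- and $\mathbf z$-compression errors, while its first inequality $4\tfrac{1+\rho_w^2}{1-\rho_w^2}\alpha_w^2\le\tfrac{1-\rho_w^2}{6}$ keeps the residual direct $\alpha_w^2$-feedback within the consensus budget. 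Taking the maximum of the per-quantity coefficient bounds then yields $\breve a_k$, and collecting the constants yields $\breve b_k$, which closes the recursion $\breve V_{k+1}\le\breve a_k\breve V_k+\breve b_k$.
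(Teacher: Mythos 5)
Your proposal is correct and takes essentially the same route as the paper: the paper's own proof of this lemma is the single line ``similar to Lemma \ref{lem:complex}'', i.e., exactly the weighted-sum recursion you describe, and your bookkeeping checks out — the seven cross contributions into the $\mathbf{x}$-consensus coefficient each at most $\tfrac{1-\rho_w^2}{42}$ (summing to $\tfrac{1-\rho_w^2}{6}$, hence $\tfrac{2+\rho_w^2}{3}$), the elimination of $\mathbb{E}\left[\normm{\mathbf{z}_k}_F^2\right]$ via the $3p$-bound producing $\breve{c}_8$ and the terms $\tfrac{3p\breve{c}_8\alpha_k^2}{\breve{c}_2}$, $\tfrac{3p\breve{c}_8\alpha_k^2}{\breve{c}_4}$, and hypothesis (c) absorbing the $\breve{r}_1$-feedback so the compression contractions degrade only to $1-\tfrac{(\alpha_xr_1\psi_1)^2}{4}$. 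One minor remark: for the recursion to close, the compression errors in (\ref{c-notation-2}) must be read at the same time index $k$ as the other error terms (the $k+1$ indexing printed there is evidently a typo), which is what your argument implicitly and correctly does.
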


Similar to Lemma \ref{lem:complex}, Lemma \ref{lem:compression} is
 a technical result, which characterizes   a recursive inequality relation of the compression errors and the   estimating errors of   hybrid variance reduction method  and
the  dynamic average consensus method.

We are ready to study the  convergence rate of CD-ASCGD.
\begin{thm}\label{thm:c-rate}
	Let $\alpha_k=\frac{s_1}{\sqrt{K}}$,  $\beta_k=\frac{s_2}{\sqrt{K}}$, $\gamma_k=\frac{s_3}{\sqrt{K}}$. Under the conditions of Lemma \ref{lem:c-complex}, 
	\begin{align*}
	&\frac{1}{K}\sum_{k=1}^K\mathbb{E}\left[\|\nabla h(x_{i,k})\|^2\right]\le \frac{8\mathbb{E}\left[h(\bar{x}_1)\right]-\mathbb{E}\left[h(\bar{x}_{K+1})\right]}{s_1\sqrt{K}}+\frac{24pLC_fC_gs_1}{n\sqrt{K}}+\frac{8\breve{c}_9\left( \breve{V}_1+\breve{b}\right)}{K(1-\breve{a}_K)}, ~ \forall i\in\mathcal{V},
	\end{align*}
	where $L=C_gL_f + C_f^{1/2}L_g$,
	\begin{align}
	 \label{c-b}&\breve{b}=3np\breve{c}_8C_gs_1^2+\left(4\frac{1+\rho_w^2}{1-\rho_w^2}\breve{c}_1V_g+3\breve{c}_3V_g+12\frac{1+\alpha_yr_2\psi_2}{\alpha_yr_2\psi_2}V_g\right)s_2^2\\
	 &\quad+\left(4\frac{1+\rho_w^2}{1-\rho_w^2}\breve{c}_2V_g^{'}+3\breve{c}_4V_g^{'}+12\frac{1+\alpha_zr_3\psi_3}{\alpha_zr_3\psi_3}V_g^{'}\right)s_3^2,\\
	&\breve{c}_9=\max\left\{2\left(\frac{C_g^2L_f^2}{n}+\frac{C_fL_g^2}{n}+\frac{L^2}{8}\right),~\frac{2 C_gL_f^2}{n\min\{\breve{c}_1,\breve{c}_3\}},~\frac{\left(4C_f+3LC_f\right)p}{2\min\{\breve{c}_2,\breve{c}_4\}n}\right\}.\notag
	\end{align}
\end{thm}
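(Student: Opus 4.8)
The plan is to mirror the proof of Theorem~\ref{thm:rate} almost verbatim, since compression only modifies the consensus and tracking dynamics and leaves the averaged iterate untouched. The first step I would take is to verify that averaging the compressed recursion~(\ref{alg:c-x}) still yields $\bar{x}_{k+1}=\bar{x}_k-\alpha_k\bar{U}_{k+1}$: because $\mathbf{W}$ is doubly stochastic, $\frac{\mathbf{1}^\intercal}{n}\tilde{\mathbf{W}}=\frac{\mathbf{1}^\intercal}{n}$, so the operator $(1-\alpha_w)\mathbf{I}_{nd}+\alpha_w\tilde{\mathbf{W}}$ averages to the identity while the compression-error term $\alpha_w[\mathbf{I}_{nd}-\tilde{\mathbf{W}}]\mathbf{E}_{k+1}^x$ is annihilated by $\frac{\mathbf{1}^\intercal}{n}\otimes\mathbf{I}_d$. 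Likewise, averaging~(\ref{alg:c-y}) and~(\ref{alg:c-z}) together with the initializations $\mathbf{y}_1=\mathbf{G}_1$, $\mathbf{z}_1=\hat{\mathbf{G}}_1$ preserves the identities $\bar{y}_k=\frac{1}{n}\sum_{j=1}^nG_{j,k}$ and $\bar{z}_k=\frac{1}{n}\sum_{j=1}^n\hat{G}_{j,k}$ used in the footnote of Theorem~\ref{thm:rate}.

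With these relations in hand, the descent analysis transfers unchanged. Using the $L$-smoothness of $\nabla h$ I would reproduce inequality~(\ref{grad-bound-1}), bound $\frac{L}{2}\mathbb{E}[\|\alpha_k\bar{U}_{k+1}\|^2]$ exactly as in~(\ref{U-bound}), and decompose the cross term into $P_1,\dots,P_7$ as in~(\ref{grad-bound-3-0}). The conditional-expectation argument that kills the $P_7$ term and the Cauchy--Schwarz and Lipschitz bounds producing~(\ref{grad-bound-3}) depend only on the definition of $\bar{U}_{k+1}$ and on the averaging identities above, none of which involve the compression. Choosing $\tau=\frac{2\alpha_k}{3}$ and invoking the Lipschitz bound to replace $\mathbb{E}[\|\nabla h(\bar{x}_k)\|^2]$ by $\mathbb{E}[\|\nabla h(x_{i,k})\|^2]$ then yields the analogue of~(\ref{grad-bound-5}),
\begin{equation*}
\mathbb{E}\left[h(\bar{x}_{k+1})\right]\le \mathbb{E}\left[h(\bar{x}_k)\right]-\frac{\alpha_k}{8}\mathbb{E}\left[\|\nabla h(x_{i,k})\|^2\right]+3p\frac{L\alpha_k^2}{n}C_fC_g+\breve{c}_9\alpha_k\breve{V}_k,
\end{equation*}
where the five consensus and tracking error terms that appeared in~(\ref{h-bound}) are absorbed into $\breve{V}_k$ through the definition of $\breve{c}_9$. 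The key observation is that $\breve{c}_9$ has the same $\max$-structure as $c_6$ with $\breve{c}_1,\dots,\breve{c}_4$ in place of $c_1,\dots,c_4$, so the same coefficient matching works; moreover the three extra compression-error terms carried inside $\breve{V}_k$ are nonnegative, so the inequality only loosens.

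The final step is the summation-and-decay argument. Reordering and summing the display above over $k=1,\dots,K$ gives the analogue of~(\ref{grad-bound-6}). Under the stepsizes $\alpha_k=\frac{s_1}{\sqrt{K}}$, $\beta_k=\frac{s_2}{\sqrt{K}}$, $\gamma_k=\frac{s_3}{\sqrt{K}}$, Lemma~\ref{lem:c-complex} guarantees $\breve{a}_k=\breve{a}<1$ and $\breve{b}_k=\breve{b}/K$ with $\breve{b}$ as in~(\ref{c-b}), so unrolling $\breve{V}_{k+1}\le\breve{a}_k\breve{V}_k+\breve{b}_k$ yields $\breve{V}_k\le\breve{a}^{k-1}\breve{V}_1+\frac{\breve{b}}{K(1-\breve{a})}$. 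Multiplying by $\frac{8}{s_1\sqrt{K}}$ and summing the geometric series $\sum_{k}\breve{a}^{k-1}\le\frac{1}{1-\breve{a}}$ then produces the stated bound, completing the proof.

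I expect the genuine content of the theorem to reside in Lemmas~\ref{lem:c-tracking}--\ref{lem:c-complex} rather than in the theorem's own proof; the main obstacle there, and the only place where compression truly intervenes, is controlling the compression-error recursions of Lemma~\ref{lem:compression} and checking that the intricate smallness conditions on $\alpha_w$ in Lemma~\ref{lem:c-complex} drive the composite contraction factor $\breve{a}_k$ below $1$. Given those lemmas, the theorem's proof is a routine transcription of Theorem~\ref{thm:rate}, and the single delicate point is the first-paragraph verification that the averaged dynamics are insensitive to compression.
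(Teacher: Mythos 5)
Your proposal is correct and follows essentially the same route as the paper, whose proof of Theorem \ref{thm:c-rate} simply transcribes the argument for Theorem \ref{thm:rate}: it asserts the descent inequality with $\breve{c}_9$ and $\breve{V}_k$ ``similar to the analysis of (\ref{grad-bound-5})'', unrolls $\breve{V}_{k+1}\le\breve{a}\breve{V}_k+\breve{b}/K$ from Lemma \ref{lem:c-complex}, and sums the geometric series exactly as you do. Your explicit verification that $\frac{\mathbf{1}^\intercal}{n}\otimes\mathbf{I}_d$ annihilates the compression-error terms $\alpha_w\left[\mathbf{I}_{nd}-\tilde{\mathbf{W}}\right]\mathbf{E}_{k+1}^{x}$, so that $\bar{x}_{k+1}=\bar{x}_k-\alpha_k\bar{U}_{k+1}$ and the identities $\bar{y}_k=\frac{1}{n}\sum_{j=1}^{n}G_{j,k}$, $\bar{z}_k=\frac{1}{n}\sum_{j=1}^{n}\hat{G}_{j,k}$ survive compression, is left implicit in the paper, and your unrolled bound $\breve{a}^{k-1}\breve{V}_1+\frac{\breve{b}}{K(1-\breve{a})}$ is the intended one (the paper's corresponding display misprints $\breve{b}/K$ as $\breve{a}/K$).
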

\begin{proof}
	Similar to the analysis of (\ref{grad-bound-5}) in the proof of Theorem \ref{thm:rate}, we have
	{\small\begin{align}\label{c-grad-bound-5}
		\mathbb{E}\left[h(\bar{x}_{k+1})\right]&\le \mathbb{E}\left[h(\bar{x}_k)\right]-\frac{\alpha_k}{8}\mathbb{E}\left[\|\nabla h(x_{i,k})\|^2\right]+3p\frac{L\alpha_{k}^2}{n}C_fC_g+\breve{c}_9\alpha_k \breve{V}_k, 
		\end{align}}
	where
	\begin{align*}
	\breve{c}_9=\max\left\{2\left(\frac{C_g^2L_f^2}{n}+\frac{C_fL_g^2}{n}+\frac{L^2}{8}\right),~\frac{2 C_gL_f^2}{n\min\{\breve{c}_1,\breve{c}_3\}},~\frac{\left(4C_f+3LC_f\right)p}{2\min\{\breve{c}_2,\breve{c}_4\}n}\right\},
	\end{align*}
	$\breve{c}_1,\cdots, \breve{c}_4$ are defined in (\ref{c-notation-1}), $\breve{V}_k$ is defined in (\ref{c-notation-2}). Reordering the terms of (\ref{c-grad-bound-5}) and summing over $k$ from 1 to $K$,
	\begin{align}
	\sum_{k=1}^K\frac{\alpha_k}{8}\mathbb{E}\left[\|\nabla h(x_{i,k})\|^2\right]
	\label{c-grad-bound-6}&\le \mathbb{E}\left[h(\bar{x}_1)\right]-\mathbb{E}\left[h(\bar{x}_{K+1})\right]+\frac{3pLC_fC_g}{n}\sum_{k=1}^K\alpha_k^2+\breve{c}_9\sum_{k=1}^K\alpha_{k}\breve{V}_k.
	\end{align}
	Note that definitions of $\alpha_k$, $\beta_k$, $\gamma_k$ could guarantee $\breve{a}_k=\breve{a}<1$,\footnote{$\breve{a}$ is dependent on $K$.} $\breve{b}_k=\breve{b}/K$ where $\breve{b}$ is defined in (\ref{c-b}). Then by Lemma \ref{lem:complex},
	\begin{equation*}
	\breve{V}_k\le \breve{a}_{k-1}\breve{V}_{k-1}+b_{k-1}=\breve{a}\breve{V}_{k-1}+\breve{a}/K\le \cdots \le \breve{a}^{k-1} \breve{V}_1+(\breve{a}/K)\sum_{t=0}^{k-2} \breve{a}^t \le \breve{a}^{k-1} \breve{V}_1+\frac{\breve{a}}{K(1-\breve{a})}.
	\end{equation*}
	Substituting the above inequality in to (\ref{c-grad-bound-6}) and multiplying both sides of (\ref{c-grad-bound-6}) by $\frac{8}{s_1\sqrt{K}}$,
	\begin{align}
	\frac{1}{K}\sum_{k=1}^K\mathbb{E}\left[\|\nabla h(x_{i,k})\|^2\right]&\le \frac{8\mathbb{E}\left[h(\bar{x}_1)\right]-\mathbb{E}\left[h(\bar{x}_{K+1})\right]}{s_1\sqrt{K}}+\frac{24pLC_fC_gs_1}{n\sqrt{K}}+\frac{8\breve{c}_9}{K}\sum_{k=1}^K\left(\breve{a}^{k-1} \breve{V}_1+\frac{\breve{b}}{K(1-\breve{a})}\right)\notag\\
	\label{rate-2}&\le \frac{8\mathbb{E}\left[h(\bar{x}_1)\right]-\mathbb{E}\left[h(\bar{x}_{K+1})\right]}{s_1\sqrt{K}}+\frac{24pLC_fC_gs_1}{n\sqrt{K}}+\frac{8\breve{c}_9\left( \breve{V}_1+\breve{b}\right)}{K(1-\breve{a})}.
	\end{align}
	The proof is complete.
\end{proof}

 Similar with D-ASCGD, CD-ASCGD  achieves the optimal  convergence rate  $\mathcal{O}\left(1/\sqrt{K}\right)$.
 The only difference is the third term  $\frac{8\breve{c}_9\left( \breve{V}_1+\breve{b}\right)}{K(1-\breve{a})}$  in (\ref{rate-2})  and the third term $\frac{8c_6(V_1+b)}{K(1-a)}$
 in (\ref{rate-1}).
By the definition of $\breve{a},\breve{V}_1,\breve{b},$ and the fact $\rho_w<1$,     the order of magnitude of  $\frac{8\breve{c}_9\left( \breve{V}_1+\breve{b}\right)}{K(1-\breve{a})}$  is
\begin{equation*}
\mathcal{O}\left(\frac{\max\left\{\frac{1}{1-\rho_w^2},\frac{1}{(\alpha_xr_1\psi_1)^2},\frac{1}{(\alpha_yr_2\psi_2)^2},\frac{1}{(\alpha_zr_3\psi_3)^2},\sqrt{K}\right\}}{K}\right).
\end{equation*}
By the similar analysis,  the order of magnitude of  $\frac{8c_6(V_1+b)}{K(1-a)}$  is
\begin{equation*}
\mathcal{O}\left(\frac{\max\left\{\frac{1}{1-\rho^2},\sqrt{K}\right\}}{K}\right).
\end{equation*}
It is easy to observe that the order of magnitude of the two terms is
 $\mathcal{O}\left(1/\sqrt{K}\right)$.

\section{Experimental results}\label{sec:num}

We consider a multi-agent Markov Decision Processes problem arising in reinforcement learning \cite{yang2022bilevel}
\begin{equation}\label{RL}
\min_{x\in\mathbb{R}^d}\frac{1}{2|\mathcal{S}|}\sum_{s=1}^{|\mathcal{S}|}\left(\phi_s^\intercal x-\frac{1}{n}\sum_{j=1}^n\mathbb{E}_{s^{'}}\left[r^{(j)}_{s,s^{'}}+\gamma\phi_{s^{'}}^\intercal x\big|s\right]\right)^2+\frac{\lambda}{2}\|x\|^2,
\end{equation}
where $\mathcal{S}$ is a finite state space,  $\phi_s\in\mathbb{R}^d$ is the state feature, $r^{(j)}_{s,s^{'}}$ denotes the random reward of transition from $s$ to $s^{'}$ for agent $j$, $\gamma\in(0,1)$ is a discount factor, $\lambda$ is the coefficient for the $l_2$-regular.
In the setting of federated learning, each agent $j$ has
	access to its own data with heterogeneous random reward $r^{(j)}_{s,s^{'}}$ for any $s,s^{'}\in\mathcal{S}$. Denote
\begin{align*}
&g_j(x)=\left(x^\intercal,\mathbb{E}_{s^{'}}\left[r^{(j)}_{s,s^{'}}+\gamma\phi_{s^{'}}^\intercal x\big|s=1\right],\cdots,\mathbb{E}_{s^{'}}\left[r^{(j)}_{s,s^{'}}+\gamma\phi_{s^{'}}^\intercal x\big|s=|\mathcal{S}|\right]\right)^\intercal,\\
&f_j(y)=\frac{1}{2|\mathcal{S}|}\sum_{s=1}^{|\mathcal{S}|}\left(\phi_s^\intercal y(1:d)-y(d+s)\right)^2+\frac{\lambda}{2}\left\|y(1:d)\right\|^2,
\end{align*}
where $y(d)$ and $y(1:d)$ are the $(d)$-th component and the first to $d$-th components of vector $y$ respectively. Then problem (\ref{RL}) can be reformulated  as the form of problem (\ref{model}).
\begin{figure}[htbp]
	\centering
	\subfigure[exponential network, n=6]{
		\includegraphics[width=1.9in]{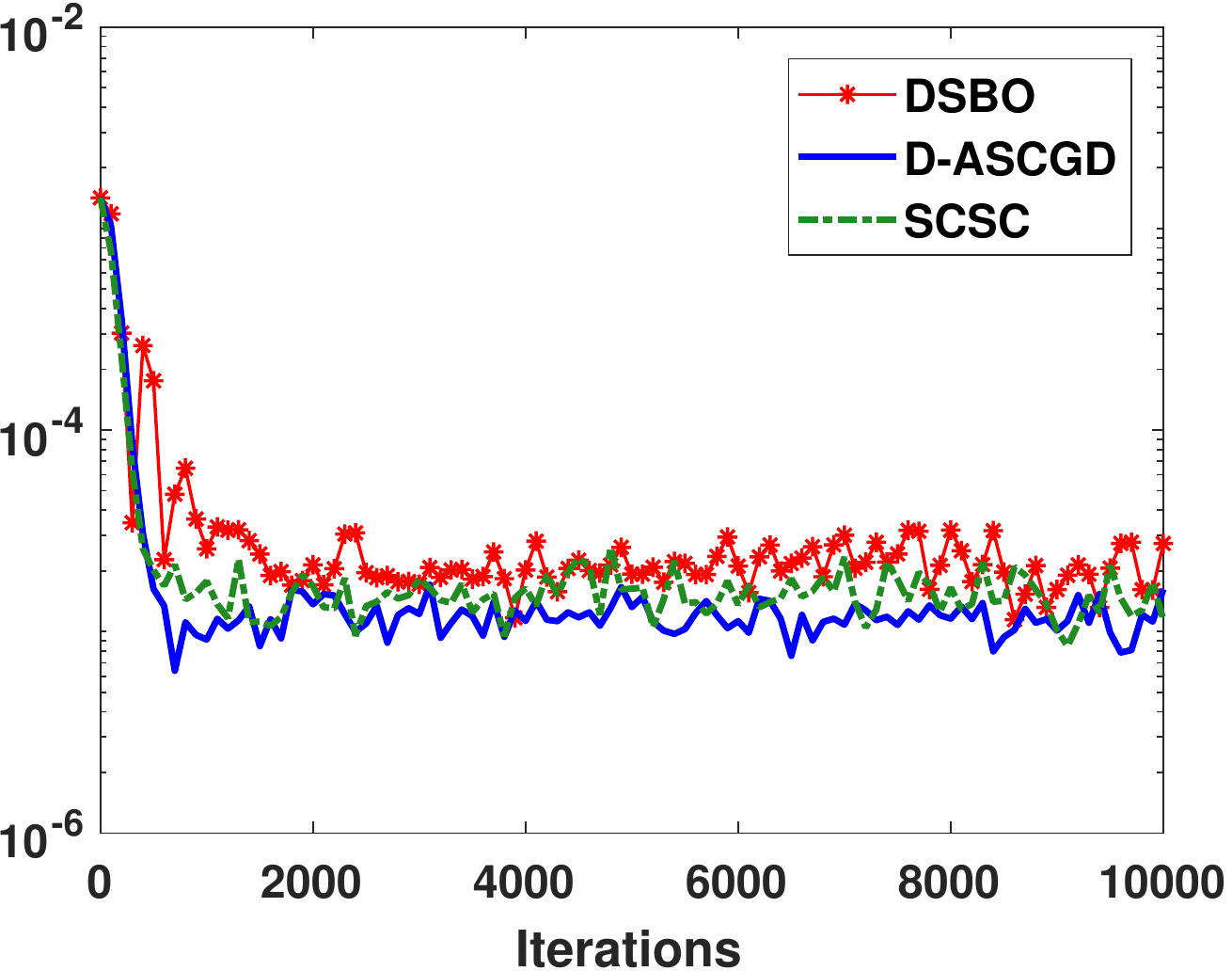}
	}
	\subfigure[exponential network, n=12]{
		\includegraphics[width=1.9in]{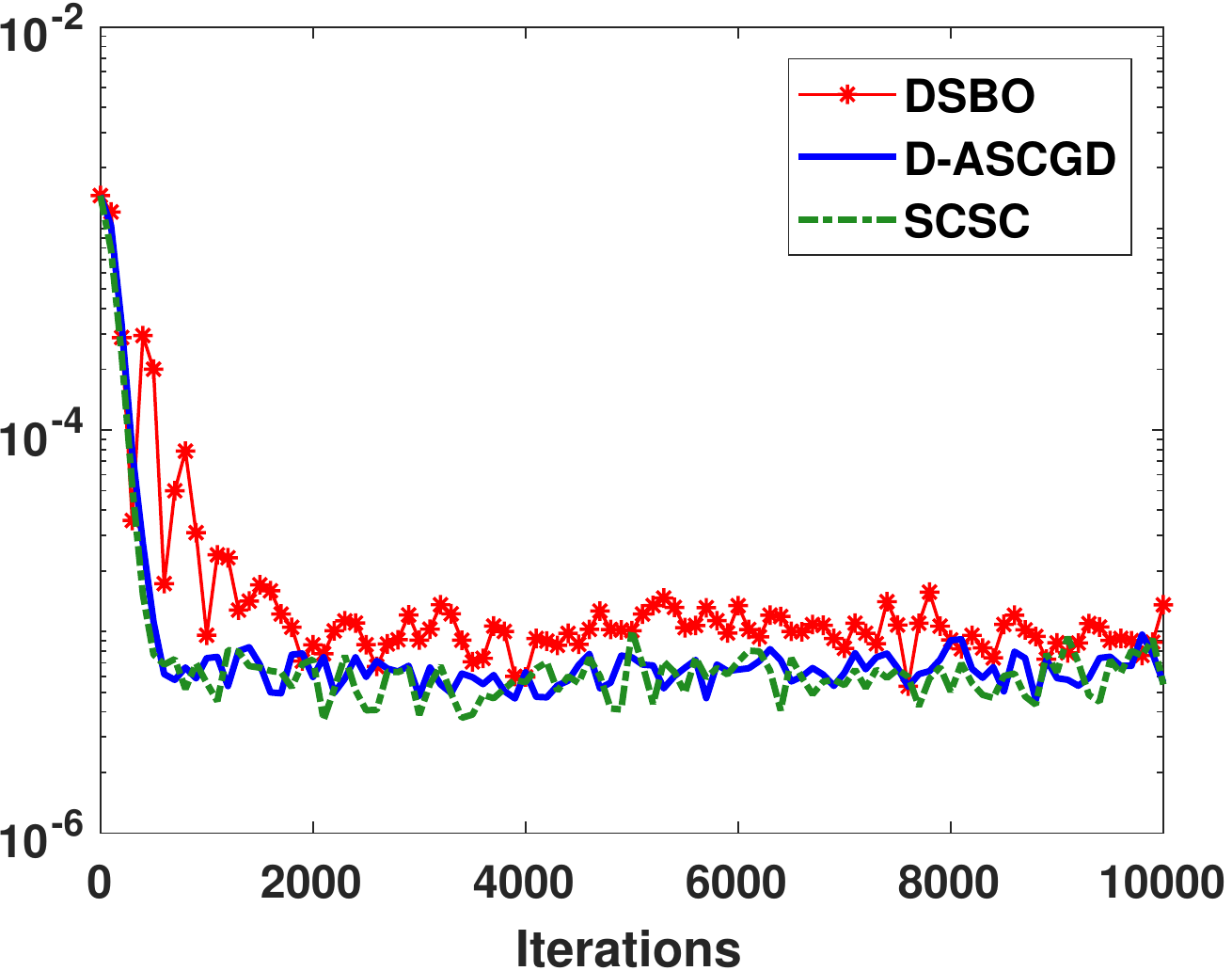}
	}
	\subfigure[exponential network, n=24]{
		\includegraphics[width=1.9in]{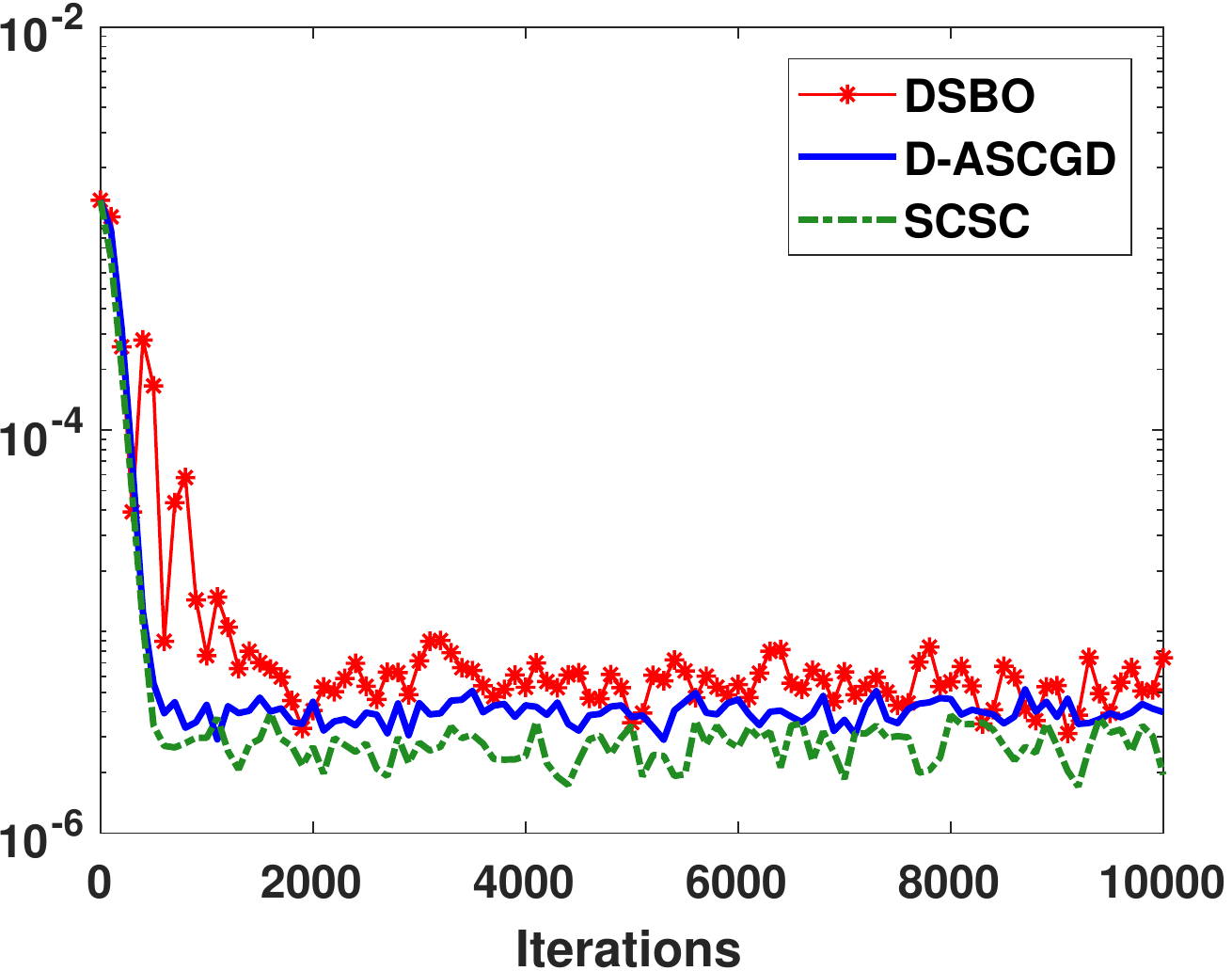}
	}
	\quad    
	\subfigure[ring network, n=6]{
		\includegraphics[width=1.9in]{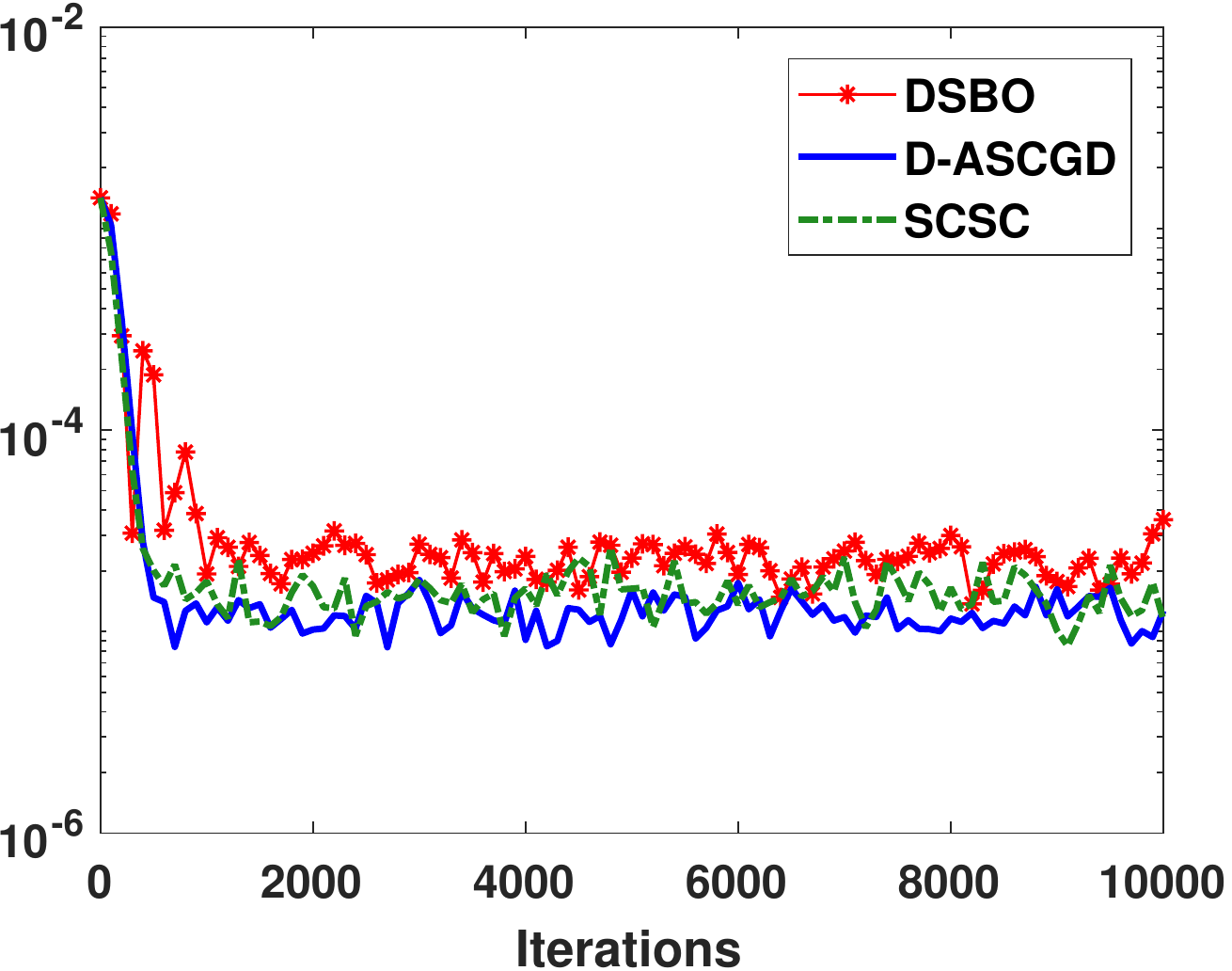}
	}
	\subfigure[ring network, n=12]{
		\includegraphics[width=1.9in]{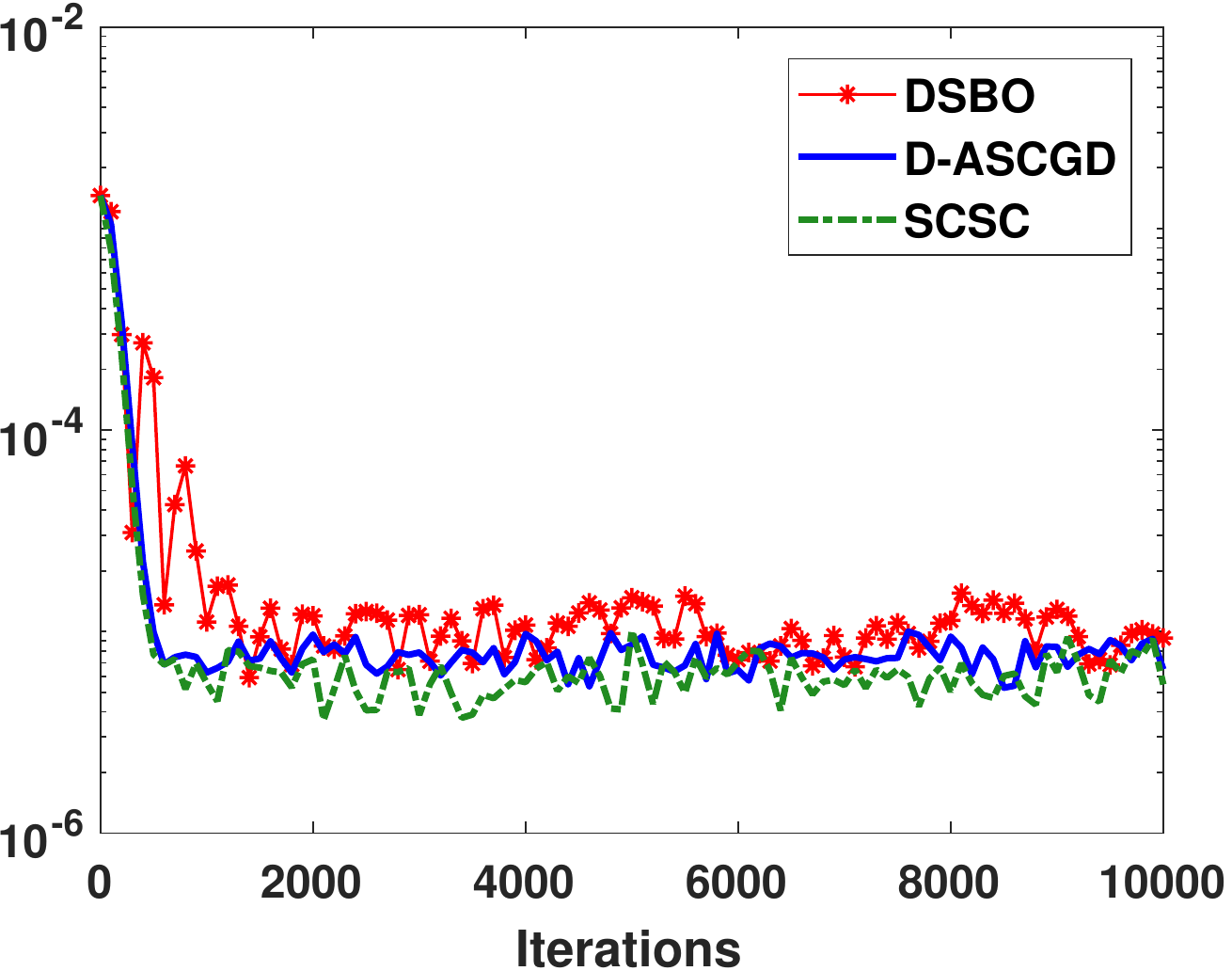}
	}
	\subfigure[ring network, n=24]{
		\includegraphics[width=1.9in]{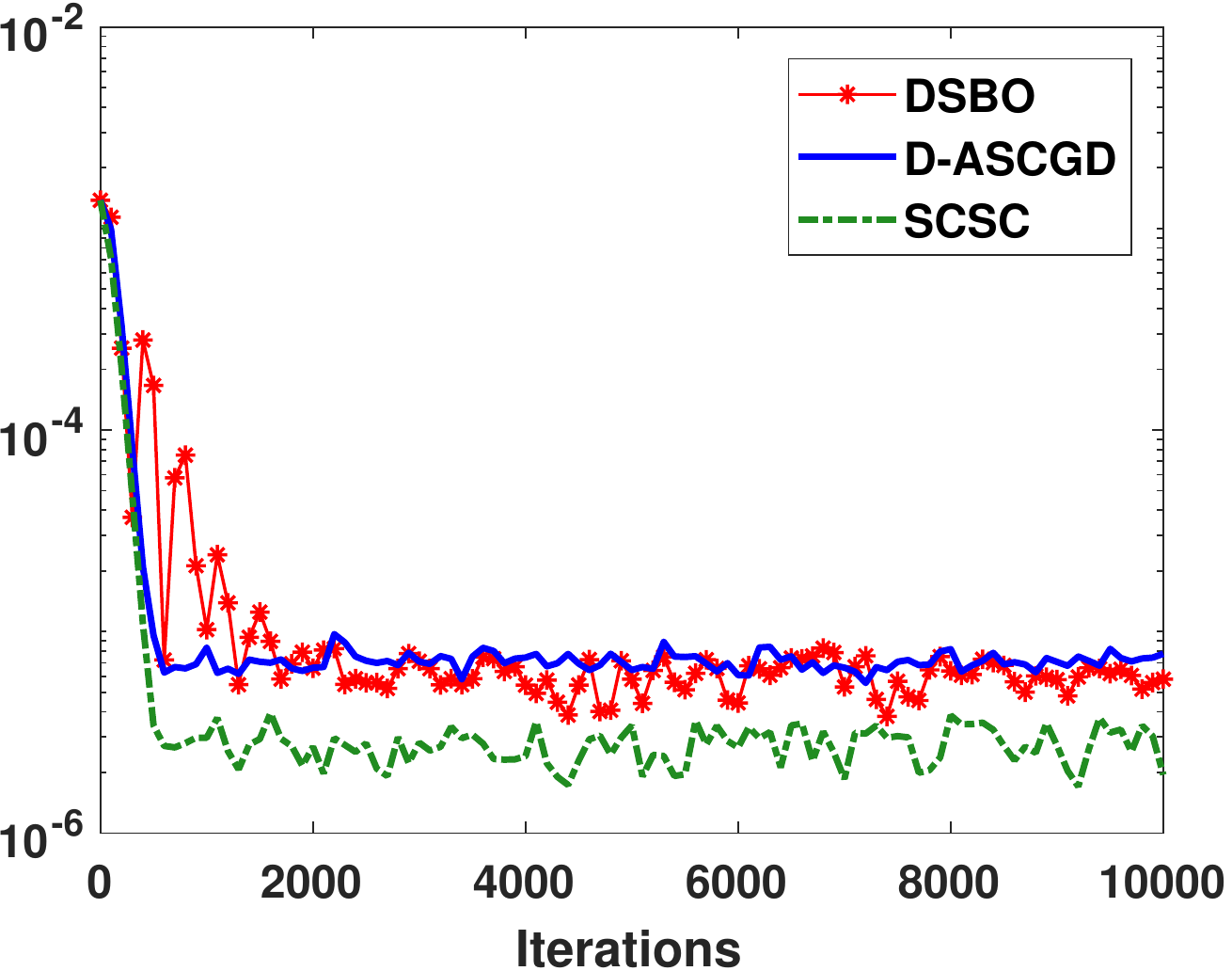}
	}
	\caption{Evolutions of $\frac{1}{n}\sum_{j=1}^n \left(h(x_{j,k})-h(x^*)\right)$   w.r.t the number of iterations.
	}
	\label{fig-5}
\end{figure}

\begin{figure}[htbp]
	\centering
	\subfigure[exponential network, n=6]{
		\includegraphics[width=1.9in]{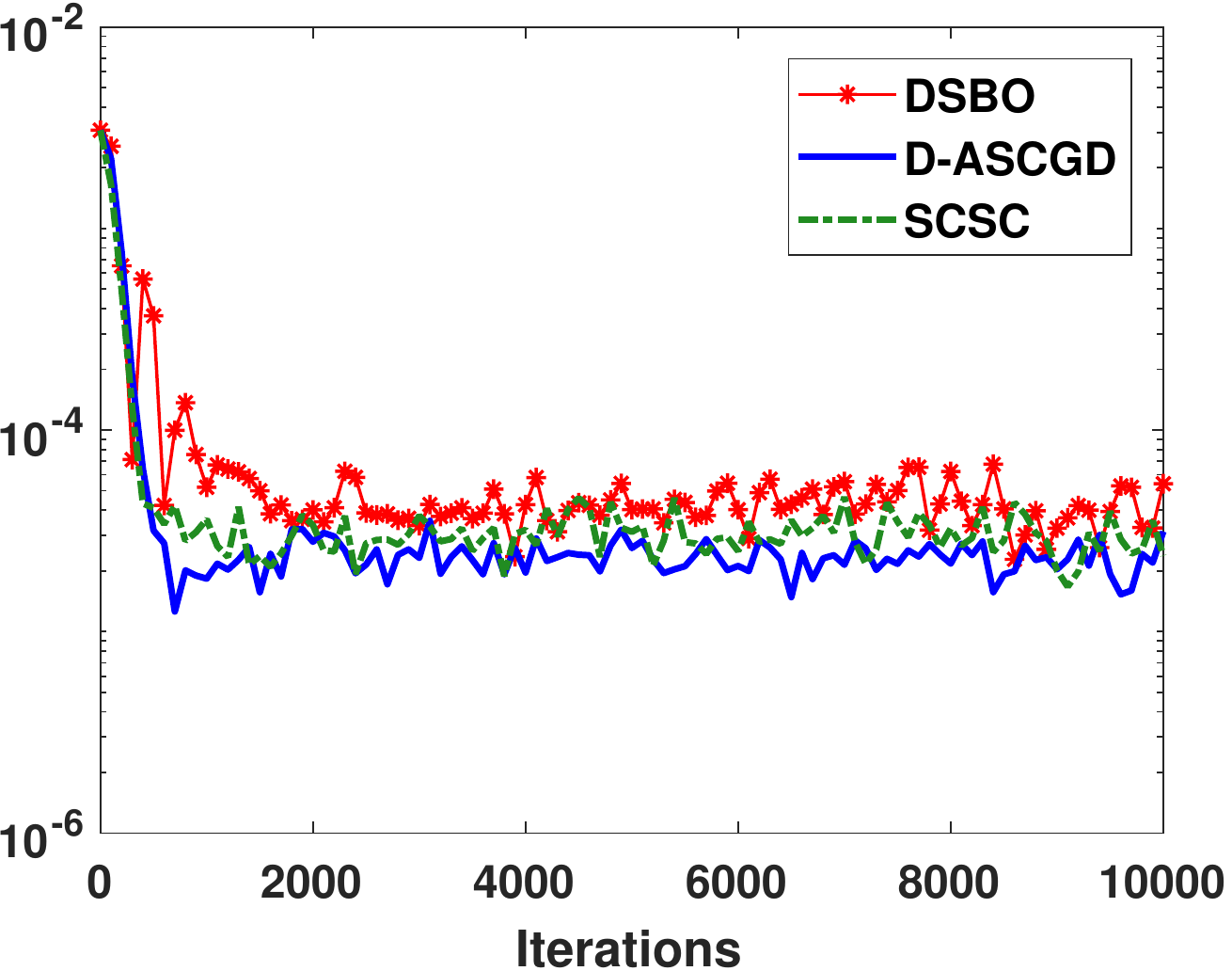}
	}
	\subfigure[exponential network, n=12]{
		\includegraphics[width=1.9in]{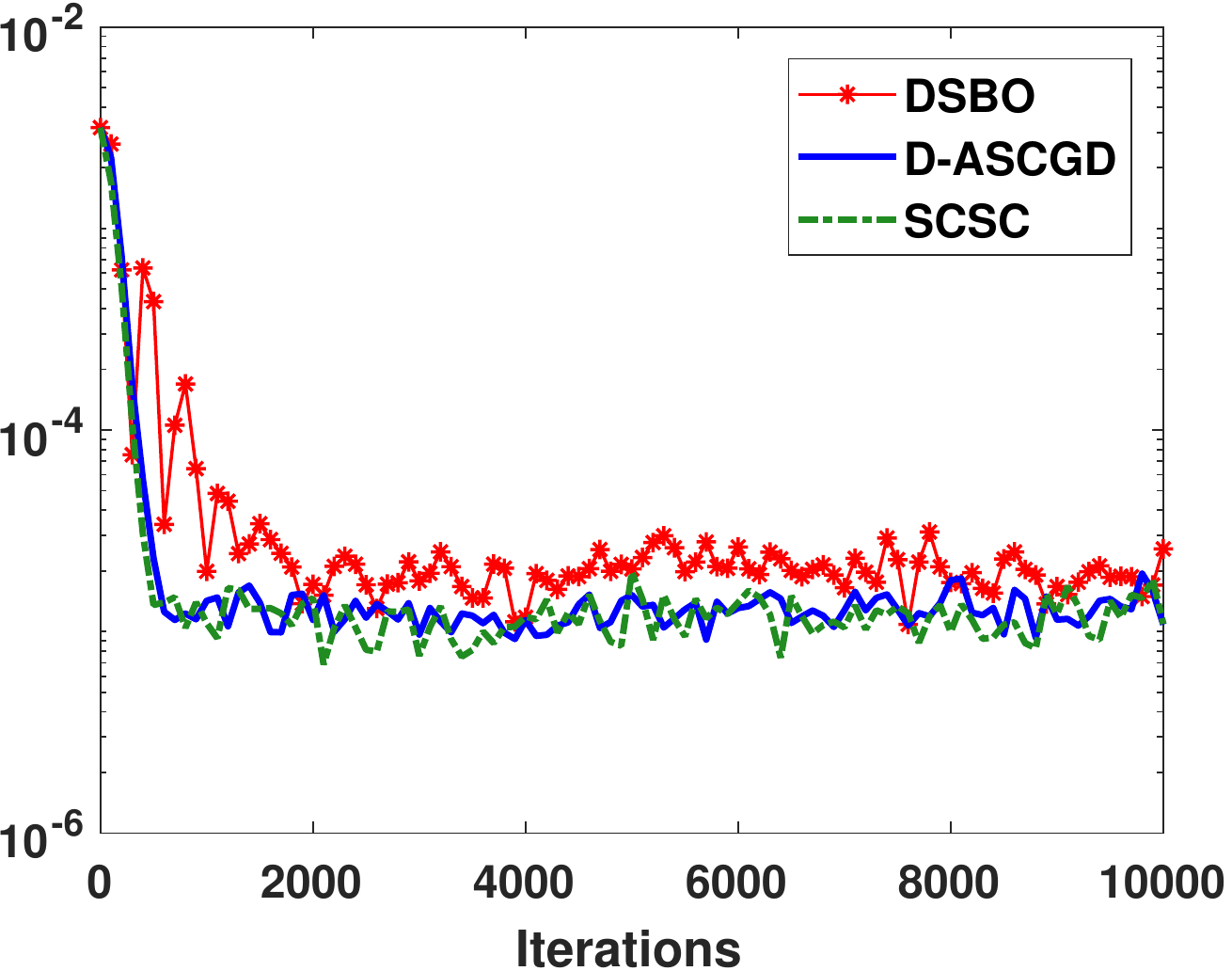}
	}
	\subfigure[exponential network, n=24]{
		\includegraphics[width=1.9in]{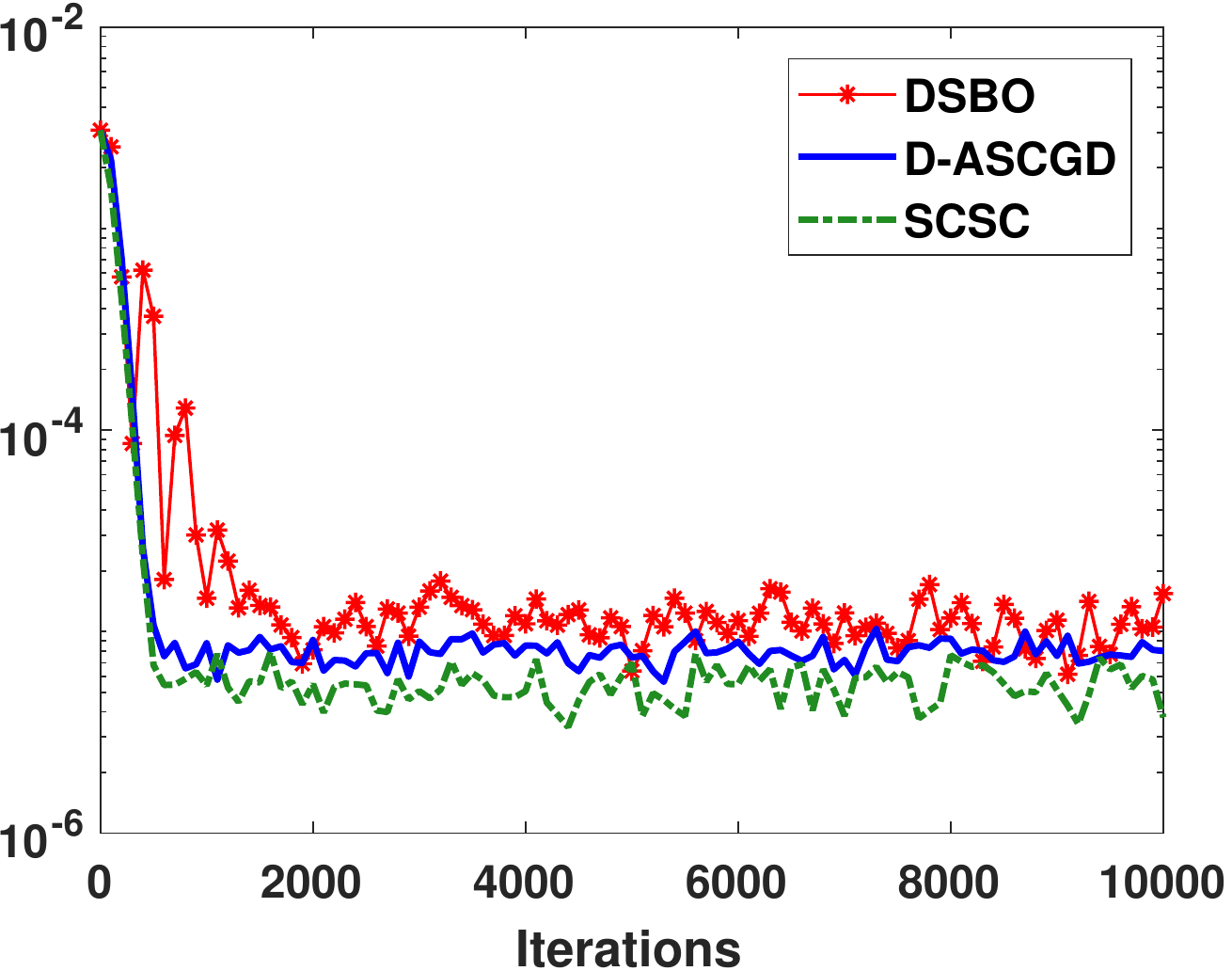}
	}
	\quad    
	\subfigure[ring network, n=6]{
		\includegraphics[width=1.9in]{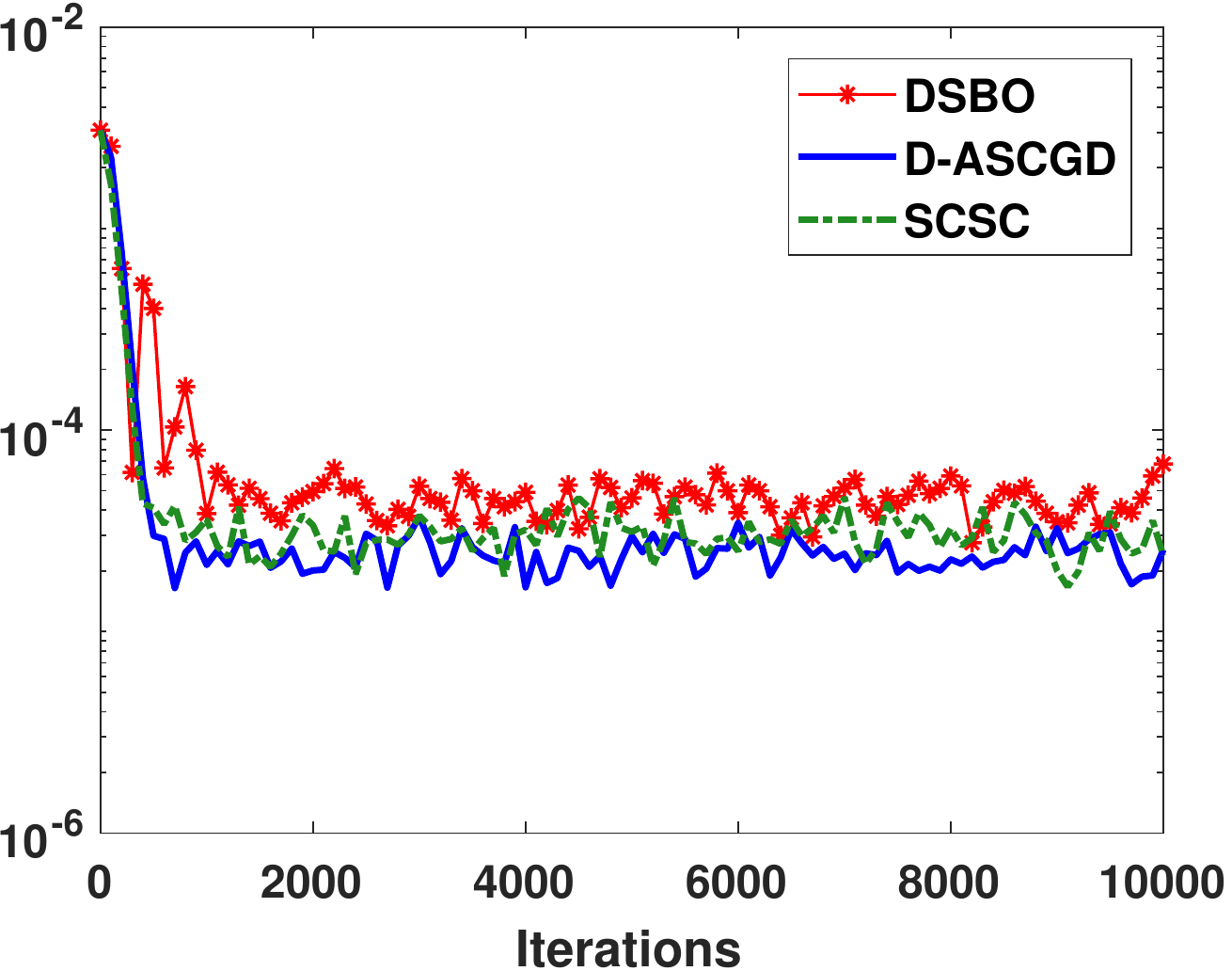}
	}
	\subfigure[ring network, n=12]{
		\includegraphics[width=1.9in]{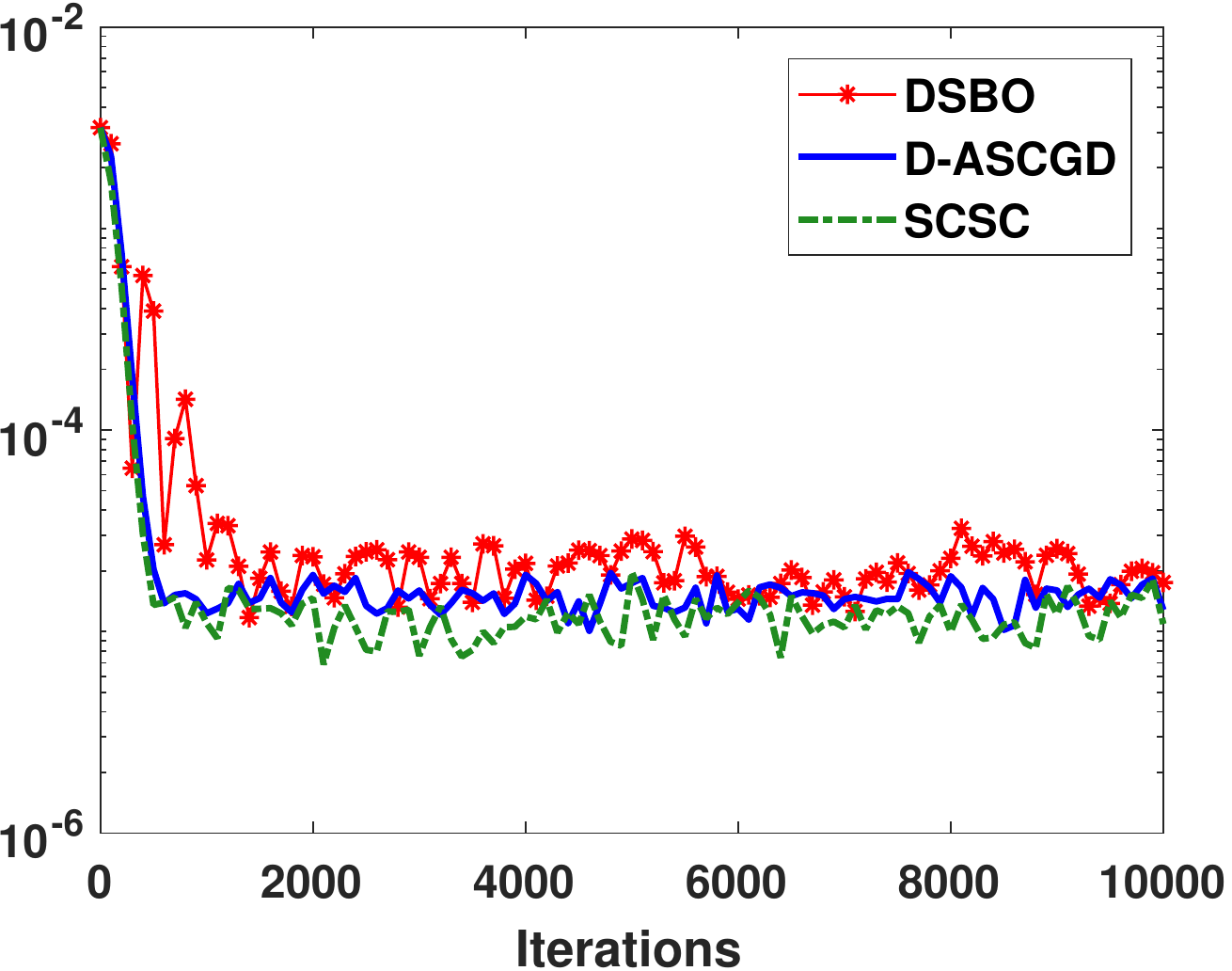}
	}
	\subfigure[ring network, n=24]{
		\includegraphics[width=1.9in]{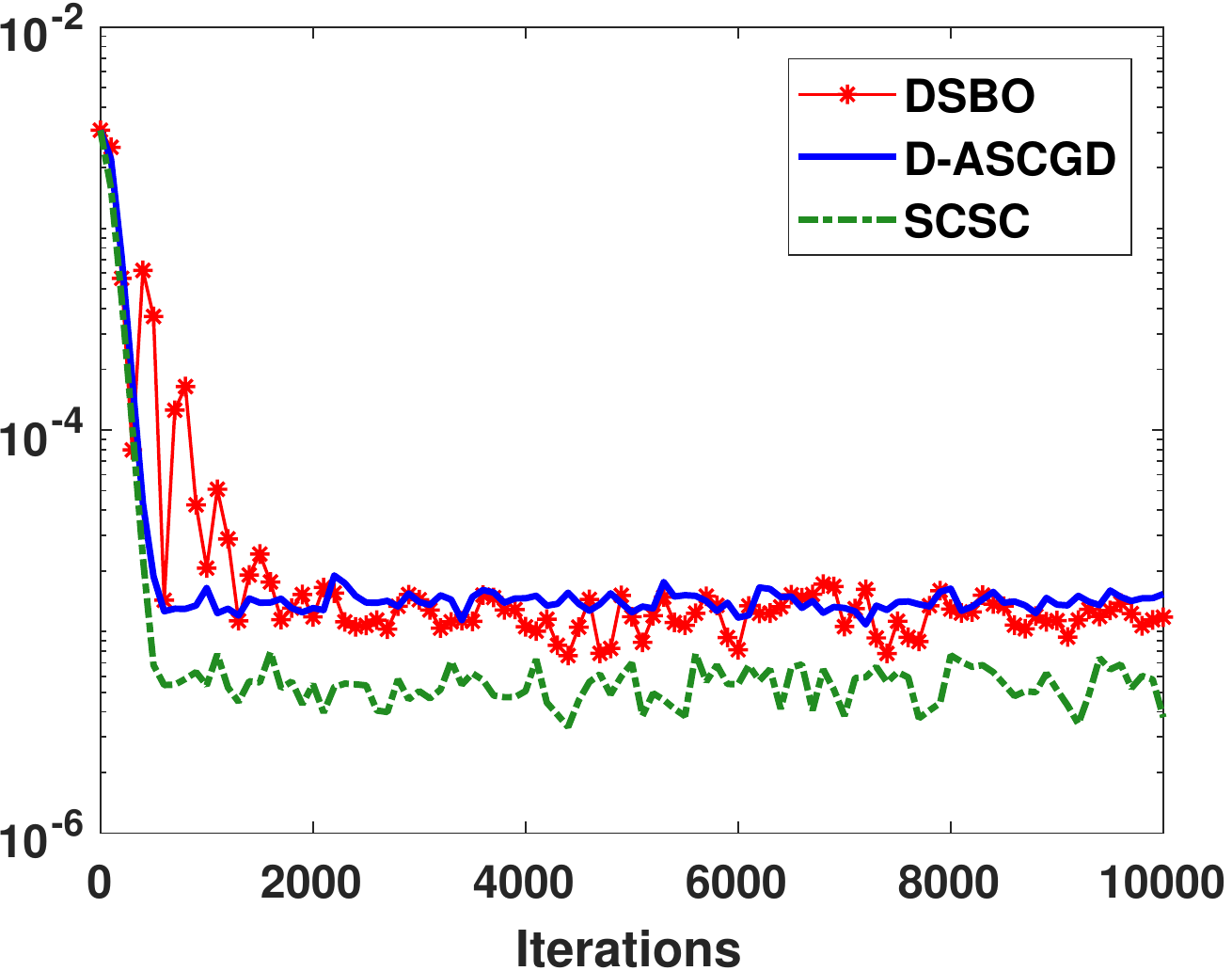}
	}
	\caption{{\small Evolutions of $\frac{1}{n}\sum_{j=1}^n \|\nabla h(x_{j,k})\|^2$  w.r.t to the number of iterations.}}
	\label{fig-6}
\end{figure}
\begin{figure}[h]
\centering
\subfigure[$\frac{1}{n}\sum_{j=1}^n \left(h(x_{j,k})-h(x^*)\right)$]{
	\includegraphics[width=2.9in]{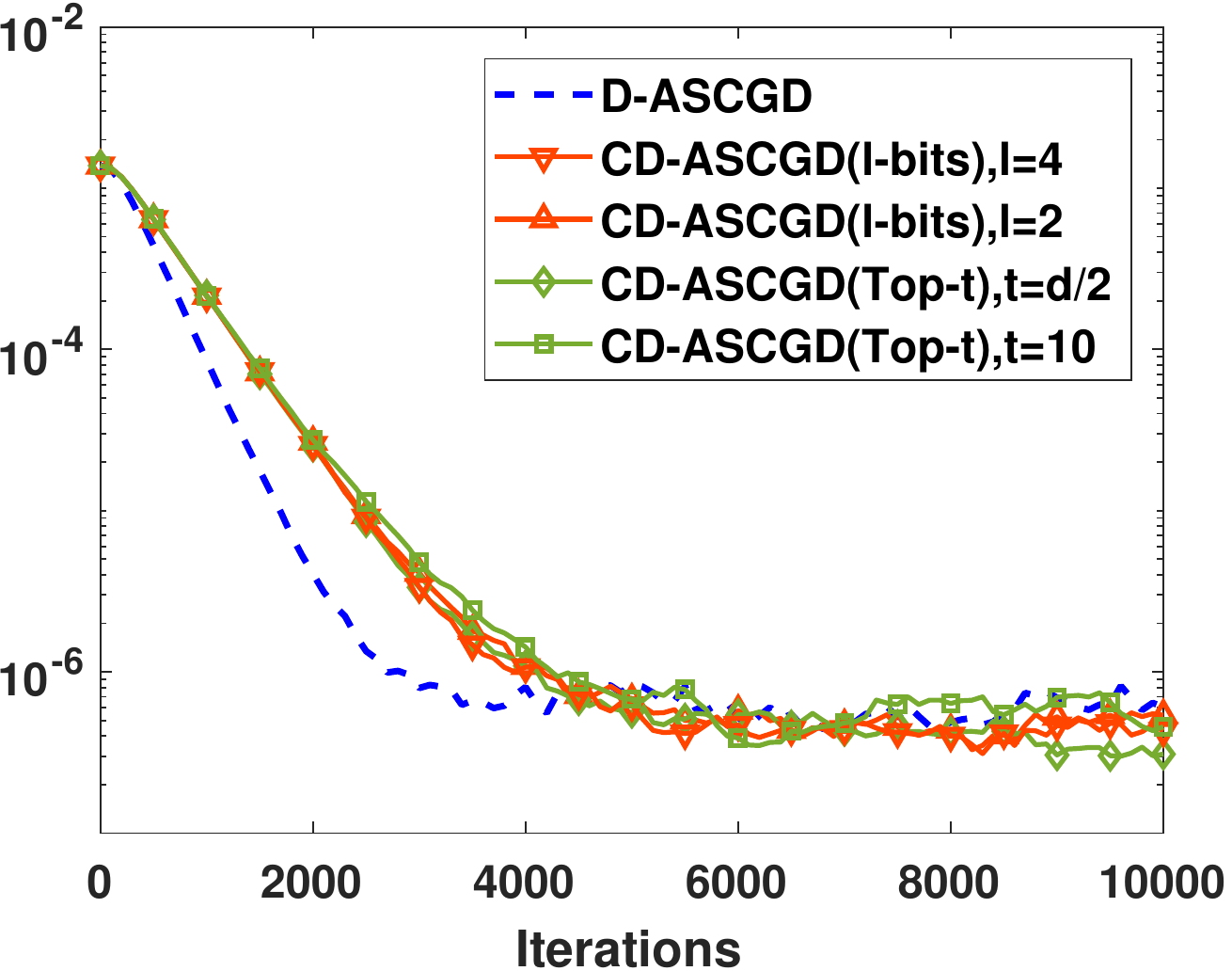}
}
\subfigure[$\frac{1}{n}\sum_{j=1}^n \|\nabla h(x_{j,k})\|^2$]{
	\includegraphics[width=2.9in]{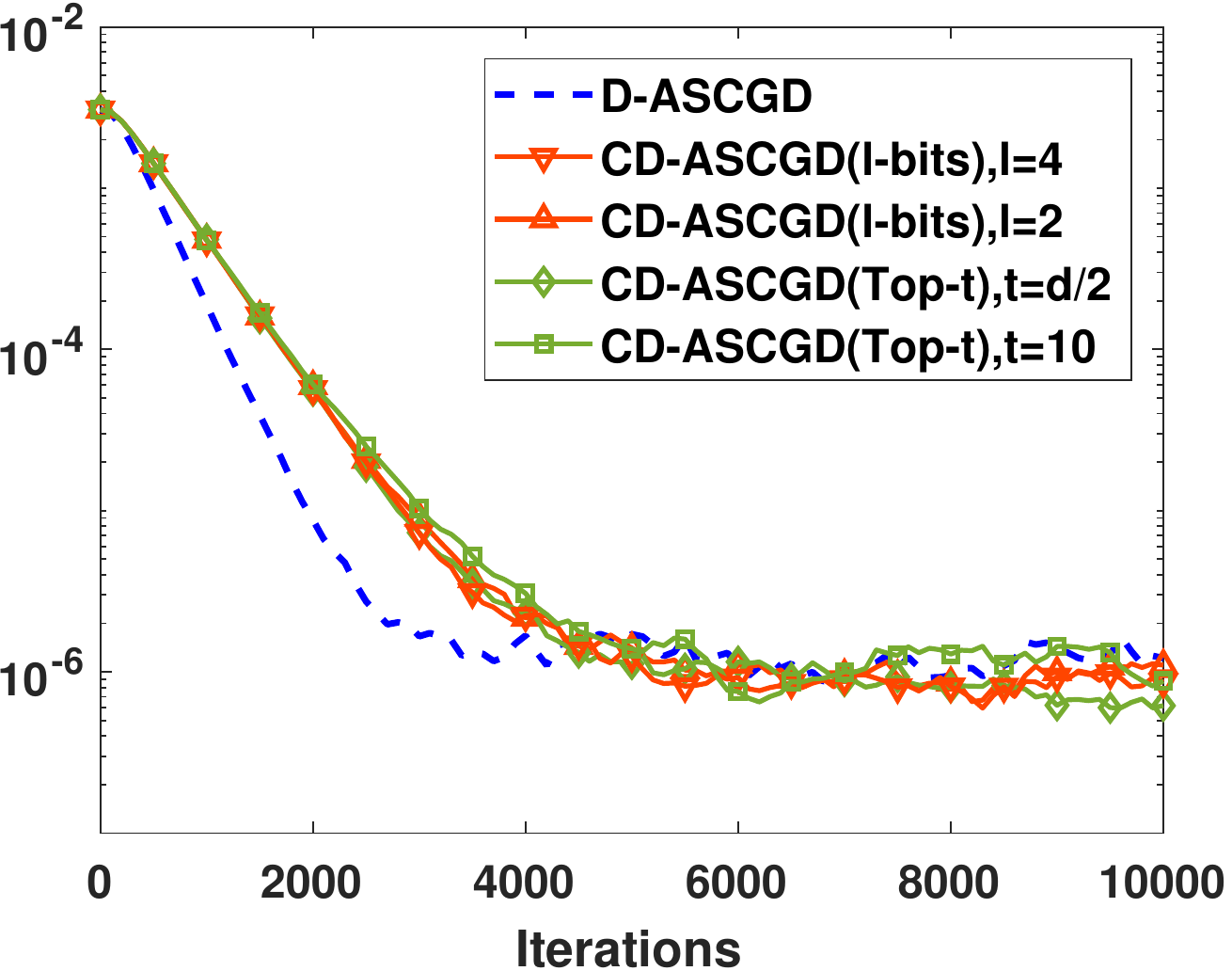}
}
\caption{{\small Evolutions of $\frac{1}{n}\sum_{j=1}^n \left(h(x_{j,k})-h(x^*)\right)$ and $\frac{1}{n}\sum_{j=1}^n \|\nabla h(x_{j,k})\|^2$ w.r.t to  the number of iterations (ring network, n=24).}}
\label{fig-9}
\end{figure}
\begin{figure}[h]
\centering
\subfigure[exponential network, n=6]{
	\includegraphics[width=1.9in]{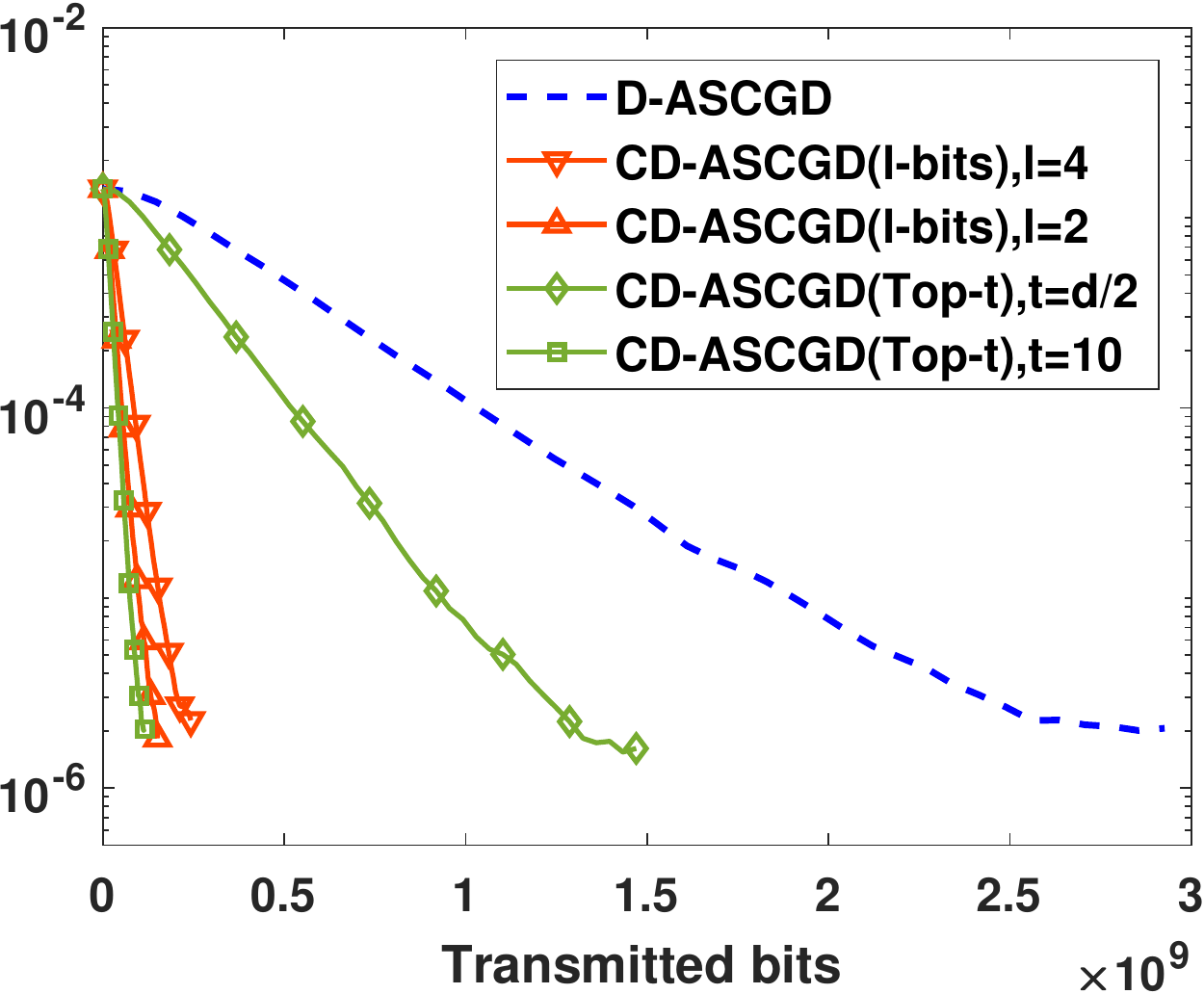}
}
\subfigure[exponential network, n=12]{
	\includegraphics[width=1.9in]{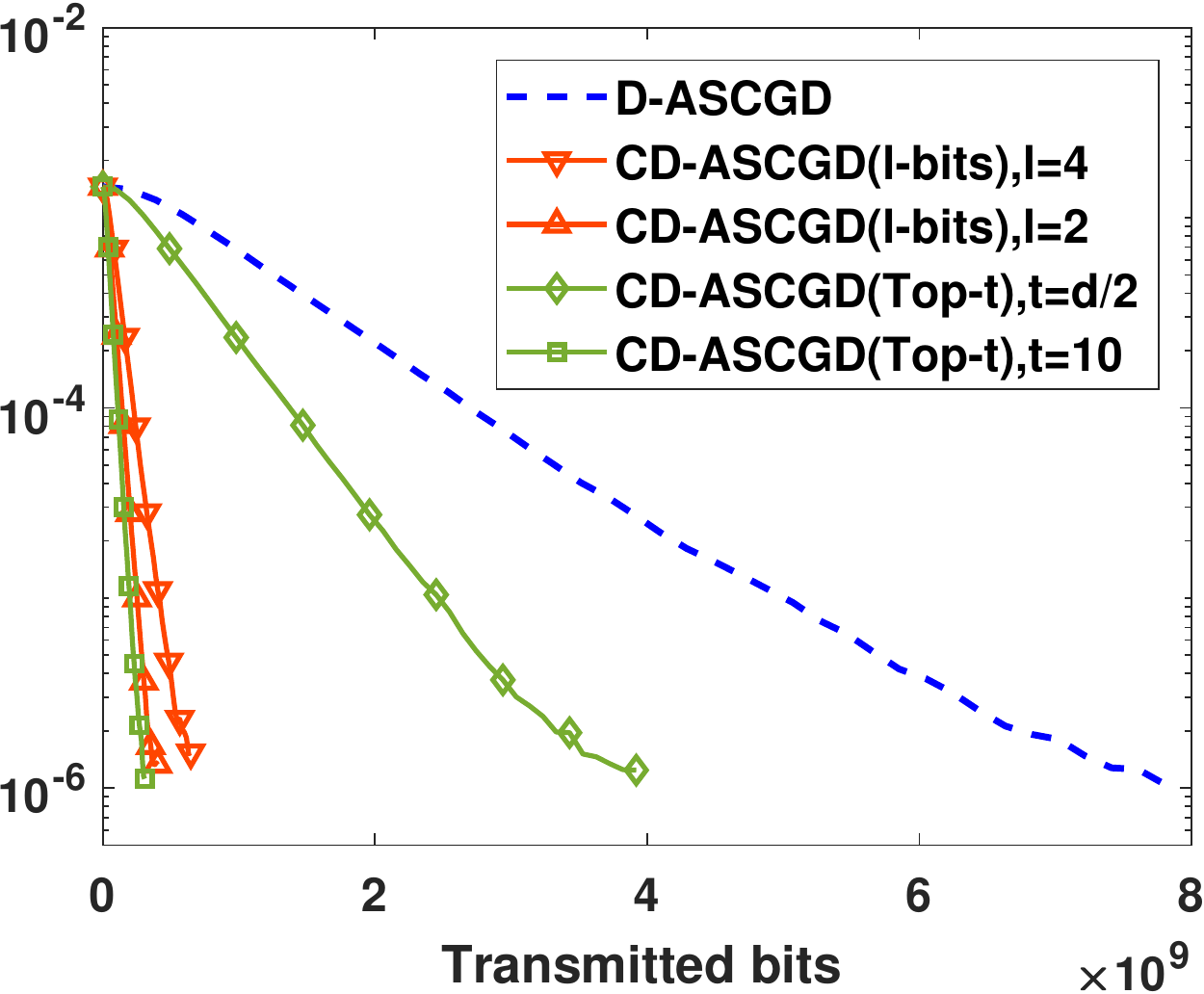}
}
\subfigure[exponential network, n=24]{
	\includegraphics[width=1.9in]{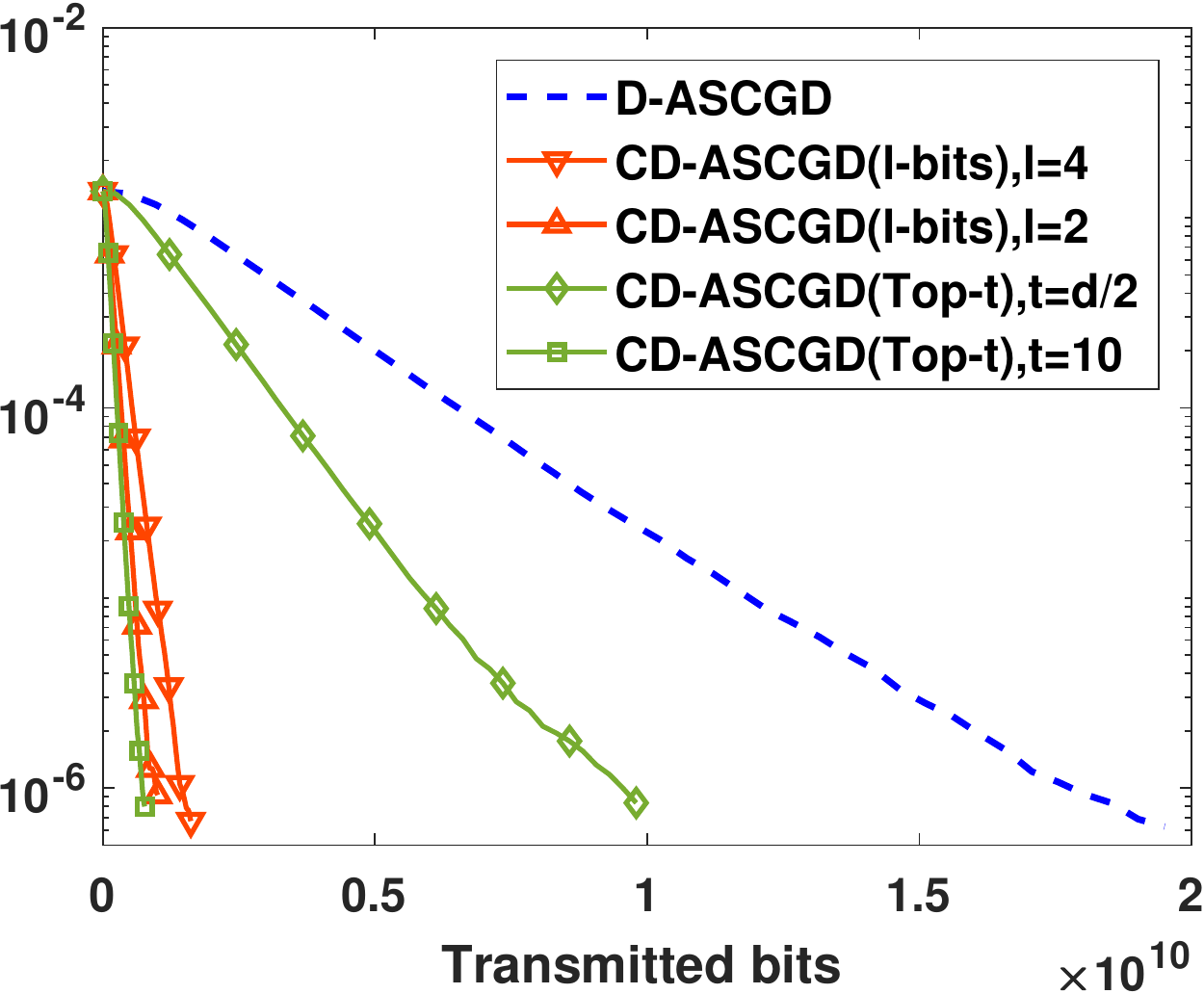}
}
\quad    
\subfigure[ring network, n=6]{
	\includegraphics[width=1.9in]{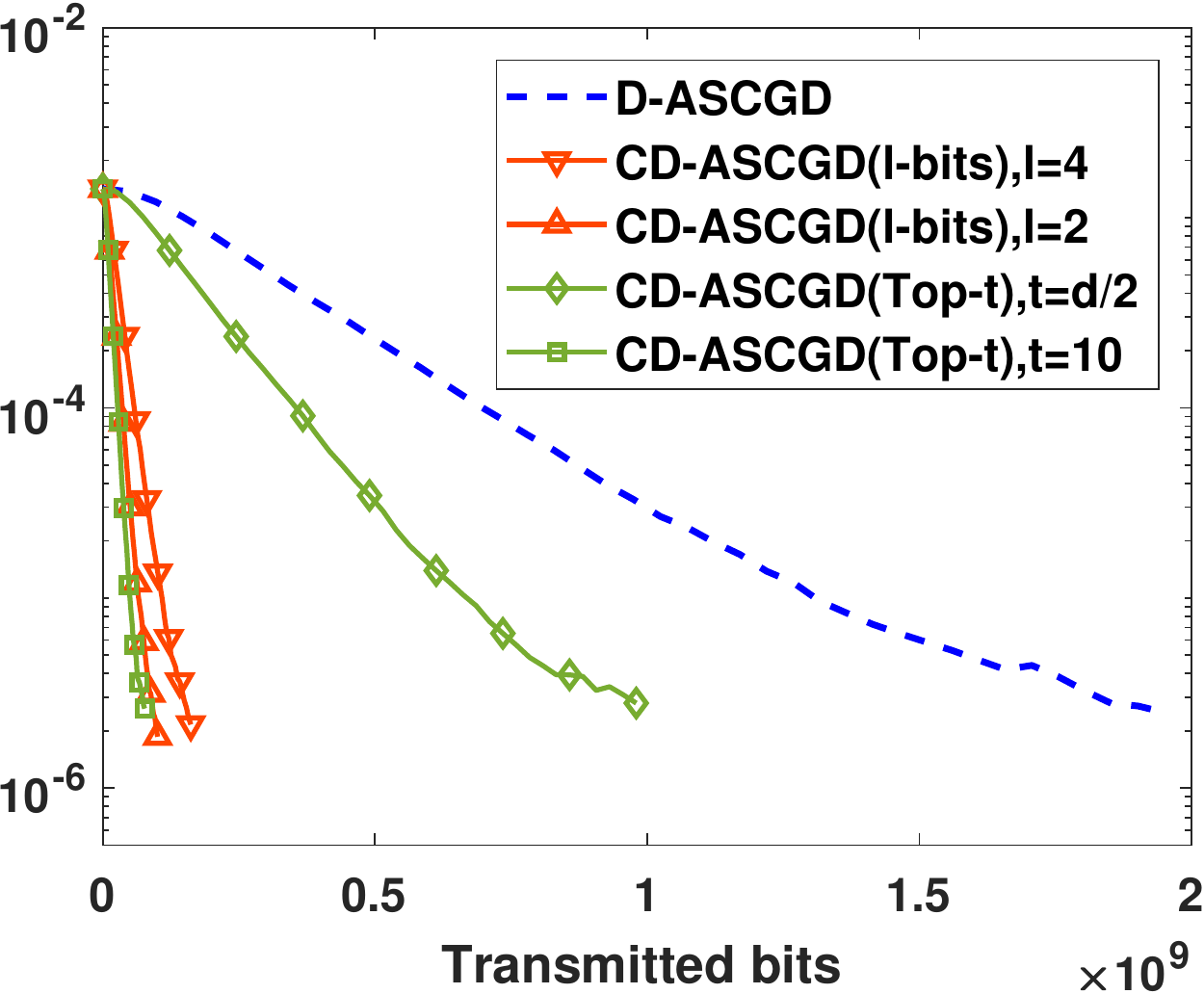}
}
\subfigure[ring network, n=12]{
	\includegraphics[width=1.9in]{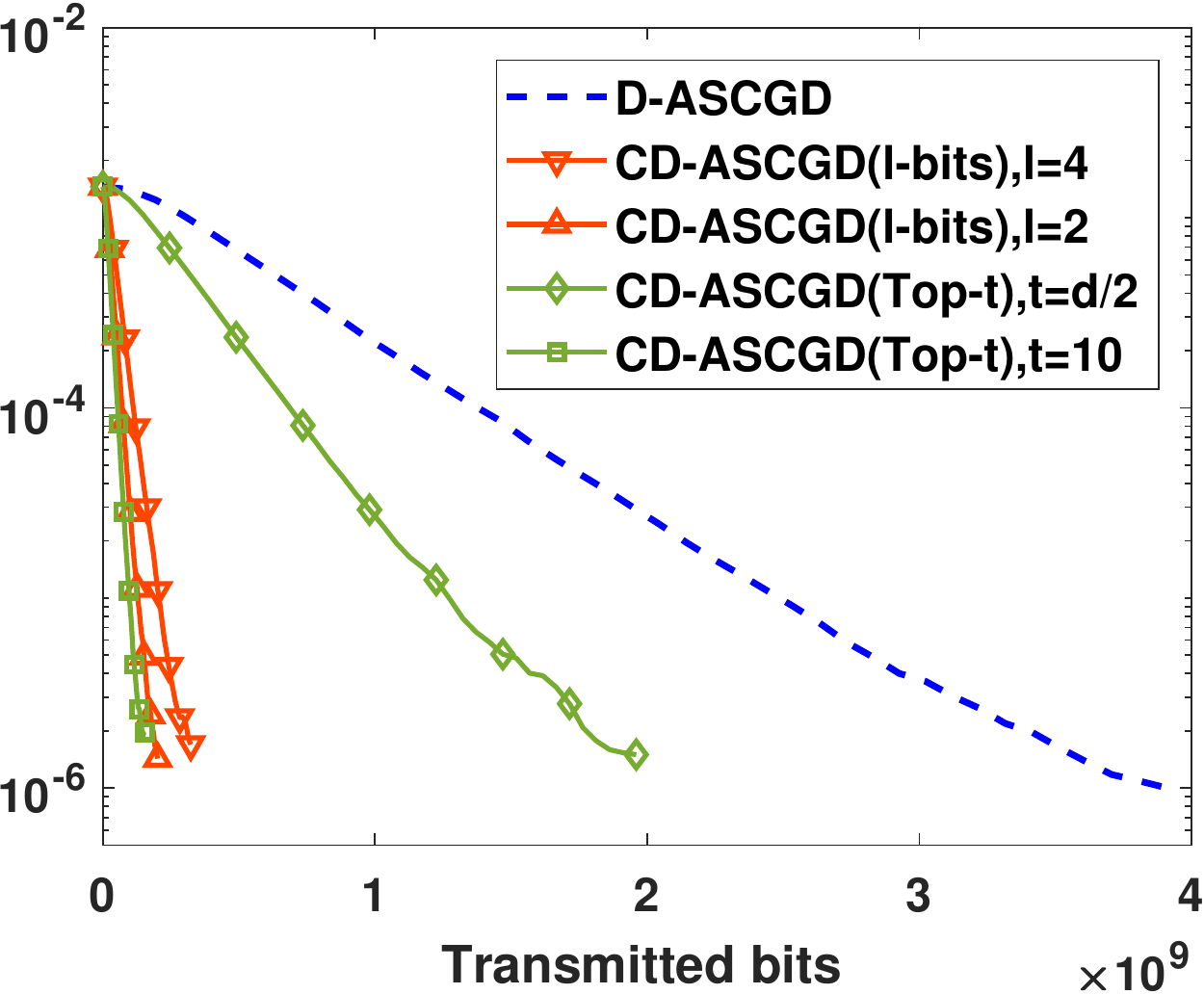}
}
\subfigure[ring network, n=24]{
	\includegraphics[width=1.9in]{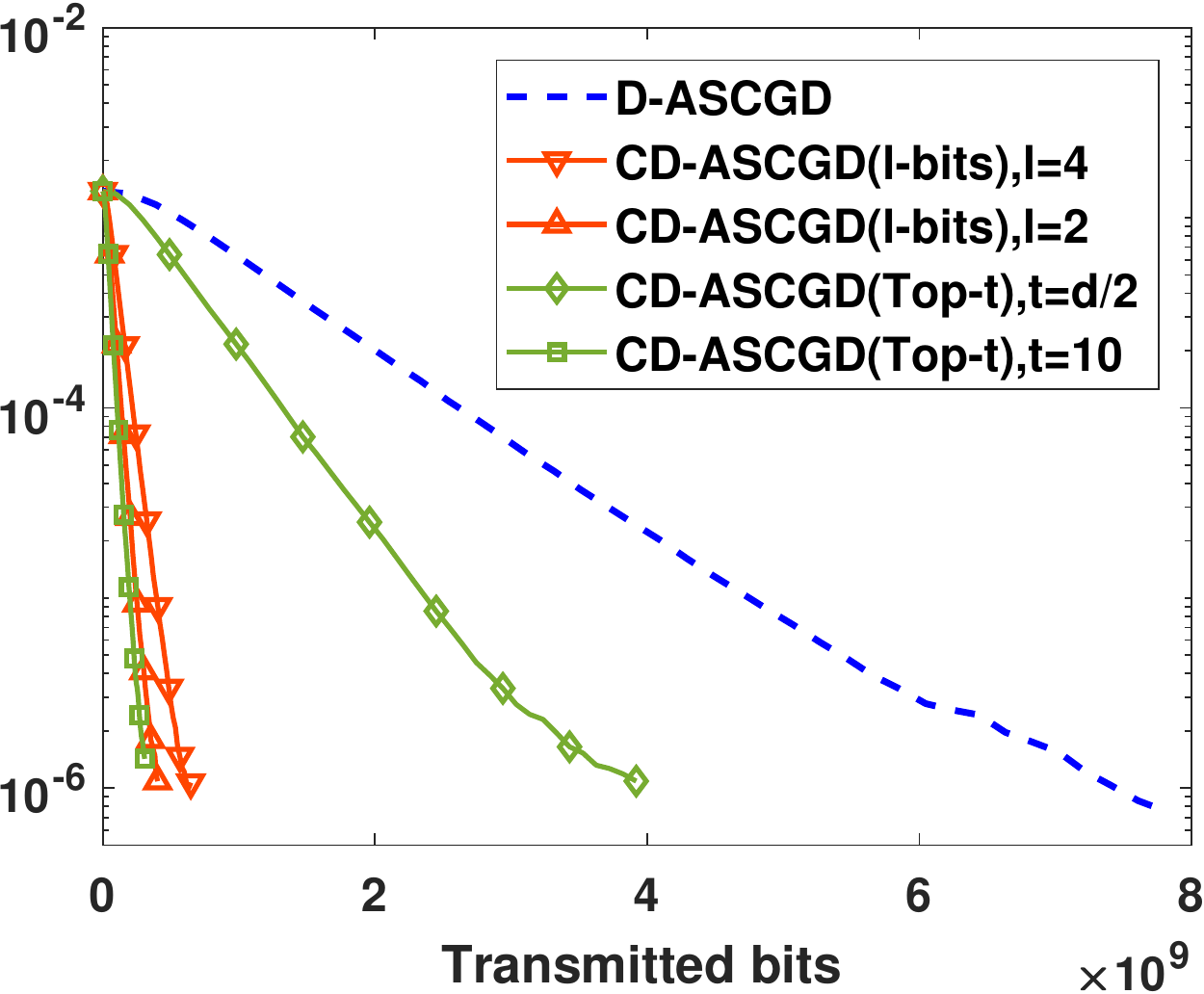}
}
\caption{Evolutions of $\frac{1}{n}\sum_{j=1}^n \left(h(x_{j,k})-h(x^*)\right)$   w.r.t the number of transmitted bits.
}
\label{fig-7}
\end{figure}
\begin{figure}[h]
\centering
\subfigure[exponential network, n=6]{
	\includegraphics[width=1.9in]{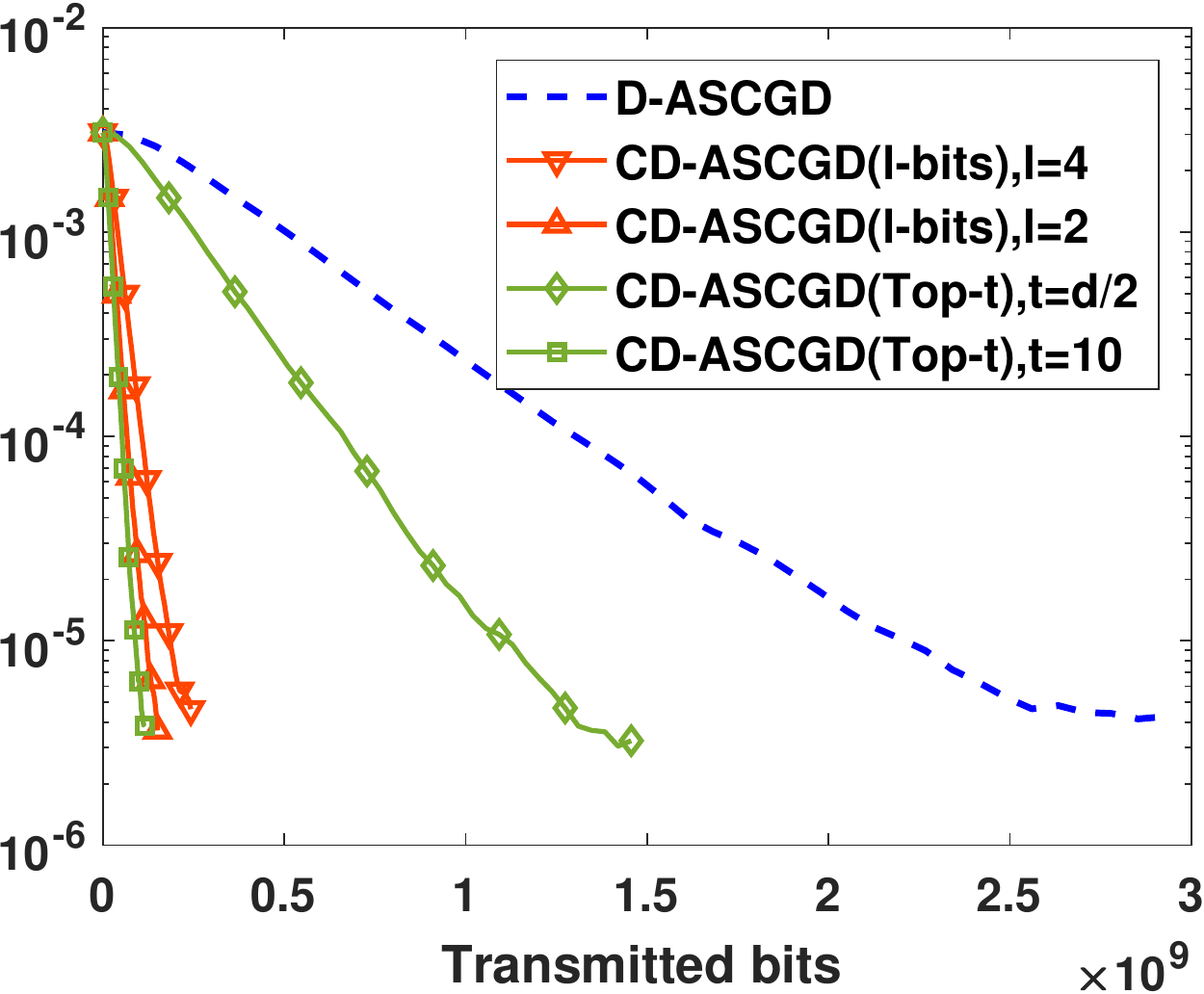}
}
\subfigure[exponential network, n=12]{
	\includegraphics[width=1.9in]{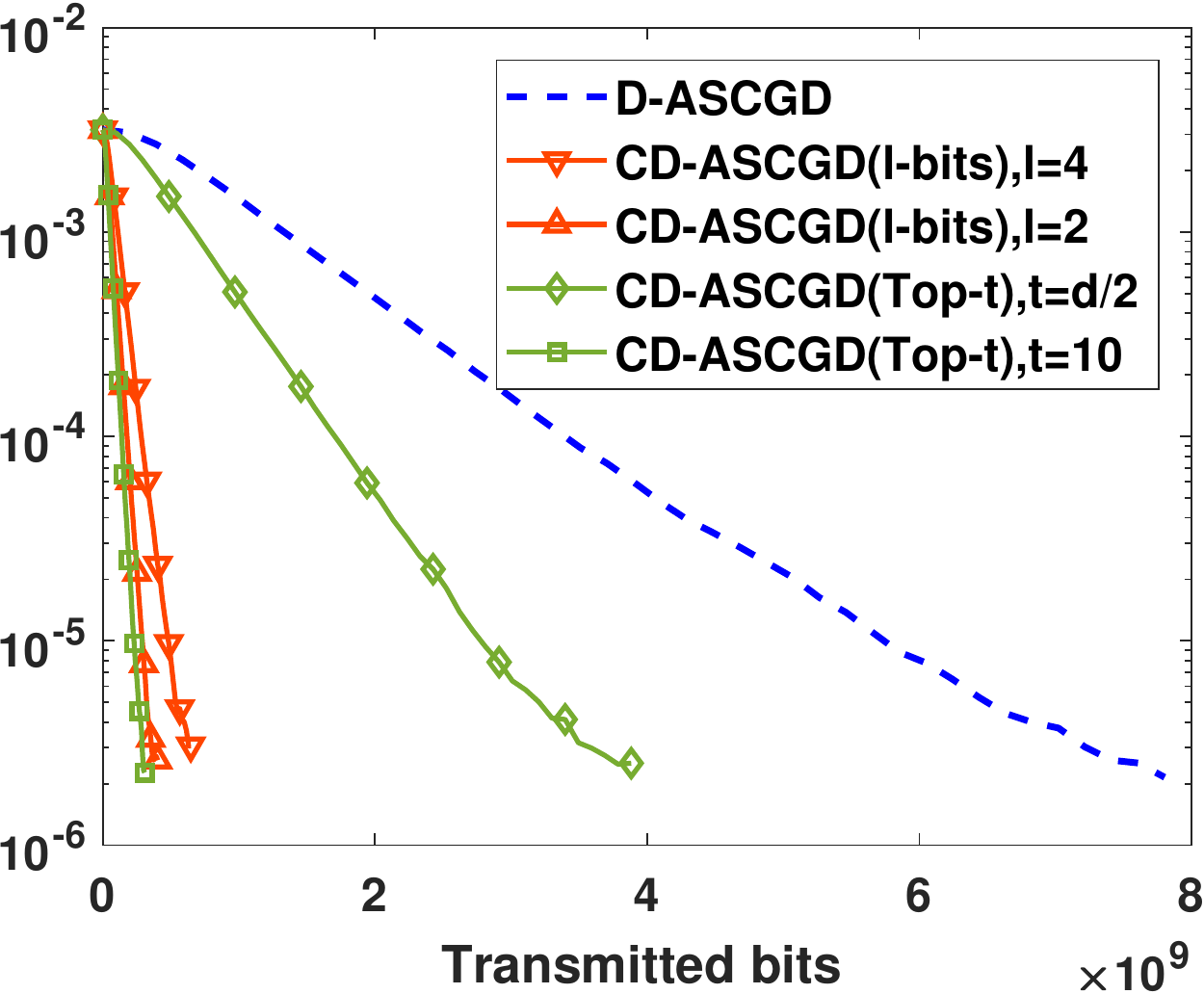}
}
\subfigure[exponential network, n=24]{
	\includegraphics[width=1.9in]{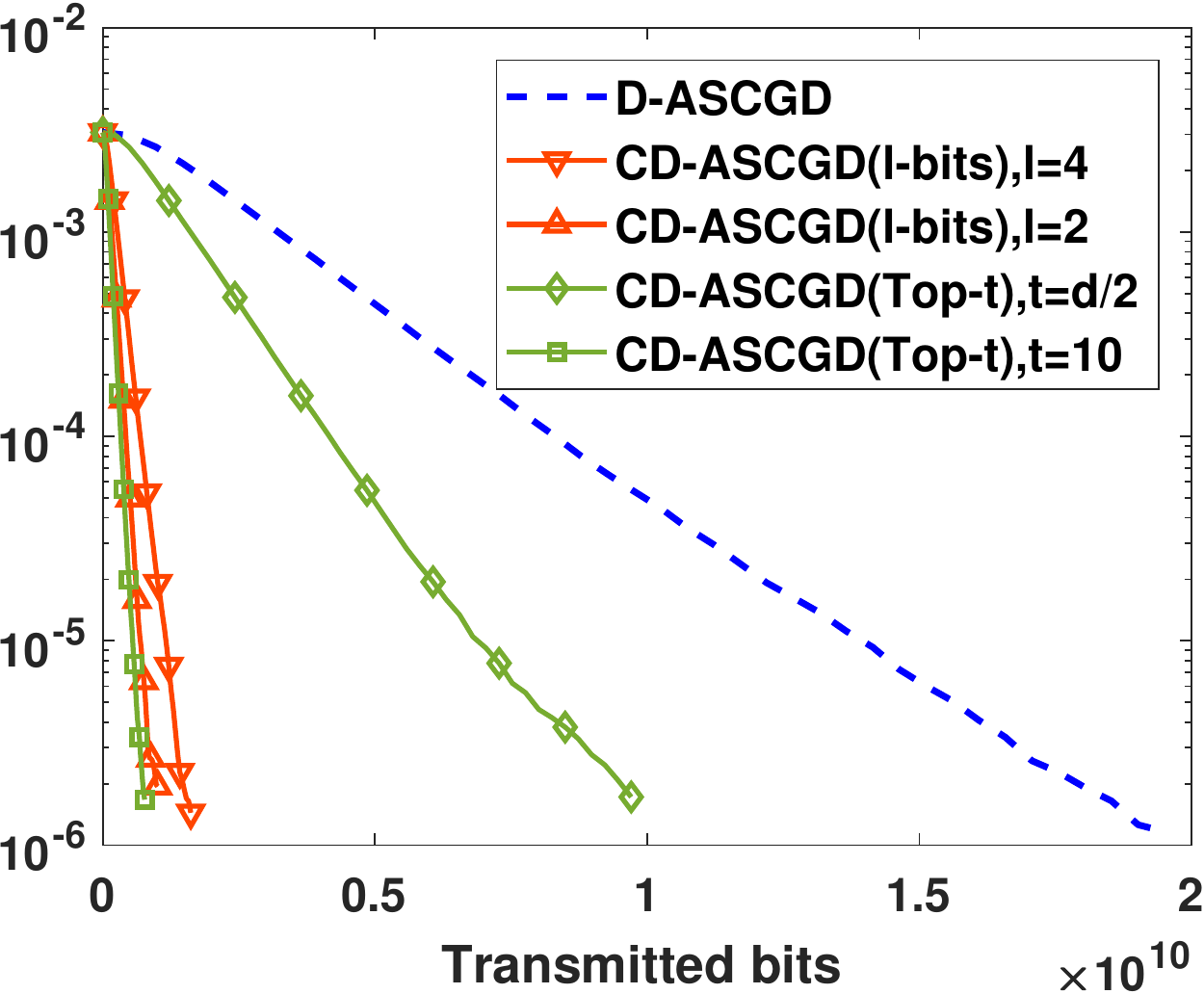}
}
\quad    
\subfigure[ring network, n=6]{
	\includegraphics[width=1.9in]{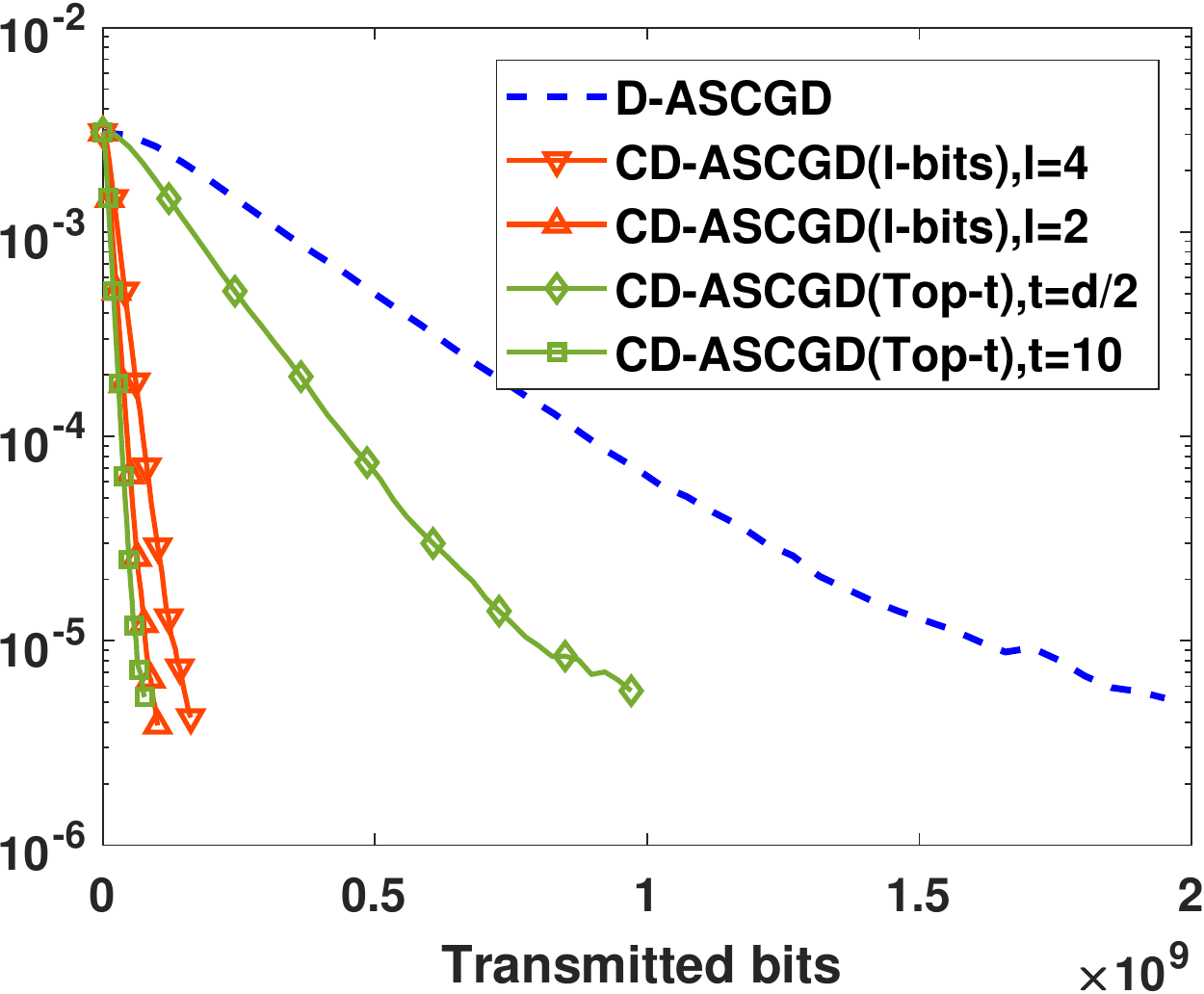}
}
\subfigure[ring network, n=12]{
	\includegraphics[width=1.9in]{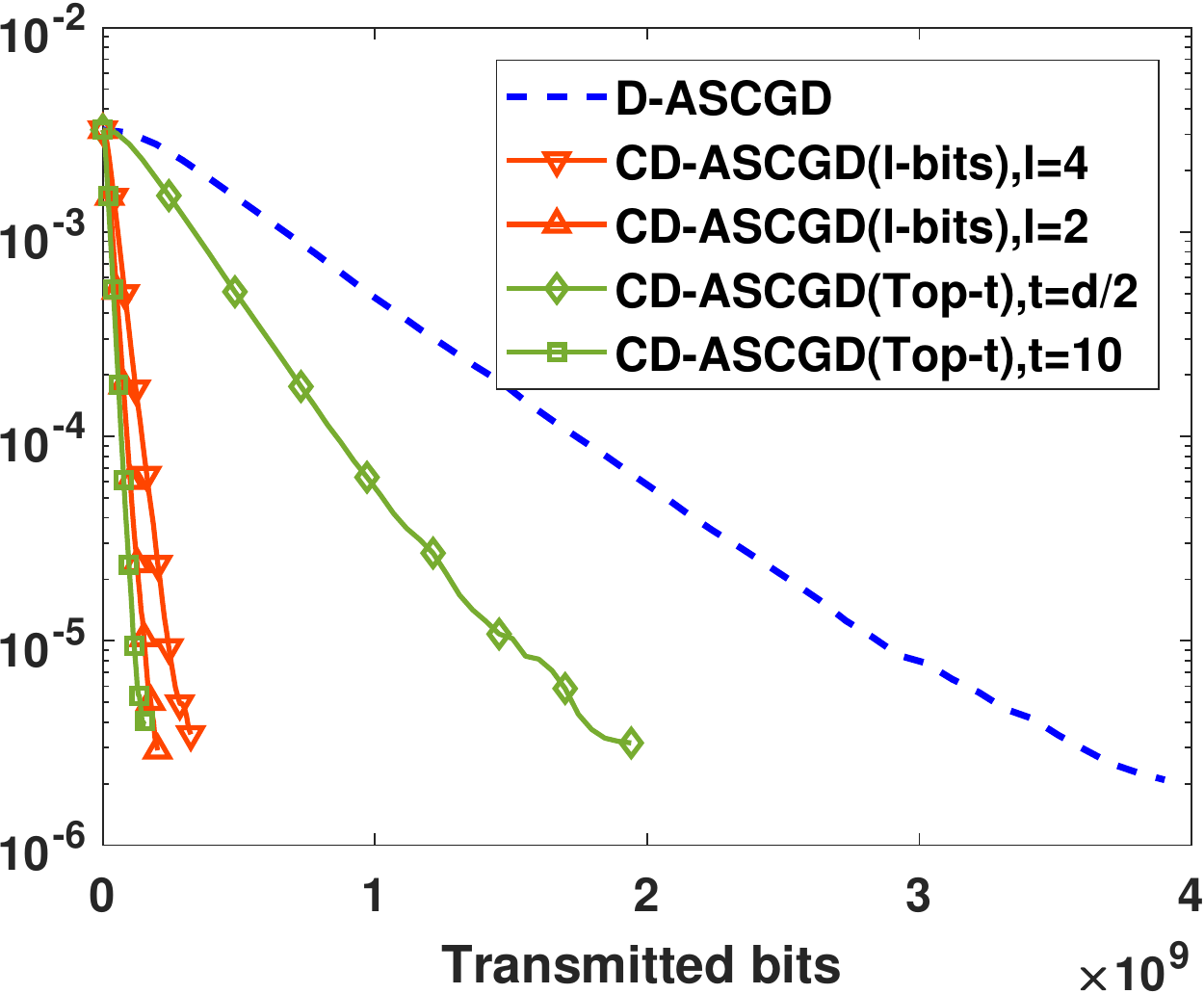}
}
\subfigure[ring network, n=24]{
	\includegraphics[width=1.9in]{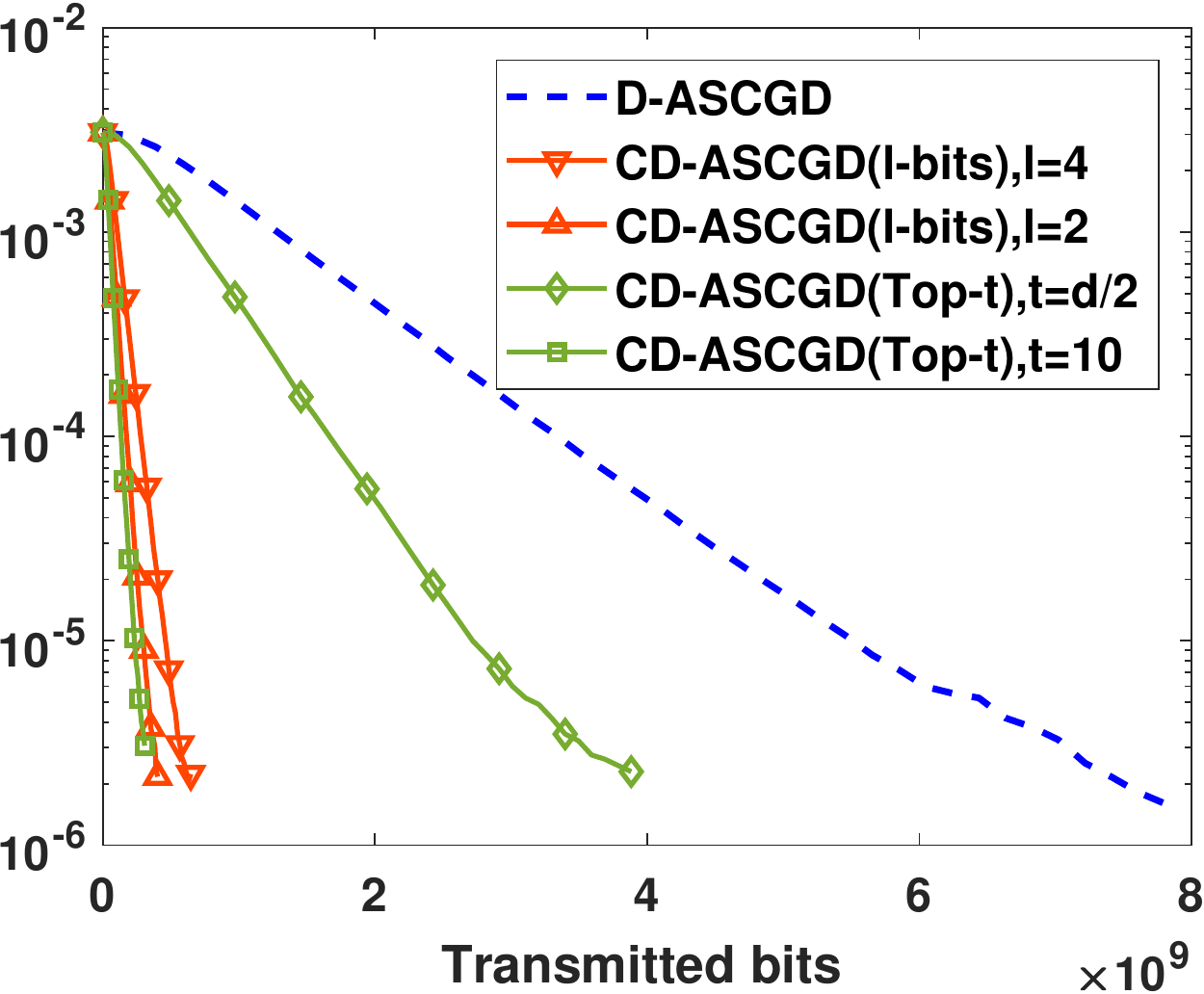}
}
\caption{{\small Evolutions of $\frac{1}{n}\sum_{j=1}^n \|\nabla h(x_{j,k})\|^2$  w.r.t to the number of transmitted bits.}}
\label{fig-8}
\end{figure}
Similar to \cite{yang2022bilevel}, we set regular parameter $\lambda=1$, the number of states $|\mathcal{S}|=100$. For any state $s\in\mathcal{S}$,  the feature $\phi_s\in [0,1]^d$ and  the mean of rewards $\bar{r}^{(i)}_{s,s^{'}}\in [0,1]$ are uniformly distributed. Moreover,  the transition probabilities $P_{s,s^{'}}$ are uniformly generated  and are standardized such that $\sum_{s^{'}=1}^{|\mathcal{S}|}P_{s,s^{'}}=1$. In each simulation, we simulate a random transition from state $s$ to state $s^{'}$ using the transition probability $P_{s,s^{'}}$, generate a random reward $r^{(i)}_{s,s^{'}}\sim N(\bar{r}^{(i)}_{s,s^{'}},1)$ for agent $i$.

We compare D-ASCGD with centralized algorithm SCSC \cite{tianyi2021sol}  and distributed algorithm  DSBO \cite{yang2022bilevel} with stepsizes  $\alpha_k=0.01$, $\beta_k=0.01$  and   $\gamma_k=0.01$.
In each iteration, D-ASCGD and DSBO utilize 5 samples to calculate gradients and function values for each agent and  SCSC utilizes $5n$ samples  to calculate gradients and function value.
We test D-ASCGD and DSBO over exponential networks and ring networks with varying
network sizes $n=6,12,24$.  The weight matrices related to the exponential networks are generated by the way described in \cite{Ying2021Exp}, and the ones related to the ring graphs are set as
\begin{equation*}
w_{i,j}=\left\{
\begin{aligned}
&0.5,\quad \text{if}~ i=j,\\
&0.25,\quad \text{if}~|i-j|=1,\\
&0.25,\quad \text{if}~(i,j)=(1,n)~\text{or}~(i,j)=(n,1),\\
&0,\quad\quad \text{otherwise}.
\end{aligned}
\right.
\end{equation*}

We record the  performance on the  average of the norm square of gradients $\frac{1}{n}\sum_{j=1}^n \|\nabla h(x_{j,k})\|^2$ and the averaged residual $\frac{1}{n}\sum_{j=1}^n \left(h(x_{j,k})-h(x^*)\right)$
in Figures \ref{fig-5} and \ref{fig-6},\footnote{$h(x)$ denotes the objective function of problem (\ref{RL}).} where the optimal solution $x^*$ is obtained by standard gradient descent method.   Similar to \cite{yang2022bilevel}, we run the simulations 10 times for the algorithms and report the average performance. We can observe from Figures \ref{fig-5} and \ref{fig-6} that the three algorithms   converge fast and have the comparable residuals of optimal values and gradients.  Note also that SCSC uses the information of  global function, its performance is slightly  better than
 the distributed  algorithms  D-ASCGD and DSBO.  For the two distributed algorithms D-ASCGD and DSBO, they have the comparable convergence over the networks with different types and sizes, which matches the conclusion  of Theorem \ref{thm:rate}.

For  CD-ASCGD, we consider the following two  compressors.

\begin{itemize}
	\item [$\bullet$]$l$-bits quantizer  \cite{Zhang2021Innov}:
	\begin{equation*}
	\mathcal{C}(x)=\frac{x_{max}}{2^{l-1}}\textbf{sign}(x)\odot \left\lfloor\frac{2^{l-1}|x|}{x_{max}}+u\right\rfloor,
	\end{equation*}
	where $\textbf{sign}(x)$ is the sign function, $\odot$ is the Hadamard product, $|x|$ is the element-wise absolute value of $x$, and $u$ is a random perturbation vector uniformly distributed in $[0,1]^d$, $x_{max}$ refers to the largest absolute value of the elements of $x$. In the test, we choose $l=2,4$ and $b=64$.
	\item [$\bullet$]Top-t sparsifier \cite{Bezno2020biased}:
		\begin{equation*}
		\mathcal{C}(x)=\sum_{l=1}^t[x]_{i_l}\mathbf{e}_{i_l},
		\end{equation*}
		where $\{\mathbf{e}_1,\cdots,\mathbf{e}_d\}$ is the standard basis of $\mathbb{R}^d$ and $i_1,\cdots,i_t$ are the indices of largest $t$ coordinates in magnitude of $x$.   We consider the cases that $t=d/2,10$ ($d>20$).
\end{itemize}

In Figure  \ref{fig-9}, we record the average performance on {\small$\frac{1}{n}\sum_{j=1}^n \|\nabla h(x_{j,k})\|^2$} and {\small$\frac{1}{n}\sum_{j=1}^n \left(h(x_{j,k})-h(x^*)\right)$} with
10 times simulation.  As shown in  \ref{fig-9},  CD-ASCGD  has comparable convergence speeds with D-ASCGD, which is consistent with our theoretical results.
In Figures \ref{fig-7} and \ref{fig-8}, we record the convergence performances of CD-ASCGD and D-ASCGD with respect to the number of bits transmitted between agents. It is easy to observe that CD-ASCGD   transmits  less bits  than D-ASCGD,  which becomes more obvious as the  size of network increasing and the needed transmitted bits of  compressors decreasing.
 Moreover, the  CD-ASCGD needs to transmit more bits  between agents with the increasing of the network size.

\textbf{Acknowledgment.} The research is supported by the NSFC \#11971090 and  Fundamental Research Funds for the Central Universities   DUT22LAB301.

\bibliographystyle{siam}
\bibliography{dsi_mybib}

\section*{Appendix}
\begin{appendices}
\section{}\label{apd:D-NSCGD}
\textbf{Proof of Lemma \ref{lem:tracking}}
\begin{proof}
	By the iteration (\ref{alg:G-1}),
	\begin{align}
	\left\|\mathbf{G}_{k+1}- \mathbf{g}_{k+1}\right\|^2 &=\left\|(1-\beta_k)\left(\mathbf{G}_k-\mathbf{g}_k\right)+(G_{k+1,k+1}-\mathbf{g}_{k+1})+(1-\beta_k)(\mathbf{g}_k-G_{k+1,k})\right\|^2\notag\\
	&=(1-\beta_k)^2\|\mathbf{G}_k-\mathbf{g}_k\|^2+\left\|(G_{k+1,k+1}-\mathbf{g}_{k+1})+(1-\beta_k)(\mathbf{g}_k-G_{k+1,k})\right\|^2\notag\\
	\label{G-bound-0}&\quad + 2\left\langle(1-\beta_k)(\mathbf{G}_k-\mathbf{g}_k),(G_{k+1,k+1}-\mathbf{g}_{k+1})+(1-\beta_k)(\mathbf{g}_k-G_{k+1,k})\right\rangle.
	\end{align}
	Note that
	{\small\begin{equation*}
		\begin{aligned}
		&\mathbb{E}\left[(G_{k+1,k+1}-\mathbf{g}_{k+1})+(1-\beta_k)(\mathbf{g}_k-G_{k+1,k})\right]\\
		 &=\mathbb{E}\left[\mathbb{E}\left[(G_{k+1,k+1}-\mathbf{g}_{k+1})+(1-\beta_k)(\mathbf{g}_k-G_{k+1,k})\bigg|\mathcal{F}_k,\zeta_{1,k+1},\cdots,\zeta_{n,k+1}\right]\right]=\mathbf{0},
		\end{aligned}
		\end{equation*}}
	where
	\begin{equation}\label{sigma-alge}
	\begin{aligned}
	&\mathcal{F}_1=\sigma\left(x_{i,1}, y_{i,1},z_{i,1},G_{i,1},\hat{G}_{i,1}:i\in\mathcal{V}\right),\\
	&\mathcal{F}_k=\sigma\{x_{i,1},y_{i,1},z_{i,1},G_{i,1},\hat{G}_{i,1}, \phi_{i,t},\zeta_{i,t}:i\in\mathcal{V}, 2\le t\le k\}(k\ge2).
	\end{aligned}
	\end{equation}
	Then, taking expectation on both sides of (\ref{G-bound-0}),
	{\small\begin{align*}
		&\mathbb{E}\left[\|\mathbf{G}_{k+1}- \mathbf{g}_{k+1}\|^2\right]= (1-\beta_k)^2\mathbb{E}\left[\|\mathbf{G}_k-\mathbf{g}_k\|^2\right]+\mathbb{E}\left[\left\|(G_{k+1,k+1}-\mathbf{g}_{k+1})+(1-\beta_k)(\mathbf{g}_k-G_{k+1,k})\right\|^2\right].
		\end{align*}}
	By Assumption \ref{ass-objective}(c) and (d),
	\begin{align*}
	&\mathbb{E}\left[\left\|(G_{k+1,k+1}-\mathbf{g}_{k+1})+(1-\beta_k)(\mathbf{g}_k-G_{k+1,k})\right\|^2\right]\notag\\
	&=\mathbb{E}\left[\|(1-\beta_k)(G_{k+1,k+1}-G_{k+1,k})+\beta_k( G_{k+1,k+1}- \mathbf{g}_{k+1})+(1-\beta_k)(\mathbf{g}_k-\mathbf{g}_{k+1})	 \|^2\right]\notag\\
	&\le 3(1-\beta_k)^2\mathbb{E}\left[\|G_{k+1,k+1}-G_{k+1,k}\|^2\right]+3\beta_k^2\mathbb{E}\left[\|G_{k+1,k+1}- \mathbf{g}_{k+1}\|^2\right]+3(1-\beta_k)^2\mathbb{E}\left[\|\mathbf{g}_k-\mathbf{g}_{k+1}	 \|^2\right]\notag\\
	&\le 6(1-\beta_k)^2C_g\mathbb{E}\left[\|\mathbf{x}_{k+1}-\mathbf{x}_k\|^2\right]+3\beta_k^2V_g.
	\end{align*}
	Note that
	\begin{align}
	 \mathbb{E}\left[\|\mathbf{x}_{k+1}-\mathbf{x}_k\|^2\right]&=\mathbb{E}\left[\left\|\left(\tilde{\mathbf{W}}-\mathbf{I}_{nd}\right)\left(\mathbf{x}_k-\mathbf{1}\otimes \bar{x}_k\right)-\alpha_k\mathbf{U}_{k+1}\right\|^2\right]\notag\\
	&\le 2\left(4\mathbb{E}\left[ \left\|\mathbf{x}_k-\mathbf{1}\otimes \bar{x}_k\right\|^2\right]+\alpha_k^2\mathbb{E}\left[\left\|\mathbf{U}_{k+1}\right\|^2\right]\right)\notag\\
	\label{x-x-bound}&\le 2\left(4\mathbb{E}\left[ \left\|\mathbf{x}_k-\mathbf{1}\otimes \bar{x}_k\right\|^2\right]+\alpha_k^2C_f\mathbb{E}\left[\normm{\mathbf{z}_k}_F^2\right]\right),
	\end{align}
	where the equality follows from the row stochasticity of $\mathbf{W}$, the first inequality follows from the facts $\normm{\mathbf{W}-\mathbf{I}_{n}}\le 2$ and $\normm{\mathbf{W}}=1$, the second inequality follows from the definition of $\mathbf{U}_{k+1}$ and Assumption \ref{ass-objective}(c).
	Then
	\begin{align*}
	\mathbb{E}\left[\|\mathbf{G}_{k+1}- \mathbf{g}_{k+1}\|^2\right]
	&\le (1-\beta_k)^2\mathbb{E}\left[\|\mathbf{G}_k-\mathbf{g}_k\|^2\right]+6(1-\beta_k)^2C_g\mathbb{E}\left[\|\mathbf{x}_{k+1}-\mathbf{x}_k\|^2\right]+3\beta_k^2V_g\\
	&\le(1-\beta_k)^2\mathbb{E}\left[\|\mathbf{G}_k-\mathbf{g}_k\|^2\right]+48(1-\beta_k)^2C_g\mathbb{E}\left[ \left\|\mathbf{x}_k-\mathbf{1}\otimes \bar{x}_k\right\|^2\right]\notag\\
	&\quad+12(1-\beta_k)^2C_g\alpha_k^2C_f\mathbb{E}\left[\normm{\mathbf{z}_k}_F^2\right]+3\beta_k^2V_g,
	\end{align*}
    which verifies (\ref{GG-bound}).
	
	Inequality (\ref{Gd-bound}) could be obtained by the similar analysis of (\ref{GG-bound}). The proof is complete.
\end{proof}

\textbf{Proof of Lemma \ref{lem:consensus}}
\begin{proof}
	We first provide the upper bound of consensus error $\mathbb{E}\left[\left\|\mathbf{x}_{k+1}-\mathbf{1}\otimes\bar{x}_{k+1}\right\|^2\right]$.
	By the definition of $\bar{x}_{k+1}$ and the row stochasticity of weight matrix $\mathbf{W}$,
	\begin{equation*}
	 \bar{x}_{k+1}=\left(\frac{\mathbf{1}^\intercal}{n}\otimes\mathbf{I}_d\right)\mathbf{x}_{k+1}=\left(\frac{\mathbf{1}^\intercal}{n}\otimes\mathbf{I}_d\right)\left(\tilde{\mathbf{W}}\mathbf{x}_k-\alpha_k\mathbf{U}_{k+1}\right)=\left(\frac{\mathbf{1}^\intercal}{n}\otimes\mathbf{I}_d\right)\left(\mathbf{x}_k-\alpha_k\mathbf{U}_{k+1}\right).
	\end{equation*}
    Combining  the above equality with the iteration (\ref{alg:x-1}),
	\begin{align}
	&\left\|\mathbf{x}_{k+1}-\mathbf{1}\otimes\bar{x}_{k+1}\right\|^2\notag\\
	 &=\left\|\left(\tilde{\mathbf{W}}-\frac{\mathbf{1}\mathbf{1}^\intercal}{n}\otimes\mathbf{I}_d\right)\mathbf{x}_k-\alpha_k\left(\mathbf{I}_{dn}-\frac{\mathbf{1}\mathbf{1}^\intercal}{n}\otimes\mathbf{I}_d\right)\mathbf{U}_{k+1}\right\|^2\notag\\
	 &=\left\|\left(\tilde{\mathbf{W}}-\frac{\mathbf{1}\mathbf{1}^\intercal}{n}\otimes\mathbf{I}_d\right)\left(\mathbf{x}_k-\mathbf{1}\otimes\bar{x}_k\right)-\alpha_k\left(\mathbf{I}_{dn}-\frac{\mathbf{1}\mathbf{1}^\intercal}{n}\otimes\mathbf{I}_d\right)\mathbf{U}_{k+1}\right\|^2\notag\\
	&\le (1+\tau)\left\|\left(\tilde{\mathbf{W}}-\frac{\mathbf{1}\mathbf{1}^\intercal}{n}\otimes\mathbf{I}_d\right)\left(\mathbf{x}_k-\mathbf{1}\otimes\bar{x}_k\right)\right\|^2+\left(1+\frac{1}{\tau}\right)\left\|\alpha_k\left(\mathbf{I}_{dn}-\frac{\mathbf{1}\mathbf{1}^\intercal}{n}\otimes\mathbf{I}_d\right)\mathbf{U}_{k+1}\right\|^2\notag\\
	&\le (1+\tau)\rho^2\left\|\mathbf{x}_k-\mathbf{1}\otimes\bar{x}_k\right\|^2+\left(1+\frac{1}{\tau}\right)\alpha_k^2\normm{\mathbf{I}_{n}-\frac{\mathbf{1}\mathbf{1}^\intercal}{n}}^2\left\|\mathbf{U}_{k+1}\right\|^2\notag\\
	\label{x-bound}&= (1+\tau)\rho^2\left\|\mathbf{x}_k-\mathbf{1}\otimes\bar{x}_k\right\|^2+\left(1+\frac{1}{\tau}\right)\alpha_k^2\left\|\mathbf{U}_{k+1}\right\|^2,
	\end{align}
	where $\rho\define \normm{\mathbf{W}-\frac{\mathbf{1}\mathbf{1}^\intercal}{n}}<1$ \cite[Lemma 4]{Li2022Aggregative}, the second equality follows from the row stochasticity of $\mathbf{W}$, the first inequality follows from that $(a+b)^2\le (1+\tau)a^2+(1+1/\tau)b^2$ for any $\tau>0$, the last equality follows from the fact $\normm{\mathbf{I}_{n}-\frac{\mathbf{1}\mathbf{1}^\intercal}{n}}=1$. Taking expectation on both sides of inequality (\ref{x-bound}),
	\begin{align}
	\mathbb{E}\left[\left\|\mathbf{x}_{k+1}-\mathbf{1}\otimes\bar{x}_{k+1}\right\|^2\right]
	&\le (1+\tau)\rho^2\mathbb{E}\left[\left\|\mathbf{x}_k-\mathbf{1}\otimes\bar{x}_k\right\|^2\right]+\left(1+\frac{1}{\tau}\right)\alpha_k^2\mathbb{E}\left[\left\|\mathbf{U}_{k+1}\right\|^2\right]\notag\\
	&\le (1+\tau)\rho^2\mathbb{E}\left[\left\|\mathbf{x}_k-\mathbf{1}\otimes\bar{x}_k\right\|^2\right]+\left(1+\frac{1}{\tau}\right)\alpha_k^2C_f\mathbb{E}\left[\normm{\mathbf{z}_k}_F^2\right]\notag,
	\end{align}
	where the last inequality follows from the definition of $\mathbf{U}_{k+1}$ and Assumption \ref{ass-objective}(c). Setting $\tau=\frac{1-\rho^2}{2\rho^2}$, we obtain (\ref{xx-bound}).
	
	Next, we provide the upper bound of consensus error $\mathbb{E}\left[\left\|\mathbf{y}_{k+1}-\mathbf{1}\otimes\bar{y}_{k+1}\right\|^2\right]$. Similar to  the analysis of (\ref{x-bound}), it follows from the definition of $\bar{y}_k$ and iteration (\ref{alg:z-1}) that
	\begin{align}
	\left\|\mathbf{y}_{k+1}-\mathbf{1}\otimes\bar{y}_{k+1}\right\|^2
	\label{z-bound}&\le (1+\tau)\rho^2\left\|\mathbf{y}_k-\mathbf{1}\otimes\bar{y}_k\right\|^2+\left(1+\frac{1}{\tau}\right)\left\|\mathbf{G}_{k+1}-\mathbf{G}_k\right\|^2.
	\end{align}
	By iteration (\ref{alg:G-1}),
    \begin{align*}
	&\mathbf{G}_{k+1}-\mathbf{G}_k\\
	&=-\beta_k\mathbf{G}_k+\left(1-\beta_k\right)\left(G_{k+1,k+1}-G_{k+1,k}\right)+\beta_kG_{k+1,k+1}\\
	 &=-\beta_k\left(\mathbf{G}_k-\mathbf{g}_k\right)+\left(1-\beta_k\right)\left(G_{k+1,k+1}-G_{k+1,k}\right)+\beta_k\left(G_{k+1,k+1}-\mathbf{g}_{k+1}\right)+\beta_k\left(\mathbf{g}_{k+1}-\mathbf{g}_{k}\right).
	\end{align*}
	Then
	\begin{align*}
	&\left\|\mathbf{y}_{k+1}-\mathbf{1}\otimes\bar{y}_{k+1}\right\|^2\\
	&\le (1+\tau)\rho^2\left\|\mathbf{y}_k-\mathbf{1}\otimes\bar{y}_k\right\|^2+4\left(1+\frac{1}{\tau}\right)\left(\beta_k^2\left\|\mathbf{G}_k-\mathbf{g}_k\right\|^2\right.\notag\\
	&\quad\left.+\left(1-\beta_k\right)^2\left\|G_{k+1,k+1}-G_{k+1,k}\right\|^2+\beta_k^2\left\|G_{k+1,k+1}-\mathbf{g}_{k+1}\right\|^2+\beta_k^2\left\|\mathbf{g}_{k+1}-\mathbf{g}_{k}\right\|^2\right).
	\end{align*}
	Taking expectation on both sides of the inequality above,
	\begin{align}
	&\mathbb{E}\left[\left\|\mathbf{y}_{k+1}-\mathbf{1}\otimes\bar{y}_{k+1}\right\|^2\right]\\
	&\le (1+\tau)\rho^2\mathbb{E}\left[\left\|\mathbf{y}_k-\mathbf{1}\otimes\bar{y}_k\right\|^2\right]+4\left(1+\frac{1}{\tau}\right)\left(\beta_k^2\mathbb{E}\left[\left\|\mathbf{G}_k-\mathbf{g}_k\right\|^2\right]\right.\notag\\
	 &\quad\left.+\left(1-\beta_k\right)^2\mathbb{E}\left[\left\|G_{k+1,k+1}-G_{k+1,k}\right\|^2\right]+\beta_k^2\mathbb{E}\left[\left\|G_{k+1,k+1}-\mathbf{g}_k\right\|^2+\beta_k^2\left\|\mathbf{g}_{k+1}-\mathbf{g}_{k}\right\|^2\right]\right)\notag\\
	&\le (1+\tau)\rho^2\mathbb{E}\left[\left\|\mathbf{y}_k-\mathbf{1}\otimes\bar{y}_k\right\|^2\right]+4\left(1+\frac{1}{\tau}\right)\left(\beta_k^2\mathbb{E}\left[\left\|\mathbf{G}_k-\mathbf{g}_{k+1}\right\|^2\right]\right.\notag\\
	&\quad\left.+C_g\mathbb{E}\left[\left\|\mathbf{x}_{k+1}-\mathbf{x}_{k}\right\|^2\right]+\beta_k^2V_g\right)\notag\\
	&\le (1+\tau)\rho^2\mathbb{E}\left[\left\|\mathbf{y}_k-\mathbf{1}\otimes\bar{y}_k\right\|^2\right]+4\left(1+\frac{1}{\tau}\right)\left(\beta_k^2\mathbb{E}\left[\left\|\mathbf{G}_k-\mathbf{g}_k\right\|^2\right]\right.\notag\\
	\label{zz-bound}&\quad\left.+2C_g\left(4\mathbb{E}\left[ \left\|\mathbf{x}_k-\mathbf{1}\otimes \bar{x}_k\right\|^2\right]+\alpha_k^2C_f\mathbb{E}\left[\normm{\mathbf{z}_k}_F^2\right]\right)+\beta_k^2V_g\right),
	\end{align}
	where the second inequality follows from Assumption \ref{ass-objective}(c), (d) and the fact $\beta_k\ge1$, the last inequality follows from
	  (\ref{x-x-bound}). Setting $\tau=\frac{1-\rho^2}{2\rho^2}$, we obtain (\ref{zz-bound-1}).
	
 Inequality	(\ref{yy-bound}) could be obtained by the similar analysis of (\ref{zz-bound-1}). The proof is complete.
\end{proof}

\textbf{Proof of Lemma \ref{lem:complex}}
\begin{proof}
	Multiplying $c_1,\cdots, c_4$ on both sides of (\ref{zz-bound-1}), (\ref{yy-bound}), (\ref{GG-bound}) and (\ref{Gd-bound}) respectively, and then adding up them to (\ref{xx-bound}), we have
	{\small\begin{align*}
		&V_{k+1}\\
		&\le \frac{2+\rho^2}{3}\mathbb{E}\left[\left\|\mathbf{x}_k-\mathbf{1}\otimes\bar{x}_k\right\|^2\right]+\frac{1+\rho^2}{2}c_1\mathbb{E}\left[\left\|\mathbf{y}_k-\mathbf{1}\otimes\bar{y}_k\right\|^2\right]+\frac{1+\rho^2}{2}c_2\mathbb{E}\left[\normm{\mathbf{z}_k-\mathbf{1}\otimes\bar{z}_k}^2\right]\\
		&\quad +\left((1-\beta_k)^2+6\frac{1-\rho^2}{1+\rho^2}\beta_k^2\right)c_3\mathbb{E}\left[\|\mathbf{G}_k-\mathbf{g}_k\|^2\right]+\left((1-\gamma_k)^2+6pL_g\frac{1-\rho^2}{1+\rho^2}\gamma_k^2\right)c_4\mathbb{E}\left[\normm{\hat{\mathbf{G}}_k-\nabla\mathbf{g}_k}_F^2\right]\\
		&\quad+ c_5\alpha_k^2\mathbb{E}\left[\normm{\mathbf{z}_k}_F^2\right]+3\left(\frac{1+\rho^2}{1-\rho^2}V_gc_1+V_gc_3\right)\beta_k^2+3\left(\frac{1+\rho^2}{1-\rho^2}2V_g^{'}c_2+V_g^{'}c_4\right)\gamma_k^2.
		\end{align*}}
Note that
	\begin{align*}
	&c_5\alpha_k^2\mathbb{E}\left[\normm{\mathbf{z}_k}_F^2\right]\notag\\
	 &=c_5\alpha_k^2\mathbb{E}\left[\normm{\mathbf{z}_k-\mathbf{1}\otimes\bar{z}_k+\mathbf{1}\otimes\bar{z}_k-\mathbf{1}\otimes\left(\frac{1}{n}\sum_{j=1}^n\nabla g_j(x_{j,k})\right)+\mathbf{1}\otimes\left(\frac{1}{n}\sum_{j=1}^n\nabla g_j(x_{j,k})\right)}_F^2\right]\notag\\
	&\le 3c_5\alpha_k^2\left(\mathbb{E}\left[\normm{\mathbf{z}_k-\mathbf{1}\otimes\bar{z}_k}_F^2\right]+n\mathbb{E}\left[\normm{\bar{z}_k-\frac{1}{n}\sum_{j=1}^n\nabla g_j(x_{j,k})}_F^2\right]+\sum_{j=1}^n\mathbb{E}\left[\normm{\nabla g_j(x_{j,k})}_F^2\right]\right)\notag\\
	&\le 3c_5\alpha_k^2\left(\mathbb{E}\left[\normm{\mathbf{z}_k-\mathbf{1}\otimes\bar{z}_k}_F^2\right]+\mathbb{E}\left[\normm{\hat{\mathbf{G}}_k-\nabla\mathbf{g}_k}_F^2\right]+npC_g\right)\notag\\
	 &\le 3pc_5\alpha_k^2\left(\mathbb{E}\left[\normm{\mathbf{z}_k-\mathbf{1}\otimes\bar{z}_k}^2\right]+\mathbb{E}\left[\normm{\hat{\mathbf{G}}_k-\nabla\mathbf{g}_k}^2\right]+nC_g\right).
	\end{align*}
	Then
	{\small\begin{align*}
		V_{k+1}&\le \frac{2+\rho^2}{3}\mathbb{E}\left[\left\|\mathbf{x}_k-\mathbf{1}\otimes\bar{x}_k\right\|^2\right]+\frac{1+\rho^2}{2}c_1\mathbb{E}\left[\left\|\mathbf{y}_k-\mathbf{1}\otimes\bar{y}_k\right\|^2\right]\\
		&\quad +\left(\frac{1+\rho^2}{2}+\frac{3pc_5}{c_2}\alpha_k^2\right)c_2\mathbb{E}\left[\normm{\mathbf{z}_k-\mathbf{1}\otimes\bar{z}_k}^2\right]+\left((1-\beta_k)^2+6\frac{1-\rho^2}{1+\rho^2}\beta_k^2\right)c_3\mathbb{E}\left[\|\mathbf{G}_k-\mathbf{g}_k\|^2\right]\\
		 &\quad+\left((1-\gamma_k)^2+6pL_g\frac{1-\rho^2}{1+\rho^2}\gamma_k^2+\frac{3pc_5}{c_4}\alpha_k^2\right)c_4\mathbb{E}\left[\normm{\hat{\mathbf{G}}_k-\nabla\mathbf{g}_k}_F^2\right]\\
		&\quad+ 3npc_5C_g\alpha_k^2+3\left(\frac{1+\rho^2}{1-\rho^2}V_gc_1+V_gc_3\right)\beta_k^2+3\left(\frac{1+\rho^2}{1-\rho^2}2V_g^{'}c_2+V_g^{'}c_4\right)\gamma_k^2\\
		&\le a_k V_k+b_k.
		\end{align*}}
	The proof is complete.	
\end{proof}

\textbf{Proof of Lemma \ref{lem:c-tracking}}
\begin{proof}
	The proof is similar to Lemma \ref{lem:tracking}.
\end{proof}
\textbf{Proof of Lemma \ref{lem:c-consensus}}
\begin{proof}
	The proof is similar to Lemma \ref{lem:consensus}.
\end{proof}

\textbf{Proof of Lemma \ref{lem:compression}}
\begin{proof}
	We first provide the upper bound of compression error $\mathbb{E}\left[\left\|\mathbf{x}_{k+1}-\mathbf{H}_{k+1}^x\right\|^2\right]$. By the Line \ref{c-4} of Algorithm \ref{COMMA},
	\begin{equation*}
	\mathbf{H}_{k+1}^x=\mathbf{H}_{k}^x+\alpha_x\mathcal{C}\left(\mathbf{x}_k-\mathbf{H}_k^x\right),
	\end{equation*}
	and then
	\begin{align}
	 &\mathbb{E}\left[\left\|\mathbf{x}_{k+1}-\mathbf{H}_{k+1}^x\right\|^2\right]\notag\\&=\mathbb{E}\left[\left\|\mathbf{x}_{k+1}-\mathbf{x}_{k}+\mathbf{x}_{k}-\left(\mathbf{H}_{k}^x+\alpha_x\mathcal{C}\left(\mathbf{x}_k-\mathbf{H}_k^x\right)\right)\right\|^2\right]\notag\\
	 &\le(1+\tau)\mathbb{E}\left[\left\|\mathbf{x}_{k}-\left(\mathbf{H}_{k}^x+\alpha_x\mathcal{C}\left(\mathbf{x}_k-\mathbf{H}_k^x\right)\right)\right\|^2\right]+\left(1+\frac{1}{\tau}\right)\mathbb{E}\left[\left\|\mathbf{x}_{k+1}-\mathbf{x}_{k}\right\|^2\right]\notag\\
	 &=(1+\tau)\mathbb{E}\left[\left\|\alpha_xr_1\left(\mathbf{x}_{k}-\mathbf{H}_{k}^x-\frac{\mathcal{C}\left(\mathbf{x}_k-\mathbf{H}_k^x\right)}{r_1}\right)+(1-\alpha_xr_1)\left(\mathbf{x}_{k}-\mathbf{H}_{k}^x\right)\right\|^2\right]\notag\\
	&\quad+\left(1+\frac{1}{\tau}\right)\mathbb{E}\left[\left\|\mathbf{x}_{k+1}-\mathbf{x}_{k}\right\|^2\right]\notag\\
	 &\le(1+\tau)\mathbb{E}\left[\alpha_xr_1\left\|\mathbf{x}_{k}-\mathbf{H}_{k}^x-\frac{\mathcal{C}\left(\mathbf{x}_k-\mathbf{H}_k^x\right)}{r_1}\right\|^2+(1-\alpha_xr_1)\left\|\mathbf{x}_{k}-\mathbf{H}_{k}^x\right\|^2\right]\notag\\
	&\quad+\left(1+\frac{1}{\tau}\right)\mathbb{E}\left[\left\|\mathbf{x}_{k+1}-\mathbf{x}_{k}\right\|^2\right]\notag\\
	 \label{comp-x-1}&\le(1+\tau)\mathbb{E}\left[(1-\alpha_xr_1\psi_1)\left\|\mathbf{x}_{k}-\mathbf{H}_{k}^x\right\|^2\right]+\left(1+\frac{1}{\tau}\right)\mathbb{E}\left[\left\|\mathbf{x}_{k+1}-\mathbf{x}_{k}\right\|^2\right],
	\end{align}
	where the first inequality follows from that $(a+b)^2\le (1+\tau)a^2+(1+1/\tau)b^2$ for any $\tau>0$, the second inequality follows from the convexity of $\|\cdot\|^2$ and the last inequality follows from Assumption  \ref{ass:compressor}. For the term $\mathbb{E}\left[\left\|\mathbf{x}_{k+1}-\mathbf{x}_{k}\right\|^2\right]$ on the right hand side of above inequality,
	\begin{align}
	&\mathbb{E}\left[\|\mathbf{x}_{k+1}-\mathbf{x}_k\|^2\right]\notag\\
	&=\mathbb{E}\left[\left\|\alpha_w\left[\tilde{\mathbf{W}}-\mathbf{I}_{nd}\right](\mathbf{x}_k-\mathbf{1}\otimes \bar{x}_k)-\alpha_k\mathbf{U}_{k+1}+\alpha_w\left[\mathbf{I}_{nd}-\tilde{\mathbf{W}}\right]\mathbf{E}_{k+1}^x\right\|^2\right]\notag\\
	&\le 3\left(4\alpha_w^2\mathbb{E}\left[ \left\|\mathbf{x}_k-\mathbf{1}\otimes \bar{x}_k\right\|^2\right]+\alpha_k^2\mathbb{E}\left[\left\|\mathbf{U}_{k+1}\right\|^2\right]+4\alpha_w^2\mathbb{E}\left[\left\|\mathbf{E}_{k+1}^y\right\|^2\right]\right)\notag\\
	\label{x-x}&\le 3\left(4\alpha_w^2\mathbb{E}\left[ \left\|\mathbf{x}_k-\mathbf{1}\otimes \bar{x}_k\right\|^2\right]+\alpha_k^2C_f\mathbb{E}\left[\normm{\mathbf{z}_k}_F^2\right]+4\alpha_w^2\breve{r}_1\mathbb{E}\left[\left\|\mathbf{x}_k-\mathbf{H}_k^x\right\|^2\right]\right),
	\end{align}
	where the first equality follows from (\ref{alg:c-x}) and the row stochasticity of $\mathbf{W}$, the first inequality follows from the fact $\normm{\mathbf{W}-\mathbf{I}_{n}}\le 2$, the second inequality follows from  Assumption \ref{ass-objective}(c), the fact $\mathbf{E}_{k+1}^x=\mathbf{x}_k-\mathbf{H}_k^x-\mathcal{C}\left(\mathbf{x}_k-\mathbf{H}_k^x\right)$ and (\ref{c1}).
	Substituting the above inequality into (\ref{comp-x-1}) and setting $\tau=\alpha_xr_1\psi_1$,
	\begin{align*}
	\mathbb{E}\left[\left\|\mathbf{x}_{k+1}-\mathbf{H}_{k+1}^x\right\|^2\right]
	 &\le\left(1-(\alpha_xr_1\psi_1)^2+12\frac{1+\alpha_xr_1\psi_1}{\alpha_xr_1\psi_1}\alpha_w^2\breve{r}_1\right)\mathbb{E}\left[\left\|\mathbf{x}_{k}-\mathbf{H}_{k}^x\right\|^2\right]\notag\\
	&\quad+\frac{1+\alpha_xr_1\psi_1}{\alpha_xr_1\psi_1}\left(12\alpha_w^2\mathbb{E}\left[ \left\|\mathbf{x}_k-\mathbf{1}\otimes \bar{x}_k\right\|^2\right]+3\alpha_k^2C_f\mathbb{E}\left[\normm{\mathbf{z}_k}_F^2\right]\right).
	\end{align*}
	Noting that $\alpha_w^2\le\frac{(\alpha_xr_1\psi_1)^3}{24\breve{r}_1\left(1+\alpha_xr_1\psi_1\right)}$, (\ref{comp-x-0}) holds.

	Next, we provide the upper bound of compression error $\mathbb{E}\left[\left\|\mathbf{y}_{k+1}-\mathbf{H}_{k+1}^y\right\|^2\right]$.
	By the Line \ref{c-4} of Algorithm \ref{COMMA},
	\begin{equation*}
	\mathbf{H}_{k+1}^y=\mathbf{H}_{k}^y+\alpha_y\mathcal{C}\left(\mathbf{y}_k-\mathbf{H}_k^y\right),
	\end{equation*}
	and then
	\begin{align}
	 \mathbb{E}\left[\left\|\mathbf{y}_{k+1}-\mathbf{H}_{k+1}^y\right\|^2\right]&=\mathbb{E}\left[\left\|\mathbf{y}_{k+1}-\mathbf{y}_{k}+\mathbf{y}_{k}-\left(\mathbf{H}_{k}^y+\alpha_y\mathcal{C}\left(\mathbf{y}_k-\mathbf{H}_k^y\right)\right)\right\|^2\right]\notag\\
	 &\le(1+\tau)\mathbb{E}\left[\left\|\mathbf{y}_{k}-\left(\mathbf{H}_{k}^y+\alpha_y\mathcal{C}\left(\mathbf{y}_k-\mathbf{H}_k^y\right)\right)\right\|^2\right]+\left(1+\frac{1}{\tau}\right)\mathbb{E}\left[\left\|\mathbf{y}_{k+1}-\mathbf{y}_{k}\right\|^2\right]\notag\\
	 &=(1+\tau)\mathbb{E}\left[\left\|\alpha_yr_1\left(\mathbf{y}_{k}-\mathbf{H}_{k}^y-\frac{\mathcal{C}\left(\mathbf{y}_k-\mathbf{H}_k^y\right)}{r_1}\right)+(1-\alpha_yr_1)\left(\mathbf{y}_{k}-\mathbf{H}_{k}^y\right)\right\|^2\right]\notag\\
	&\quad+\left(1+\frac{1}{\tau}\right)\mathbb{E}\left[\left\|\mathbf{y}_{k+1}-\mathbf{y}_{k}\right\|^2\right]\notag\\
	 &\le(1+\tau)\mathbb{E}\left[\alpha_yr_1\left\|\mathbf{y}_{k}-\mathbf{H}_{k}^y-\frac{\mathcal{C}\left(\mathbf{y}_k-\mathbf{H}_k^y\right)}{r_1}\right\|^2+(1-\alpha_yr_1)\left\|\mathbf{y}_{k}-\mathbf{H}_{k}^y\right\|^2\right]\notag\\
	&\quad+\left(1+\frac{1}{\tau}\right)\mathbb{E}\left[\left\|\mathbf{y}_{k+1}-\mathbf{y}_{k}\right\|^2\right]\notag\\
	 \label{comp-z-1}&\le(1+\tau)\mathbb{E}\left[(1-\alpha_yr_1\psi_1)\left\|\mathbf{y}_{k}-\mathbf{H}_{k}^y\right\|^2\right]+\left(1+\frac{1}{\tau}\right)\mathbb{E}\left[\left\|\mathbf{y}_{k+1}-\mathbf{y}_{k}\right\|^2\right],
	\end{align}
	where the first inequality follows from that $(a+b)^2\le (1+\tau)a^2+(1+1/\tau)b^2$ for any $\tau>0$, the second inequality follows from the convexity of $\|\cdot\|^2$ and the last inequality follows from Assumption  \ref{ass:compressor}. Similar to (\ref{x-x}),
	{\small\begin{align}
		\mathbb{E}\left[\|\mathbf{y}_{k+1}-\mathbf{y}_k\|^2\right]
		&\le 3\left(4\alpha_w^2\mathbb{E}\left[ \left\|\mathbf{y}_k-\mathbf{1}\otimes \bar{y}_k\right\|^2\right]+4\alpha_w^2\breve{r}_2\mathbb{E}\left[\left\|\mathbf{y}_k-\mathbf{H}_k^y\right\|^2\right]\right)\notag\\
		\label{z-z-bound}&\quad+9C_g\left(4\alpha_w^2\mathbb{E}\left[ \left\|\mathbf{x}_k-\mathbf{1}\otimes \bar{x}_k\right\|^2\right]+\alpha_k^2C_f\mathbb{E}\left[\normm{\mathbf{z}_k}_F^2\right]+4\alpha_w^2\breve{r}_1\mathbb{E}\left[\left\|\mathbf{x}_k-\mathbf{H}_k^x\right\|^2\right]\right),
		\end{align}}
	Substituting (\ref{z-z-bound}) into (\ref{comp-z-1}) and setting $\tau=\alpha_yr_2\psi_2$,
	\begin{align*}
	\mathbb{E}\left[\left\|\mathbf{y}_{k+1}-\mathbf{H}_{k+1}^y\right\|^2\right]
	&\le \left(1-(\alpha_yr_2\psi_2)^2+12\frac{1+\alpha_xr_1\psi_1}{\alpha_xr_1\psi_1}\alpha_w^2\breve{r}_2\right)\mathbb{E}\left[\left\|\mathbf{y}_{k}-\mathbf{H}_{k}^y\right\|^2\right]\notag\\
	&\quad+9C_g\frac{1+\alpha_yr_2\psi_2}{\alpha_yr_2\psi_2}\left(4\alpha_w^2\mathbb{E}\left[ \left\|\mathbf{x}_k-\mathbf{1}\otimes \bar{x}_k\right\|^2\right]+4\alpha_w^2\breve{r}_1\mathbb{E}\left[\left\|\mathbf{x}_k-\mathbf{H}_k^x\right\|^2\right]\right)\notag\\
	&\quad +12\frac{1+\alpha_yr_2\psi_2}{\alpha_yr_2\psi_2}\alpha_w^2\mathbb{E}\left[ \left\|\mathbf{y}_k-\mathbf{1}\otimes \bar{y}_k\right\|^2\right]+9C_g\frac{1+\alpha_yr_2\psi_2}{\alpha_yr_2\psi_2}\alpha_k^2C_f\mathbb{E}\left[\normm{\mathbf{z}_k}_F^2\right].
	\end{align*}
	Noting that $\alpha_w^2\le\frac{(\alpha_yr_2\psi_2)^3}{24\breve{r}_2\left(1+\alpha_yr_2\psi_2\right)}$,  (\ref{comp-z-0}) holds.
	
Inequality	(\ref{comp-y-0}) could be obtained by the similar analysis of (\ref{comp-z-0}).
\end{proof}
\textbf{Proof of Lemma \ref{lem:c-complex}}
\begin{proof}
	The proof is similar to Lemma \ref{lem:complex}.
\end{proof}

\newpage
\section{}\label{CD-NSCGD}\label{apd:CD-AS-agent}
Here we present CD-ASCGD method from each agent’s perspective.
\begin{algorithm}[H]
	\caption{D-ASCGD method from agents' view:}\label{alg:CSIA-e}
	\begin{algorithmic}[1]
		\REQUIRE  initial values $x_{i,1},H_{i,1}^x\in\mathbb{R}^{d}$,  $H_{i,1}^y\in\mathbb{R}^{p}$, $H_{i,1}^{z}\in\mathbb{R}^{d\times p}$, $y_{i,1}=G_{i,1}\in\mathbb{R}^{p}$, $z_{i,1}=\hat{G}_{i,1}\in\mathbb{R}^{d\times p}$; stepsizes $\alpha_k>0$, $\beta_k>0,\gamma_k>0$; scaling parameters $\alpha_w\in(0,)$; nonnegative weight matrix $\mathbf{W}=\{w_{ij}\}_{1\le i,j\le n}\in \mathbb{R}^{n\times n}$    
		\STATE $H_{i,1}^{x,w}=\sum_{j=1}^n w_{ij} H_{j,1}^x$,~\small$H_{i,1}^{y,w}=\sum_{j=1}^n w_{ij} H_{j,1}^y$,~\small$H_{i,1}^{z,w}=\sum_{j=1}^n w_{ij} H_{j,1}^{z}$
		\FOR {$k=1,2,\cdots$}
		\STATE $q_{i,k}^x=\mathbf{Compress}\left(x_{i,k}-H_{i,k}^x\right)$\quad \quad \quad \quad \quad \quad \quad \quad $\triangleright$ Compression
		\STATE $\breve{x}_{i,k}=H_{i,k}^x+q_{i,k}^x$
		\STATE $\breve{x}_{i,k}^w=H_{i,k}^{x,w}+\sum_{j=1}^n w_{ij}q_{j,k}^x$\quad \quad \quad \quad \quad \quad \quad \quad \quad \quad \quad \quad $\triangleright$ Communication
		\STATE $H_{i,k+1}^x=(1-\alpha_x)H_{i,k+1}^x+\alpha_x \breve{x}_{i,k}$
		\STATE $H_{i,k+1}^{x,w}=(1-\alpha_x)H_{i,k+1}^{x,w}+\alpha_x \breve{x}_{i,k}^w$
		\STATE $q_{i,k}^y=\mathbf{Compress}\left(y_{i,k}-H_{i,k}^y\right)$\quad \quad \quad \quad \quad \quad \quad \quad $\triangleright$ Compression
		\STATE $\breve{y}_{i,k}=H_{i,k}^y+q_{i,k}^y$
		\STATE $\breve{y}_{i,k}^w=H_{i,k}^{y,w}+\sum_{j=1}^n w_{ij}q_{j,k}^y$\quad \quad \quad \quad \quad \quad \quad \quad \quad \quad \quad \quad $\triangleright$ Communication
		\STATE $H_{i,k+1}^y=(1-\alpha_y)H_{i,k+1}^y+\alpha_y \breve{y}_{i,k}$
		\STATE $H_{i,k+1}^{y,w}=(1-\alpha_y)H_{i,k+1}^{y,w}+\alpha_y \breve{y}_{i,k}^w$
		\STATE $q_{i,k}^{z}=\mathbf{Compress}\left(z_{i,k}-H_{i,k}^{z}\right)$\quad \quad \quad \quad \quad \quad \quad \quad $\triangleright$ Compression
		\STATE $\breve{z}_{i,k}=H_{i,k}^{z}+q_{i,k}^{z}$
		\STATE $\breve{z}_{i,k}^w=H_{i,k}^{z,w}+\sum_{j=1}^n w_{ij}q_{j,k}^{z}$\quad \quad \quad \quad \quad \quad \quad \quad \quad \quad \quad \quad $\triangleright$ Communication
		\STATE $H_{i,k+1}^{z}=(1-\alpha_z)H_{i,k+1}^{z}+\alpha_z \breve{z}_{i,k}$
		\STATE $H_{i,k+1}^{z,w}=(1-\alpha_z)H_{i,k+1}^{z,w}+\alpha_z \breve{z}_{i,k}^w$
		\STATE Draw $\phi_{i,k+1}\stackrel{iid}\sim P_{\phi_i},~\zeta_{i,k+1}\stackrel{iid}\sim P_{\zeta_i}$, and compute function values	 $G_i(x_{i,k};\phi_{i,k+1})$, and gradients $\nabla F_i(y_{i,k};\zeta_{i,k+1})$, $\nabla G_i(x_{i,k};\phi_{i,k+1})$ and
			$\nabla G_i(x_{i,k+1};\phi_{i,k+1})$
		\STATE $x_{i,k+1}=x_{i,k}-\alpha_w\left(\breve{x}_{i,k}-\breve{x}_{i,k}^w\right)-\alpha_kz_{i,k}\nabla F_i(y_{i,k};\zeta_{i,k+1})$
		\STATE $G_{i,k+1}=(1-\beta_k)\left(G_{i,k}-G_i(x_{i,k};\phi_{i,k+1})\right)+ G_i(x_{i,k+1};\phi_{i,k+1})$
		\STATE $\hat{G}_{i,k+1}=(1-\gamma_k)\left(\hat{G}_{i,k}-\nabla G_i(x_{i,k};\phi_{i,k+1})\right)+\nabla G_i(x_{i,k+1};\phi_{i,k+1})$
		\STATE $y_{i,k+1}=y_{i,k}-\alpha_w\left(\breve{y}_{i,k}-\breve{y}_{i,k}^w\right)+G_{i,k+1}-G_{i,k}$
		\STATE $z_{i,k+1}=z_{i,k}-\alpha_w\left(\breve{z}_{i,k}-\breve{z}_{i,k}^w\right)+\hat{G}_{i,k+1}-\hat{G}_{i,k}$
		\ENDFOR
	\end{algorithmic}
\end{algorithm}

\end{appendices}

\end{document}